\DeclareFontFamily{T1}{pzc}{}
\DeclareFontShape{T1}{pzc}{m}{it}{1.8 <-> pzcmi8t}{}
\DeclareMathAlphabet{\mathpzc}{T1}{pzc}{m}{it}
\theoremstyle{plain}
\newtheorem{prop}{Proposition}[section]
\newtheorem{cor}[prop]{Corollary}
\newtheorem{thm}[prop]{Theorem}
\newtheorem{theorem}[prop]{Theorem}
\newtheorem{lemma}[prop]{Lemma}
\newtheorem{corollary}[prop]{Corollary}
\theoremstyle{definition}
\newtheorem{defn}[prop]{Definition}
\newtheorem{empt}[prop]{}
\theoremstyle{definition}
\newtheorem{definition}[prop]{Definition}
\newtheorem{example}[prop]{Example}
\newtheorem{remark}[prop]{Remark}
\numberwithin{equation}{section}
\newcommand{\vertiii}[1]{{\left\vert\kern-0.25ex\left\vert\kern-0.25ex\left\vert #1
    \right\vert\kern-0.25ex\right\vert\kern-0.25ex\right\vert}}
\newcommand{\Coo}{C^\infty}                  
\newbox\ncintdbox \newbox\ncinttbox 
\newcommand{\Id}{\mathrm{Id}}                
\newcommand{\C}{\mathbb{C}}                  
\renewcommand{\H}{\mathcal{H}}               
\newcommand{\hookto}{\hookrightarrow}        
\newcommand{\N}{\mathbb{N}}                  
\newcommand{\eps}{\varepsilon}                    
\newcommand{\R}{\mathbb{R}}                  
\renewcommand{\SS}{\mathcal{S}}              
\DeclareMathOperator{\supp}{\mathfrak{supp}}         
\newcommand{\T}{\mathbb{T}}                  
\renewcommand{\th}{\theta}                   
\newcommand{\Z}{\mathbb{Z}}                  
\renewcommand{\th}{\theta}        
\def\<#1|#2>{\langle#1\stroke#2\rangle} 
\def\?#1|#2?{\{#1\stroke#2\}}        
\def\<#1,#2>{\langle#1,#2\rangle}            
\def\ee_#1{e_{{\scriptscriptstyle#1}}}       
\def\wick:#1:{\mathopen:#1\mathclose:}       
\newbox\ncintdbox \newbox\ncinttbox 
\newcommand{\stroke}{\mathbin|}   
\title{Coverings of foliation algebras}
\begin{document}
\maketitle  \setlength{\parindent}{0pt}
\begin{center}
\author{
{\textbf{Petr R. Ivankov*}\\
e-mail: * monster.ivankov@gmail.com \\
}
}
\end{center}

\vspace{1 in}

\begin{abstract}
\noindent

\paragraph{}

This article is devoted to the geometric construction which states a natural correspondence between topological  coverings of a foliated manifolds and noncommutative coverings of the operator algebras. However this correspondence is not one to one because there are noncommutaive coverings of foliations which do not comply with discussed in this article construction.
\end{abstract}
\section{Motivation. Preliminaries}
\paragraph*{}
Gelfand-Na\u{\i}mark theorem \cite{arveson:c_alg_invt} states the correspondence between  locally compact Hausdorff topological spaces and commutative $C^*$-algebras.

\begin{theorem}\label{gelfand-naimark}\cite{arveson:c_alg_invt} (Gelfand-Na\u{\i}mark). 
	Let $A$ be a commutative $C^*$-algebra and let $\mathcal{X}$ be the spectrum of A. There is the natural $*$-isomorphism $\gamma:A \xrightarrow{\approx} C_0(\mathcal{X})$.
\end{theorem}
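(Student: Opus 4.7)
The plan is to construct the isomorphism as the Gelfand transform $\gamma \colon A \to C_0(\mathcal{X})$ defined by $\gamma(a)(\chi) = \chi(a)$, where the elements of the spectrum $\mathcal{X}$ are interpreted as the non-zero multiplicative linear functionals $\chi \colon A \to \C$. The first preparatory step is to equip $\mathcal{X}$ with the weak-$*$ topology inherited from $A^*$ and verify it is locally compact Hausdorff: by Banach--Alaoglu, together with the fact that $\mathcal{X} \cup \{0\}$ is weak-$*$ closed inside the unit ball of $A^*$, the set $\mathcal{X} \cup \{0\}$ is compact, and removing the zero functional leaves $\mathcal{X}$ locally compact.

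Next I would check the algebraic and analytic properties of $\gamma$. Continuity of $\gamma(a)$ on $\mathcal{X}$ is built into the definition of the weak-$*$ topology, and the fact that $\gamma(a)$ vanishes at infinity is equivalent to the continuity of the extension $\mathcal{X}\cup\{0\} \to \C$ sending $0$ to $0$. Multiplicativity and additivity of $\gamma$ are immediate from the fact that each $\chi \in \mathcal{X}$ is a character, while $*$-compatibility $\chi(a^*) = \overline{\chi(a)}$ is the essential $C^*$-algebraic input and follows from the observation that self-adjoint elements of a $C^*$-algebra have real spectrum combined with the inclusion $\chi(a) \in \sigma(a)$.

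To finish, I would establish isometry and surjectivity. For self-adjoint $a$ the $C^*$-identity and the spectral radius formula give $\|a\| = r(a) = \sup_{\chi \in \mathcal{X}}|\chi(a)| = \|\gamma(a)\|_\infty$, and the general case reduces via $\|a\|^2 = \|a^*a\|$, so $\gamma$ is isometric and in particular injective with closed image. The image is a $*$-subalgebra of $C_0(\mathcal{X})$ that separates points and vanishes at no point, hence by the locally compact version of the Stone--Weierstrass theorem it is dense; combined with closedness this forces $\gamma$ to be surjective. The main obstacle I anticipate is keeping the non-unital case under control, in particular correctly identifying $\sigma(a)$ with $\{\chi(a) : \chi \in \mathcal{X}\} \cup \{0\}$ and applying Stone--Weierstrass on a locally compact rather than compact space; the cleanest way around this is to pass to the unitization $\widetilde{A}$, apply the unital form of the result to identify $C(\mathcal{X}\cup\{0\}) \cong \widetilde{A}$, and then restrict to the maximal ideal corresponding to evaluation at the added point.
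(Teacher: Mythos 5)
The paper does not prove this statement at all: Theorem \ref{gelfand-naimark} is quoted from Arveson's book as a known preliminary, so there is no internal proof to compare against. Your proposal is the standard and correct argument via the Gelfand transform: the identification of $\mathcal{X}\cup\{0\}$ as a weak-$*$ compact set, the $*$-compatibility from the reality of the spectrum of self-adjoint elements, the isometry from $\|a\|=r(a)$ on self-adjoint elements together with the $C^*$-identity, and density of the image by the locally compact Stone--Weierstrass theorem; your handling of the non-unital case by passing to the unitization and identifying $\sigma(a)$ with $\{\chi(a):\chi\in\mathcal{X}\}\cup\{0\}$ is exactly the right way to close the one genuine subtlety.
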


So any (noncommutative) $C^*$-algebra may be regarded as a generalized (noncommutative)  locally compact Hausdorff topological space. Following theorem yields a pure algebraic description of finite-fold coverings of compact spaces.
\begin{theorem}\label{pavlov_troisky_thm}\cite{pavlov_troisky:cov}
	Suppose $\mathcal X$ and $\mathcal Y$ are compact Hausdorff connected spaces and $p :\mathcal  Y \to \mathcal X$
	is a continuous surjection. If $C(\mathcal Y )$ is a projective finitely generated Hilbert module over
	$C(\mathcal X)$ with respect to the action
	\begin{equation*}
	(f\xi)(y) = f(y)\xi(p(y)), ~ f \in  C(\mathcal Y ), ~ \xi \in  C(\mathcal X),
	\end{equation*}
	then $p$ is a finite-fold  covering.
\end{theorem}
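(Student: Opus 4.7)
The plan is to use Serre--Swan to convert the hypothesis into vector bundle data, exploit the commutative $C(\mathcal{X})$-algebra structure of $C(\mathcal{Y})$ to pin down the fibres of that bundle, and only then extract the local triviality required of a covering.

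Since $C(\mathcal{Y})$ is finitely generated projective over $C(\mathcal{X})$, Serre--Swan gives a complex vector bundle $E\to\mathcal{X}$ with $C(\mathcal{Y})\cong\Gamma(\mathcal{X},E)$; connectedness of $\mathcal{X}$ makes its rank a constant $n$. For $x\in\mathcal{X}$, writing $\mathfrak{m}_{x}\subset C(\mathcal{X})$ for the maximal ideal at $x$ and $\cdot$ for the module action via $p^{*}$, the fibre $E_{x}\cong C(\mathcal{Y})/(\mathfrak{m}_{x}\!\cdot\!C(\mathcal{Y}))$ is $n$-dimensional. Because $p^{*}(C(\mathcal{X}))$ is central in $C(\mathcal{Y})$, the multiplication and involution descend to $E_{x}$, making it a commutative $n$-dimensional $C^{*}$-algebra, hence $E_{x}\cong\mathbb{C}^{n}$. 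A character $\chi_{y}\colon C(\mathcal{Y})\to\mathbb{C}$, $f\mapsto f(y)$, factors through $E_{x}$ iff $\xi(p(y))=0$ for every $\xi\in\mathfrak{m}_{x}$, iff $p(y)=x$; so the $n$ characters of $\mathbb{C}^{n}$ are in bijection with $p^{-1}(x)$, and $|p^{-1}(x)|=n$ for every $x$.

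To produce a local trivialisation around a fixed $x_{0}\in\mathcal{X}$ with $p^{-1}(x_{0})=\{y_{1},\dots,y_{n}\}$, choose open $W_{i}\ni y_{i}$ with pairwise disjoint closures, and shrink to the open $U_{0}:=\mathcal{X}\setminus p\bigl(\mathcal{Y}\setminus\bigsqcup_{i}W_{i}\bigr)$, so that $p^{-1}(U_{0})\subset\bigsqcup_{i}W_{i}$. Lift the primitive idempotent of $E_{x_{0}}$ at $y_{i}$ to $\tilde e_{i}\in C(\mathcal{Y})$; since $\tilde e_{i}^{2}-\tilde e_{i}\in\mathfrak{m}_{x_{0}}\!\cdot\!C(\mathcal{Y})$ vanishes on $p^{-1}(x_{0})$, it has small norm on $p^{-1}(U')$ for some open $U'\ni x_{0}$ inside $U_{0}$, and holomorphic functional calculus corrects $\tilde e_{i}|_{p^{-1}(U')}$ to a genuine idempotent $\hat e_{i}\in C(p^{-1}(U'))$, that is, the characteristic function of a clopen $S_{i}\subset p^{-1}(U')$ containing $y_{i}$. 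Arranging $\{\hat e_{i}\}$ to be mutually orthogonal with sum $1$, the identity $|p^{-1}(x)|=n$ forces $|S_{i}\cap p^{-1}(x)|=1$ for every $x\in U'$; a net-compactness argument using the disjoint $\overline{W_{i}}$ and the closedness of $p$ (which holds because $\mathcal{Y}$ is compact and $\mathcal{X}$ Hausdorff) shows each $p|_{S_{i}}\colon S_{i}\to U'$ is a homeomorphism, so $U'$ is evenly covered.

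The principal obstacle is precisely this last step. The identification $|p^{-1}(x)|=n$ is algebraic and essentially automatic once Serre--Swan is in hand; converting it into a uniform local trivialisation requires lifting the primitive idempotents of $E_{x_{0}}$ to $C(\mathcal{Y})$, correcting them to true idempotents over a neighbourhood via functional calculus, verifying that the corrected family remains complete and mutually orthogonal on every nearby fibre, and then using the closedness of $p$ to promote the resulting clopen partition of $p^{-1}(U')$ into the sheets of a covering.
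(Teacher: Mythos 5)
The paper itself quotes this theorem from Pavlov--Troitsky without reproducing a proof, so your argument has to stand on its own. Up to the fibre count it does: Serre--Swan gives a rank-$n$ bundle $E$ with $\Gamma(\mathcal X,E)\cong C(\mathcal Y)$; the submodule $\mathfrak m_x\, C(\mathcal Y)$ is the kernel of evaluation into the finite-dimensional fibre, hence a closed self-adjoint ideal of $C(\mathcal Y)$, hence equal to $\left\{f\ :\ f|_{p^{-1}(x)}=0\right\}$; and $E_x\cong C\left(p^{-1}(x)\right)\cong\C^n$ indeed yields $\left|p^{-1}(x)\right|=n$ for every $x$. The construction of the open set $U_0$ and of the clopen pieces $S_i\subset p^{-1}(U')$ is also fine.

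The gap is the sentence claiming that ``the identity $|p^{-1}(x)|=n$ forces $|S_i\cap p^{-1}(x)|=1$ for every $x\in U'$''. A partition of $p^{-1}(U')$ into $n$ clopen sets together with $|p^{-1}(x)|=n$ does not force each piece to meet each fibre exactly once: one $S_i$ may meet a fibre twice while another misses it entirely. Concretely, take $\mathcal X=[0,1]$, $\mathcal Y=[-1,1]\sqcup\{*\}$, $p(t)=|t|$, $p(*)=0$. Every fibre has exactly two points, each $\mathfrak m_x\,C(\mathcal Y)$ is the ideal of functions vanishing on $p^{-1}(x)$, every quotient is $\C^2$, and your idempotent-lifting construction at $x_0=0$ produces the clopen pieces $S_1=(-\delta,\delta)$ and $S_2=\{*\}$, for which $\left|S_1\cap p^{-1}(x)\right|=2$ and $\left|S_2\cap p^{-1}(x)\right|=0$ whenever $0<x<\delta$; yet $p$ is not a covering. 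Of course $C(\mathcal Y)$ fails to be projective here (the summand $C(\{*\})$ is a skyscraper module), but your local argument never detects this: after establishing $|p^{-1}(x)|=n$ you use only that cardinality, which the counterexample also satisfies. To close the gap you must invoke projectivity a second time at the local stage --- for instance, multiplication by $\hat e_i$ is an idempotent endomorphism of the bundle $E|_{U'}$ whose pointwise rank is $\left|S_i\cap p^{-1}(x)\right|$, and the rank of an idempotent endomorphism of a vector bundle is locally constant, hence equal to its value $1$ at $x_0$ on a neighbourhood. With that inserted, your final closed-map argument does make each $p|_{S_i}$ a homeomorphism onto a neighbourhood of $x_0$, and the proof goes through.
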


 An article \cite{ivankov:qnc} contains   pure algebraic generalizations of following topological objects:
\begin{itemize}
	\item Coverings of noncompact spaces,
	\item Infinite coverings. 
\end{itemize}
Here the described in \cite{ivankov:qnc} theory is applied to operator algebras of foliations.


Following table contains  special symbols.

\begin{tabular}{|c|c|}
\hline
Symbol & Meaning\\
\hline
&\\
$A^+$  & Unitisation of $C^*$- algebra $A$\\

$A_+$  & Cone of positive elements of $C^*$- algebra, i.e. $A_+ = \left\{a\in A \ | \ a \ge 0\right\}$\\
$A^G$  & Algebra of $G$ - invariants, i.e. $A^G = \left\{a\in A \ | \ ga=a, \forall g\in G\right\}$\\
$A''$  & Enveloping von Neumann algebra  of $A$\\

$B(\H)$ & Algebra of bounded operators on a Hilbert space $\H$\\
$\mathbb{C}$ (resp. $\mathbb{R}$)  & Field of complex (resp. real) numbers \\
$C(\mathcal{X})$ & $C^*$- algebra of continuous complex valued \\
 & functions on a compact  space $\mathcal{X}$\\
$C_0(\mathcal{X})$ & $C^*$- algebra of continuous complex valued functions on a locally \\
 &   compact  topological space $\mathcal{X}$ equal to $0$ at infinity\\
 $C_b(\mathcal{X})$ & $C^*$- algebra of bounded  continuous complex valued \\
  & functions on a locally compact topological space $\mathcal{X}$ \\
$G\left( \widetilde{\mathcal{X}}~ |~ \mathcal{X}\right) $ & Group of covering transformations of covering  $\widetilde{\mathcal{X}} \to \mathcal{X}$ \cite{spanier:at}  \\
$\delta_{jk}$ & Delta symbol. If $j = k$ then $\delta_{jk}=1$.  If $j \neq k$ then $\delta_{jk}=0$  \\
$\H$ & Hilbert space \\
$\mathcal{K}= \mathcal{K}\left(\H \right) $ & $C^*$- algebra of compact operators on the separable Hilbert space $\H$  \\
$\ell^2\left(A \right)$ & Standard Hilbert $A$-module\\
$\varinjlim$ & Direct limit \\
$\varprojlim$ & Inverse limit \\
$M(A)$  & A multiplier algebra of $C^*$-algebra $A$\\
$\mathbb{N}$  & A set of positive integer numbers\\
$\mathbb{N}^0$  & A set of nonnegative integer numbers\\


$\supp \varphi$ & Support of a continuous map $\varphi: \mathcal X \to \mathbb{C}$\\
$\mathbb{Z}$ & Ring of integers \\

$\mathbb{Z}_n$ & Ring of integers modulo $n$ \\
$X \backslash A$ & Dif and only iference of sets  $X \backslash A= \{x \in X \ | \ x\notin A\}$\\
$|X|$ & Cardinal number of a finite set $X$\\ 
$f|_{A'}$& Restriction of a map $f: A\to B$ to $A'\subset A$, i.e. $f|_{A'}: A' \to B$\\ 
\hline
\end{tabular}

\break



  


\subsection{Hilbert modules}
\paragraph*{} We refer to \cite{blackadar:ko} for the definition of Hilbert $C^*$-modules, or simply Hilbert modules. Let $A$ be a $C^*$- algebra, and let $X_A$ be an $A$-Hilbert module. Let $\langle \cdot, \cdot \rangle_{X_A}$ be the $A$-valued product on $X_A$. For any $\xi, \zeta \in X_A$ let us define an $A$-endomorphism $\theta_{\xi, \zeta}$ given by  $\theta_{\xi, \zeta}(\eta)=\xi \langle \zeta, \eta \rangle_{X_A}$ where $\eta \in X_A$. The operator  $\theta_{\xi, \zeta}$ shall be denoted by $\xi \rangle\langle \zeta$. The norm completion of a generated by operators $\theta_{\xi, \zeta}$ algebra is said to be an algebra of compact operators $\mathcal{K}(X_A)$. We suppose that there is a left action of $\mathcal{K}(X_A)$ on $X_A$ which is $A$-linear, i.e. action of  $\mathcal{K}(X_A)$ commutes with action of $A$. For any $C^*$-algebra $A$ denote by $\ell^2\left( A\right)$ the \textit{standard Hilbert $A$-module} given by
\begin{equation}\label{st_hilb_end}
\begin{split}
\ell^2\left( A\right) = \left\{\left\{a_n\right\}_{n \in \N}\in A^{\N}~|~\sum_{n =1}^\infty a^*_na_n < \infty \right\},\\
\left\langle\left\{a_n\right\}, \left\{b_n\right\}\right\rangle_{\ell^2\left( A\right)}=\sum_{n =1}^\infty a^*_nb_n.
\end{split}
\end{equation}

  \subsection{$C^*$-algebras and von Neumann algebras}
 \paragraph*{} In this section I follow to \cite{pedersen:ca_aut}.
  \begin{definition}\cite{pedersen:ca_aut} Let $\H$ be a Hilbert space. The {\it strong} topology on $B\left(\H\right)$ is the locally convex vector space topology associated with the family of seminorms of the form $x \mapsto \|x\xi\|$, $x \in B(\H)$, $\xi \in \H$.
  \end{definition}
  \begin{definition}\cite{pedersen:ca_aut} Let $\H$ be a Hilbert space. The {\it weak} topology on $B\left(\H\right)$ is the locally convex vector space topology associated with the family of seminorms of the form $x \mapsto \left|\left(x\xi, \eta\right)\right|$, $x \in B(\H)$, $\xi, \eta \in \H$.
  \end{definition}
  
  \begin{theorem}\label{vN_thm}\cite{pedersen:ca_aut}
  	Let $M$ be a $C^*$-subalgebra of $B(\H)$, containing the identity operator. The following conditions are equivalent:
  	\begin{itemize}
  		\item $M=M''$ where $M''$ is the bicommutant of $M$;
  		\item $M$ is weakly closed;
  		\item $M$ is strongly closed.
  	\end{itemize}
  \end{theorem}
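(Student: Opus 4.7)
The plan is to establish the cycle $(1) \Rightarrow (2) \Rightarrow (3) \Rightarrow (1)$, since the reverse implication in each case either is trivial or follows from the inclusion $M \subseteq M''$.

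For $(1) \Rightarrow (2)$, I would argue that for any subset $S \subseteq B(\H)$, the commutant $S'$ is weakly closed: the condition $xa = ax$ for a fixed $a$ is equivalent to $\scalar{xa\xi}{\eta} = \scalar{ax\xi}{\eta}$ for all $\xi, \eta$, each of which is a weakly continuous condition on $x$. Applying this twice shows $M''$ is weakly closed, and $M = M''$ then gives (2). The implication $(2) \Rightarrow (3)$ is immediate from the fact that the strong topology on $B(\H)$ is finer than the weak topology, so weak closure implies strong closure.

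The main content is $(3) \Rightarrow (1)$. The inclusion $M \subseteq M''$ is automatic, so one must show $M'' \subseteq M$ when $M$ is strongly closed; equivalently, every $T \in M''$ lies in the strong closure of $M$, meaning for any finite collection $\xi_1, \dots, \xi_n \in \H$ and $\varepsilon > 0$ there exists $S \in M$ with $\|(T-S)\xi_i\| < \varepsilon$ for all $i$. I would first handle the case $n=1$: let $P \in B(\H)$ be the orthogonal projection onto the closed subspace $K = \overline{M\xi_1}$. Since $K$ is $M$-invariant and $M$ is a $*$-algebra, $K$ is reducing for $M$, hence $P \in M'$. Because $T \in M''$, we get $TP = PT$, so $T\xi_1 = TP\xi_1 = PT\xi_1 \in K$ (using $1 \in M$ to ensure $\xi_1 \in K$). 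By definition of $K$ there exists $S \in M$ with $\|(T-S)\xi_1\| < \varepsilon$.

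For general $n$, the standard trick is amplification: consider the Hilbert space $\H^{(n)} = \H \oplus \cdots \oplus \H$, identify $B(\H^{(n)})$ with $n\times n$ matrices over $B(\H)$, and let $\pi: M \to B(\H^{(n)})$ be the diagonal embedding $\pi(a) = \mathrm{diag}(a, \dots, a)$. A direct computation shows $\pi(M)' = M_n(M')$ and hence $\pi(M)'' = \pi(M'')$, so $\pi(T) \in \pi(M)''$. Applying the $n=1$ case to $\pi(M)$, the vector $\xi = (\xi_1, \dots, \xi_n) \in \H^{(n)}$, and the operator $\pi(T)$ yields some $S \in M$ with $\|\pi(T-S)\xi\|^2 = \sum_i \|(T-S)\xi_i\|^2 < \varepsilon^2$, which is the required strong approximation. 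Since $M$ is strongly closed, $T \in M$, completing the proof.

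The step I expect to be the main obstacle is the reducing-subspace argument at the heart of $(3) \Rightarrow (1)$: one must carefully verify that the projection onto $\overline{M\xi_1}$ lies in $M'$ (using both that $M$ is a $*$-algebra and that $1 \in M$), and that the commutant computation $\pi(M)' = M_n(M')$ really produces $\pi(T)$ in the bicommutant. Everything else is a routine topology comparison.
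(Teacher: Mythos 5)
Your proof is correct: it is the standard von Neumann bicommutant argument (commutants are weakly closed; weak closure implies strong closure; and the reducing-subspace projection $P\in M'$ plus the $n\times n$ amplification with $\pi(M)'=M_n(M')$ give $(3)\Rightarrow(1)$), and both uses of the hypothesis $1\in M$ are correctly identified. The paper itself gives no proof --- the theorem is quoted verbatim from Pedersen --- and your argument is essentially the one found in that cited source, so there is nothing to compare beyond noting agreement.
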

  
  \begin{definition}
  	Any $C^*$-algebra $M$ is said to be a {\it von Neumann algebra} or a {\it $W^*$- algebra} if $M$ satisfies to the conditions of the Theorem \ref{vN_thm}.
  \end{definition}
  \begin{definition} \cite{pedersen:ca_aut}
  	Let $A$ be a $C^*$-algebra, and let $S$ be the state space of $A$. For any $s \in S$ there is an associated representation $\pi_s: A \to B\left( \H_s\right)$. The representation $\bigoplus_{s \in S} \pi_s: A \to \bigoplus_{s \in S} B\left(\H_s \right)$ is said to be the \textit{universal representation}. The universal representation can be regarded as $A \to B\left( \bigoplus_{s \in S}\H_s\right)$.  
  \end{definition} 
  \begin{definition}\label{env_alg_defn}\cite{pedersen:ca_aut}
  	Let   $A$ be a $C^*$-algebra, and let $A \to B\left(\H \right)$ be the universal representation $A \to B\left(\H \right)$. The strong closure of $\pi\left( A\right)$ is said to be   the  {\it enveloping von Neumann algebra} or  the {\it enveloping $W^*$-algebra}  of $A$. The enveloping  von Neumann algebra will be denoted by $A''$.
  \end{definition}

  \section{Noncommutative finite-fold coverings}\label{ff_c_sec}

\paragraph*{}
\begin{definition}
	If $A$ is a $C^*$- algebra then an action of a group $G$ is said to be {\it involutive } if $ga^* = \left(ga\right)^*$ for any $a \in A$ and $g\in G$. The action is said to be \textit{non-degenerated} if for any nontrivial $g \in G$ there is $a \in A$ such that $ga\neq a$. 
\end{definition}
\begin{definition}\label{fin_def_uni}
	Let $A \hookto \widetilde{A}$ be an injective *-homomorphism of unital $C^*$-algebras. Suppose that there is a non-degenerated involutive action $G \times \widetilde{A} \to \widetilde{A}$ of a finite group $G$, such that $A = \widetilde{A}^G\stackrel{\text{def}}{=}\left\{a\in \widetilde{A}~|~ a = g a;~ \forall g \in G\right\}$. There is an $A$-valued product on $\widetilde{A}$ given by
	\begin{equation}\label{finite_hilb_mod_prod_eqn}
	\left\langle a, b \right\rangle_{\widetilde{A}}=\sum_{g \in G} g\left( a^* b\right) 
	\end{equation}
	and $\widetilde{A}$ is an $A$-Hilbert module. We say that $\left(A, \widetilde{A}, G \right)$ is an \textit{unital noncommutative finite-fold  covering} if $\widetilde{A}$ is a finitely generated projective $A$-Hilbert module.
\end{definition}
\begin{remark}
	Above definition is motivated by the Theorem \ref{pavlov_troisky_thm}.
\end{remark}
\begin{definition}\label{fin_def}
	Let $A$, $\widetilde{A}$ be $C^*$-algebras such  that following conditions hold:
	\begin{enumerate}
		\item[(a)] 
		There are unital $C^*$-algebras $B$, $\widetilde{B}$  and inclusions 
		$A \subset B$,  $\widetilde{A}\subset \widetilde{B}$ such that $A$ (resp. $B$) is an essential ideal of $\widetilde{A}$ (resp. $\widetilde{B}$),
		\item[(b)] There is an unital  noncommutative finite-fold covering $\left(B ,\widetilde{B}, G \right)$,
		\item[(c)] 
		\begin{equation}\label{wta_eqn}
		\widetilde{A} =  \left\{a\in \widetilde{B}  ~|~ \left\langle \widetilde{B} ,a  \right\rangle_{\widetilde{B} } \in A \right\}.
		\end{equation}
	\end{enumerate}
	
	The triple $\left(A, \widetilde{A},G \right)$ is said to be a \textit{noncommutative finite-fold covering}. The group $G$ is said to be the \textit{covering transformation group} (of $\left(A, \widetilde{A},G \right)$ ) and we use the following notation
	\begin{equation}\label{group_cov_eqn}
	G\left(\widetilde{A}~|~A \right) \stackrel{\mathrm{def}}{=} G.
	\end{equation}
\end{definition}
\begin{lemma}\label{fin_cov_constr}\cite{ivankov:qnc}Let us consider the situation of the Definition \ref{fin_def}. Following conditions hold:
	\begin{enumerate}
		\item [(i)] From \eqref{wta_eqn} it turns out that $\widetilde{A}$ is a closed two sided ideal of $\widetilde{B}$,
		\item[(ii)] The action of $G$ on $\widetilde{B}$ is such that $G\widetilde{A}=\widetilde{A}$, i.e. there is the natural action of $G$ on $\widetilde{A}$,
		\item[(iii)] \begin{equation}\label{wtag_eqn}
		A \cong \widetilde{A}^G=\left\{a\in \widetilde{A}~|~ a = g a;~ \forall g \in G\right\}.
		\end{equation}
\end{enumerate}	\end{lemma}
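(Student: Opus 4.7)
I would extract all three assertions directly from \eqref{wta_eqn}, using repeatedly that $(B,\widetilde{B},G)$ is a unital noncommutative finite-fold covering so that $\widetilde{B}$ is a finitely generated projective right Hilbert $B$-module with $B=\widetilde B^G$. For Part~(i), closedness is immediate from continuity: if $a_n\to a$ in $\widetilde B$ with $a_n\in\widetilde A$, then $\langle b,a_n\rangle_{\widetilde B}\to\langle b,a\rangle_{\widetilde B}$ for every $b\in\widetilde B$, and the limit lies in the norm-closed $A$. The left-ideal property is a short computation: for $c\in\widetilde B$ and $a\in\widetilde A$,
\[
\langle b,ca\rangle_{\widetilde B}=\sum_{g\in G}g(b^*ca)=\sum_{g\in G}g\bigl((c^*b)^*a\bigr)=\langle c^*b,a\rangle_{\widetilde B}\in A.
\]

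For the two-sided property I would use the frame reconstruction $a=\sum_i\widetilde b_i\langle\widetilde b_i,a\rangle_{\widetilde B}$ supplied by projectivity to identify $\widetilde A=\widetilde B\cdot A$: every $a\in\widetilde A$ is written $\sum_i\widetilde b_i\alpha_i$ with $\alpha_i=\langle\widetilde b_i,a\rangle_{\widetilde B}\in A$, and conversely $\widetilde B\cdot A\subset\widetilde A$ is immediate from $B$-linearity of $\langle-,-\rangle_{\widetilde B}$ in the second slot together with $B\cdot A\subset A$. Taking adjoints gives $\widetilde A^*=A\cdot\widetilde B$, so self-adjointness of $\widetilde A$ (equivalent to the right-ideal property, once left-ideal is known) amounts to the equality $\widetilde B\cdot A=A\cdot\widetilde B$; I would obtain this by rerunning the same frame argument with respect to the left $B$-Hilbert module structure on $\widetilde B$ given by $\langle x,y\rangle_L=\sum_g g(xy^*)$ and its associated left frame.

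For Part~(ii), the substitution $g'=gh$ together with involutivity $h^{-1}(b^*)=(h^{-1}b)^*$ yields
\[
\sum_g g\bigl(b^*h(a)\bigr)=\sum_{g'}g'\bigl((h^{-1}b)^*a\bigr)=\langle h^{-1}b,a\rangle_{\widetilde B}\in A,
\]
so $h(a)\in\widetilde A$. For Part~(iii), every $\alpha\in A\subset B=\widetilde B^G$ satisfies $g(\alpha)=\alpha$, whence $\langle b,\alpha\rangle_{\widetilde B}=\bigl(\sum_g g(b^*)\bigr)\alpha\in B\cdot A\subset A$, giving $\alpha\in\widetilde A^G$; conversely, $a\in\widetilde A^G$ forces $a\in\widetilde B^G=B$, and taking $b=1_{\widetilde B}$ in \eqref{wta_eqn} gives $|G|\,a=\sum_g g(a)\in A$, so $a\in A$. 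I expect the main obstacle to be the two-sided step in Part~(i): the left-ideal computation is purely formal, but right-stability requires exploiting the symmetric role of the left and right $B$-bimodule structures on $\widetilde B$, and is the point at which the essentiality of $A$ inside $B$ genuinely enters.
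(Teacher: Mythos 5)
The paper offers no proof of this lemma to compare against: it is imported verbatim from \cite{ivankov:qnc}, so your argument has to stand on its own. Parts (ii) and (iii) are correct as written: the substitution $g'=gh$ together with involutivity does give $\langle b,h(a)\rangle_{\widetilde B}=\langle h^{-1}b,a\rangle_{\widetilde B}$, and your two inclusions for $\widetilde A^G=A$ use exactly the right ingredients ($A\subset B=\widetilde B^G$, $B\cdot A\subset A$, and $b=1_{\widetilde B}$ in \eqref{wta_eqn}). In part (i), closedness, the left-ideal identity $\langle b,ca\rangle_{\widetilde B}=\langle c^*b,a\rangle_{\widetilde B}$, and the frame identification $\widetilde A=\widetilde B\cdot A$ are all fine.

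The gap is the two-sided step, exactly where you flagged it, and your proposed repair does not close it. Running the frame argument for the left structure $\langle x,y\rangle_L=\sum_g g(xy^*)$ identifies the set $\widetilde A_L=\{a:\langle a,\widetilde B\rangle_L\subset A\}$ with $A\cdot\widetilde B$; but since $\langle a,b\rangle_L=\langle b^*,a^*\rangle_{\widetilde B}^{\,*}$ one has $\widetilde A_L=\widetilde A^*$, so this only re-derives $\widetilde A^*=A\cdot\widetilde B$, the adjoint of what you already know, and gives no information about whether $A\cdot\widetilde B\subset\widetilde A$ — that is precisely the statement to be proved, so the argument is circular. Worse, the inclusion $A\cdot\widetilde B\subset\widetilde A$ is not a formal consequence of the data you invoke. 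Take $\widetilde B=M_2\left(C\left(\mathcal X\right)\right)$ with $G=\Z_2$ acting by $\Ad\left(\mathrm{diag}\left(1,-1\right)\right)$, so that $B=\widetilde B^G$ is the diagonal copy of $C\left(\mathcal X\right)\oplus C\left(\mathcal X\right)$ and $\widetilde B$ is free of rank two over $B$; let $A=C_0\left(U_1\right)\oplus C_0\left(U_2\right)$ with $U_1\neq U_2$ dense open subsets of $\mathcal X$, an essential ideal of $B$. Here $\langle b,a\rangle_{\widetilde B}=2\,\mathrm{diag}\left(\left(b^*a\right)_{11},\left(b^*a\right)_{22}\right)$, and \eqref{wta_eqn} yields the set of matrices whose first column lies in $C_0\left(U_1\right)$ and whose second column lies in $C_0\left(U_2\right)$: a closed left ideal satisfying (ii) and (iii) but neither self-adjoint nor a right ideal. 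So essentiality of $A$ in $B$, which you name as the decisive ingredient, does not suffice (it holds in this example); two-sidedness must instead come from the clause of condition (a) of Definition \ref{fin_def} that already posits $\widetilde A$ to be an (essential) ideal of $\widetilde B$ — under which reading part (i) is a hypothesis rather than a consequence of \eqref{wta_eqn} — or from additional input in \cite{ivankov:qnc} that neither this paper nor your proposal records.
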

\begin{remark}
	The Definition \ref{fin_cov_constr} is motivated by the Theorem \ref{comm_fin_thm}.
\end{remark}

\begin{definition}
	The injective *-homomorphism $A \hookto \widetilde{A}$, which follows from \eqref{wtag_eqn}  is said to be a \textit{noncommutative finite-fold covering}.
\end{definition}
\begin{definition}\label{hilbert_product_defn}
	Let $\left(A, \widetilde{A}, G\right)$ be a    noncommutative finite-fold covering.  Algebra  $\widetilde{A}$  is a Hilbert $A$-module with an $A$-valued  product given by
	\begin{equation}\label{fin_form_a}
	\left\langle a, b \right\rangle_{\widetilde{A}} = 
	\sum_{g \in G} g(a^*b); ~ a,b \in \widetilde{A}.
	\end{equation}
	We say that this structure of Hilbert $A$-module is {\it induced by the covering} $\left(A, \widetilde{A}, G\right)$. Henceforth we shall consider $\widetilde{A}$ as a right $A$-module, so we will write $\widetilde{A}_A$. 
\end{definition}


\section{Noncommutative infinite coverings}\label{bas_constr}

\paragraph*{}
This section contains a noncommutative generalization of infinite coverings.
\begin{definition}\label{comp_defn}
	Let
	\begin{equation*}
	\mathfrak{S} =\left\{ A =A_0 \xrightarrow{\pi_1} A_1 \xrightarrow{\pi_2} ... \xrightarrow{\pi_n} A_n \xrightarrow{\pi^{n+1}} ...\right\}
	\end{equation*}
	be a sequence of $C^*$-algebras and noncommutative finite-fold coverings such that:
	\begin{enumerate}
		\item[(a)] Any composition $\pi_{n_1}\circ ...\circ\pi_{n_0+1}\circ\pi_{n_0}:A_{n_0}\to A_{n_1}$ corresponds to the noncommutative covering $\left(A_{n_0}, A_{n_1}, G\left(A_{n_1}~|~A_{n_0}\right)\right)$;
		\item[(b)] If $k < l < m$ then $G\left( A_m~|~A_k\right)A_l = A_l$ (Action of $G\left( A_m~|~A_k\right)$ on $A_l$ means that $G\left( A_m~|~A_k\right)$ acts on $A_m$, so $G\left( A_m~|~A_k\right)$ acts on $A_l$ since $A_l$ a subalgebra of $A_m$);
		\item[(c)] If $k < l < m$ are nonegative integers then there is the natural exact sequence of covering transformation groups
		\begin{equation*}
		\{e\}\to G\left(A_{m}~|~A_{l}\right) \xrightarrow{\iota} G\left(A_{m}~|~A_{k}\right)\xrightarrow{\pi}G\left(A_{l}~|~A_{k}\right)\to\{e\}
		\end{equation*}
		where the existence of the homomorphism $G\left(A_{m}~|~A_{k}\right)\xrightarrow{\pi}G\left(A_{l}~|~A_{k}\right)$ follows from (b).
		
	\end{enumerate}
	The sequence
	$\mathfrak{S}$
	is said to be an \textit{(algebraical)  finite covering sequence}. 
	For any finite covering sequence we will use the notation $\mathfrak{S} \in \mathfrak{FinAlg}$.
\end{definition}
\begin{definition}\label{equiv_act_defn}
	Let $\widehat{A} = \varinjlim A_n$  be the $C^*$-inductive limit \cite{murphy}, and suppose that $\widehat{G}= \varprojlim G\left(A_n~|~A \right) $ is the projective limit of groups \cite{spanier:at}. There is the natural action of $\widehat{G}$ on $\widehat{A}$. A non-degenerate faithful representation $\widehat{A} \to B\left( \H\right) $ is said to be \textit{equivariant} if there is an action of $\widehat{G}$ on $\H$ such that for any $\xi \in \H$ and $g \in  \widehat{G}$ following condition holds
	\begin{equation}\label{equiv_act_eqn}
	\left(ga \right) \xi = g\left(a\left(g^{-1}\xi \right)  \right) .
	\end{equation}
\end{definition}
\begin{definition}\label{special_el_defn}
	Let $\pi:\widehat{A} \to B\left( \H\right) $ be an equivariant representation.  A positive element  $\overline{a}  \in B\left(\H \right)_+ $ is said to be \textit{special} (with respect to $\mathfrak S$) if following conditions hold:
	\begin{enumerate}
		\item[(a)] For any $n \in \mathbb{N}^0$  the following  series 
		\begin{equation*}
		\begin{split}
		a_n = \sum_{g \in \ker\left( \widehat{G} \to  G\left( A_n~|~A \right)\right)} g  \overline{a} 
		\end{split}
		\end{equation*}
		is strongly convergent and the sum lies in   $A_n$, i.e. $a_n \in A_n $;		
		\item[(b)]
	If $f_\eps: \R \to \R$ is given by 
	\begin{equation}\label{f_eps_eqn}
	f_\eps\left( x\right)  =\left\{
	\begin{array}{c l}
	0 &x \le \eps \\
	x - \eps & x > \eps
	\end{array}\right.
	\end{equation}
	then for any $n \in \mathbb{N}^0$ and for any $z \in A$   following  series 
		\begin{equation*}
		\begin{split}
		b_n = \sum_{g \in \ker\left( \widehat{G} \to  G\left( A_n~|~A \right)\right)} g \left(z  \overline{a} z^*\right) ,\\
		c_n = \sum_{g \in \ker\left( \widehat{G} \to  G\left( A_n~|~A \right)\right)} g \left(z  \overline{a} z^*\right)^2,\\
		d_n = \sum_{g \in \ker\left( \widehat{G} \to  G\left( A_n~|~A \right)\right)} g f_\eps\left( z  \overline{a} z^* \right) 
		\end{split}
		\end{equation*}
		are strongly convergent and the sums lie in   $A_n$, i.e. $b_n,~ c_n,~ d_n \in A_n $; 
		\item[(c)] For any $\eps > 0$ there is $N \in \N$ (which depends on $\overline{a}$ and $z$) such that for any $n \ge N$ a following condition holds
		\begin{equation}\label{square_condition_equ}
		\begin{split}
		\left\| b_n^2 - c_n\right\| < \eps.
		\end{split}
		\end{equation}	
	\end{enumerate}
	
	An element  $\overline{   a}' \in B\left( \H\right) $ is said to be \textit{weakly special} if 
	$$
	\overline{   a}' = x\overline{a}y; \text{ where }   x,y \in \widehat{A}, \text{ and } \overline{a} \in B\left(\H \right)  \text{ is special}.
	$$
	
\end{definition}
\begin{lemma}\cite{ivankov:qnc}\label{stong_conv_inf_lem}
	If $\overline{a} \in B\left( \H\right)_+$ is a special element and ${G}_n=\ker\left( \widehat{G} \to  G\left( A_n~|~A \right)\right)$ then from
	\begin{equation*}
	\begin{split}
	a_n = \sum_{g \in {G}_n} g \overline{a},
	\end{split}
	\end{equation*}
	it follows that $\overline{a} = \lim_{n \to \infty} a_n$ in the sense of the strong convergence. Moreover  one has $\overline{a} =\inf_{n \in \N}a_n$.
\end{lemma}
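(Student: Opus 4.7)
The plan is to show that $(a_n)$ is a decreasing sequence of positive operators bounded below by $\overline{a}$, and then to identify its infimum with $\overline{a}$ via a tail-of-series estimate based on $\bigcap_n G_n = \{e\}$.

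First I would establish monotonicity. The exact sequence in Definition \ref{comp_defn}(c) exhibits $G_{n+1}$ as a normal subgroup of $G_n$ with quotient isomorphic to $G(A_{n+1}|A_n)$; picking coset representatives $\{h\}$ for $G_n/G_{n+1}$ and regrouping $\sum_{g \in G_n} g\overline{a}$ according to cosets yields $a_n = \sum_{h} h\,a_{n+1}$, so $h = e$ being among the representatives gives $a_n \geq a_{n+1}$. Isolating the identity term in the defining sum likewise produces $a_n \geq \overline{a}$. The general fact that a decreasing bounded-below sequence of positive self-adjoint operators converges in the strong operator topology to its infimum then supplies a strong limit $a_\infty := \inf_n a_n$ with $a_\infty \geq \overline{a}$, so the remaining work is to show $a_\infty = \overline{a}$.

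To that end I would invoke condition (a) of Definition \ref{special_el_defn} at $n = 0$: since $A_0 = A$ the deck group $G(A_0|A)$ is trivial, hence $G_0 = \widehat{G}$ and the full orbit sum $a_0 = \sum_{g \in \widehat{G}} g\overline{a}$ converges strongly. Pairing with any $\xi \in \H$ produces a convergent series $\sum_{g \in \widehat{G}} c_g = \langle a_0\xi,\xi\rangle$ of nonnegative reals $c_g := \langle g\overline{a}\,\xi,\xi\rangle$, whose support is therefore countable and which is absolutely summable. Since $\widehat{G} = \varprojlim G(A_n|A)$ one has $\bigcap_n G_n = \{e\}$; given $\varepsilon > 0$, pick a finite set $F \subset \widehat{G} \setminus \{e\}$ carrying all but $\varepsilon$ of the mass. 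Each element of $F$ eventually exits $G_n$, so $F \cap G_n = \varnothing$ for $n$ large, and consequently $\langle a_n\xi,\xi\rangle - \langle \overline{a}\,\xi,\xi\rangle = \sum_{g \in G_n \setminus\{e\}} c_g < \varepsilon$. Thus $a_n \to \overline{a}$ weakly, and together with monotonicity this upgrades to strong convergence and forces $\inf_n a_n = \overline{a}$.

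The main obstacle is the last tail-vanishing step: the set-theoretic property $\bigcap_n G_n = \{e\}$ must be converted into quantitative control over the orbit partial sums. The crucial input is that strong convergence of $a_0$ makes the matrix elements $c_g$ absolutely summable with countable support, reducing the problem to the combinatorial fact that every finite subset of $\widehat{G} \setminus \{e\}$ is eventually disjoint from $G_n$.
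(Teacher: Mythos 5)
The paper does not prove this lemma; it is imported verbatim from \cite{ivankov:qnc}, so there is no in-text argument to compare against. Your proposal is a correct, self-contained derivation from Definitions \ref{comp_defn} and \ref{special_el_defn}: the coset regrouping giving $a_n=\sum_{h\in G_n/G_{n+1}}h\,a_{n+1}\ge a_{n+1}\ge\overline{a}$ is legitimate because all summands are positive and the $\widehat{G}$-action is implemented by unitaries (so conjugation commutes with strong limits of bounded increasing nets), Vigier's theorem gives the strong limit $a_\infty=\inf_n a_n$, and the tail estimate via $G_0=\widehat{G}$, countable support of the nonnegative matrix coefficients $c_g$, and $\bigcap_n G_n=\{e\}$ correctly identifies $a_\infty$ with $\overline{a}$ in the weak (hence, by monotonicity, strong) topology. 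The only points worth making explicit in a written version are the two standard facts you lean on implicitly: that a strongly convergent sum of positive operators may be rearranged (its value is the supremum of finite partial sums), and that $G(A_0\,|\,A)$ is trivial because a non-degenerate action fixing all of $A$ must be trivial, so condition (a) of Definition \ref{special_el_defn} at $n=0$ indeed gives strong convergence of the full orbit sum over $\widehat{G}$.
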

\begin{corollary}\cite{ivankov:qnc}\label{special_cor}
	Any weakly special element lies in the enveloping von Neumann algebra $\widehat{A}''$ of $\widehat{A}=\varinjlim A_n$. If $\overline{A}_\pi \subset B\left( \H\right)$ is the $C^*$-norm completion of an algebra generated by weakly special elements then $\overline{A}_\pi \subset \widehat{A}''$.
\end{corollary}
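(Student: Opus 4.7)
The plan is to chain together Lemma \ref{stong_conv_inf_lem} with the strong closedness of the enveloping von Neumann algebra, and then use basic algebraic closure properties to extend from special elements to weakly special ones, and finally to their $C^*$-algebra.

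First, let $\overline{a} \in B(\H)_+$ be a special element. By Lemma \ref{stong_conv_inf_lem}, $\overline{a}$ is the strong limit of the net $\{a_n\}_{n \in \N}$, where each $a_n$ lies in $A_n \subset \widehat{A}$. Viewing $\pi(\widehat{A}) \subset B(\H)$ through the given equivariant representation, we identify $\widehat{A}''$ with the strong (equivalently, weak) closure of $\pi(\widehat{A})$ via Theorem \ref{vN_thm}. Consequently $\overline{a}$ lies in $\widehat{A}''$.

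Next, a weakly special element has the form $\overline{a}' = x \overline{a} y$ with $x, y \in \widehat{A} \subset \widehat{A}''$ and $\overline{a}$ special. Since $\widehat{A}''$ is an algebra containing both $\widehat{A}$ and every special element, it contains the product $x \overline{a} y$. Hence every weakly special element belongs to $\widehat{A}''$, and therefore so does every element of the $*$-algebra generated by weakly special elements.

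Finally, $\widehat{A}''$ is norm-closed (being a $C^*$-subalgebra of $B(\H)$), so the $C^*$-norm completion $\overline{A}_\pi$ of that generated algebra is also contained in $\widehat{A}''$. The only conceptual subtlety to flag is the identification of $\widehat{A}''$ with the strong closure of $\pi(\widehat{A})$ in $B(\H)$ rather than with the bicommutant coming from the universal representation of Definition \ref{env_alg_defn}; this is justified because a non-degenerate representation extends to a normal representation of the enveloping von Neumann algebra, so the image bicommutant is canonically a quotient of $\widehat{A}''$ and the containment $\overline{A}_\pi \subset \widehat{A}''$ is meant in this sense. No further obstacle arises: the real work was carried out in Lemma \ref{stong_conv_inf_lem}, and the corollary is essentially a formal consequence.
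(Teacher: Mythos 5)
Your proof is correct and follows the route the paper intends: Lemma \ref{stong_conv_inf_lem} puts each special element in the strong closure of $\pi(\widehat{A})$, hence in the bicommutant, and the extension to weakly special elements and to the norm completion $\overline{A}_\pi$ is the formal algebra/norm-closedness argument you give (the paper itself only cites this corollary from \cite{ivankov:qnc} without reproducing a proof). Your remark about identifying $\widehat{A}''$ with the strong closure of $\pi(\widehat{A})$ rather than the universal-representation bicommutant is exactly the right subtlety to flag, and your resolution via the normal extension is the standard one.
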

\begin{lemma}\cite{ivankov:qnc}
	If $\overline{a}\in B\left( \H\right)$ is special, (resp.  $\overline{a}'\in B\left( \H\right)$ weakly special) then for any  $g \in \widehat{G}$ the element  $g\overline{a}$ is special, (resp. $g\overline{a}'$ is weakly special).
\end{lemma}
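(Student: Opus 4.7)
The plan is to push the action of $g$ inside each of the sums in Definition \ref{special_el_defn} by using three simple facts about the equivariant setup, and then observe that the target conditions are automorphism-invariant. The three facts I would establish first are: (i) $G_n := \ker\!\left(\widehat{G}\to G(A_n\mid A)\right)$ is a normal subgroup of $\widehat{G}$, so $g^{-1}G_n g = G_n$ for every $g\in\widehat{G}$; (ii) since the projection $\widehat{G}\to G(A_0\mid A)=\{e\}$ is trivial, every element of $\widehat{G}$ fixes the base algebra $A$ pointwise; (iii) by condition (b) of Definition \ref{comp_defn}, $\widehat{G}$ preserves each $A_n$ setwise.

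The main computation is the identity
\begin{equation*}
\sum_{h\in G_n} h(gx) \;=\; \sum_{h\in G_n}(hg)x \;=\; g\sum_{h'\in G_n}(g^{-1}hg)x \;=\; g\sum_{h'\in G_n}h'x,
\end{equation*}
valid for any $x\in B(\H)$ on which the sum on the right converges strongly, because the substitution $h'=g^{-1}hg$ is a bijection of $G_n$ by (i). Applying this with $x=\overline{a}$ handles condition (a) for $g\overline{a}$: the resulting sum is $g a_n$, which lies in $A_n$ by (iii). For condition (b), fact (ii) gives $h(z\,(g\overline{a})\,z^*) = h(g(z\overline{a}z^*))$ for every $h\in\widehat{G}$ and every $z\in A$, because $z,z^*$ are $\widehat{G}$-fixed and $g$ is a $*$-automorphism; the same manipulation then yields that the sums $b_n,c_n,d_n$ associated to $g\overline{a}$ are $g$ applied to those associated to $\overline{a}$, hence again lie in $A_n$. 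Condition (c) is immediate: since $g$ acts by an isometric $*$-automorphism,
\begin{equation*}
\bigl\|(g b_n)^2 - g c_n\bigr\| \;=\; \bigl\|g(b_n^2-c_n)\bigr\| \;=\; \|b_n^2-c_n\|,
\end{equation*}
so the same $N$ that works for $\overline{a}$ and $z$ works for $g\overline{a}$ and $z$.

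The weakly special case reduces instantly to the special case: if $\overline{a}'=x\overline{a}y$ with $x,y\in\widehat{A}$ and $\overline{a}$ special, then $g\overline{a}'=(gx)(g\overline{a})(gy)$, and $gx,gy\in\widehat{A}$ because $\widehat{G}$ preserves $\widehat{A}$, while $g\overline{a}$ is special by what was just shown.

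The only delicate point is to justify that the rearranged series converge in the strong topology. I expect to handle this by invoking positivity: all of $\overline{a}$, $z\overline{a}z^*$, $(z\overline{a}z^*)^2$, and $f_\eps(z\overline{a}z^*)$ are positive operators, so the original series of their $G_n$-translates converge unconditionally in the strong topology. Since $g$ is implemented in the equivariant representation by a unitary $U_g$, and $x\mapsto U_gxU_g^*$ is strongly continuous on bounded sets, the rearranged series converge strongly to $g$ times the original limits, which is exactly what the argument needs.
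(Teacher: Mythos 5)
The paper states this lemma without proof (it is quoted from \cite{ivankov:qnc}), so there is no in-paper argument to compare against; judged on its own, your proposal is correct and is the natural argument. Your three preliminary facts do hold ($G_n$ is a kernel, hence normal in $\widehat{G}$; $A=A_n^{G(A_n|A)}$ by Lemma \ref{fin_cov_constr}, so $\widehat{G}$ fixes $A$ pointwise and the implementing unitary $U_g$ commutes with $\pi(z)$ for $z\in A$; condition (b) of Definition \ref{comp_defn} gives $\widehat{G}A_n=A_n$), and the re-indexing $h\mapsto g^{-1}hg$ together with conjugation by $U_g$ transports each of conditions (a)--(c) of Definition \ref{special_el_defn} from $\overline{a}$ to $g\overline{a}=U_g\overline{a}U_g^*$ (which is again positive), with the positivity of all terms guaranteeing unconditional strong convergence of the rearranged series, consistent with Lemma \ref{stong_conv_inf_lem}. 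The reduction of the weakly special case to the special case is likewise correct.
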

\begin{corollary}\label{disconnect_group_action_cor}\cite{ivankov:qnc}
	If $\overline{A}_\pi \subset B\left( \H\right)$ is the $C^*$-norm completion of algebra generated by weakly special elements, then there is a natural action of $\widehat{G}$ on $\overline{A}_\pi$.
\end{corollary}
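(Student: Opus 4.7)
The plan is to exhibit the action of $\widehat{G}$ on $\overline{A}_\pi$ as the restriction of a conjugation action on $B(\H)$, and to observe that the preceding lemma is exactly the statement that this restriction is well defined.

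First, I would use the equivariance hypothesis of Definition \ref{equiv_act_defn}. Since $\widehat{G}$ acts on $\H$ in a way compatible with its action on $\widehat{A}$, each $g \in \widehat{G}$ is implemented by a unitary $u_g \in B(\H)$ (namely $u_g\xi = g\xi$), and the relation $(ga)\xi = g(a(g^{-1}\xi))$ in \eqref{equiv_act_eqn} can be rewritten as $\pi(ga) = u_g\,\pi(a)\,u_g^{-1}$. Consequently, conjugation by $u_g$ defines a $*$-automorphism $\alpha_g \colon B(\H)\to B(\H)$, $\alpha_g(T) = u_g T u_g^{-1}$, and the map $g \mapsto \alpha_g$ is a group homomorphism from $\widehat{G}$ to $\Aut B(\H)$ extending the given action on $\widehat{A}$.

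Next I would restrict $\alpha_g$ to $\overline{A}_\pi$. By the immediately preceding lemma, if $\overline{a}'\in B(\H)$ is weakly special then $g\overline{a}' = \alpha_g(\overline{a}')$ is again weakly special, for every $g \in \widehat{G}$. Because $\alpha_g$ is a $*$-automorphism of $B(\H)$, it sends the $*$-algebra $\mathcal A_0$ generated by weakly special elements into itself; and because $\alpha_g$ is an isometry with respect to the operator norm, it extends continuously to the norm closure $\overline{A}_\pi = \overline{\mathcal A_0}$ and maps it into itself. Applying the same argument to $g^{-1}$ shows that $\alpha_g(\overline{A}_\pi) = \overline{A}_\pi$, so we get a group homomorphism $\widehat{G}\to \Aut(\overline{A}_\pi)$, which is the required natural action.

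Finally, I would note that this action genuinely extends the action of $\widehat{G}$ on $\widehat{A}$: one has $\widehat{A}\subset \overline{A}_\pi\subset \widehat{A}''$ by Corollary \ref{special_cor}, and the restriction of $\alpha_g$ to $\pi(\widehat{A})$ agrees with the original $\widehat{G}$-action by the equivariance identity recalled above. There is essentially no obstacle here; the only mild point to check is that the generating set is stable under $\widehat{G}$, and that is precisely the content of the lemma invoked just before the corollary, so the argument reduces to combining that lemma with the continuity of $\alpha_g$.
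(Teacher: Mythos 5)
Your argument is correct and matches the route the paper intends: the corollary is stated as an immediate consequence of the preceding lemma (stability of the weakly special elements under $\widehat{G}$), and you supply exactly the routine completion of that argument — the action is implemented by conjugation with the unitaries coming from the equivariant representation, so it preserves the generated $*$-algebra and extends isometrically to its norm closure $\overline{A}_\pi$. The only implicit point worth flagging is that Definition \ref{equiv_act_defn} must be read as giving a \emph{unitary} action of $\widehat{G}$ on $\H$ for $\alpha_g$ to be an isometric $*$-automorphism, which is the standard interpretation.
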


\begin{definition}\label{main_defn_full}
	Let $\mathfrak{S} =\left\{ A =A_0 \xrightarrow{\pi^1} A_1 \xrightarrow{\pi^2} ... \xrightarrow{\pi^n} A_n \xrightarrow{\pi^{n+1}} ...\right\}$ be an  algebraical  finite covering sequence. Let  $\pi:\widehat{A} \to B\left( \H\right) $ be an equivariant representation.	 Let $\overline{A}_\pi \subset B\left( \H\right)$ be the $C^*$-norm completion of algebra generated by weakly special elements. We say that $\overline{A}_\pi$ is the {\it disconnected inverse noncommutative limit} of $\downarrow\mathfrak{S}$ (\textit{with respect to $\pi$}). 
	The triple  $\left(A, \overline{A}_\pi, G\left(\overline{A}_\pi~|~ A\right)\stackrel{\mathrm{def}}{=} \widehat{G}\right)$ is said to be the  {\it disconnected infinite noncommutative covering } of $\mathfrak{S}$ (\textit{with respect to $\pi$}). If $\pi$ is the universal representation then "with respect to $\pi$" is dropped and we will write
	$\left(A, \overline{A}, G\left(\overline{A}~|~ A\right)\right)$.	
\end{definition}
\begin{definition}\label{main_sdefn}
	A  maximal irreducible subalgebra $\widetilde{A}_\pi \subset \overline{A}_\pi$  is said to be a {\it connected component} of $\mathfrak{S}$ ({\it with respect to $\pi$}). The maximal subgroup $G_\pi\subset G\left(\overline{A}_\pi~|~ A\right)$ among subgroups $G\subset G\left(\overline{A}_\pi~|~ A\right)$ such that $G\widetilde{A}_\pi=\widetilde{A}_\pi$  is said to be the $\widetilde{A}_\pi$-{\it  invariant group} of $\mathfrak{S}$. If $\pi$ is the universal representation then "with respect to $\pi$" is dropped. 
\end{definition}

\begin{remark}
	From the Definition \ref{main_sdefn} it follows that $G_\pi \subset G\left(\overline{A}_\pi~|~ A\right)$ is a normal subgroup.
\end{remark}
\begin{definition}\label{good_seq_defn} Let $$\mathfrak{S} = \left\{ A =A_0 \xrightarrow{\pi^1} A_1 \xrightarrow{\pi^2} ... \xrightarrow{\pi^n} A_n \xrightarrow{\pi^{n+1}} ...\right\} \in \mathfrak{FinAlg},$$ and let $\left(A, \overline{A}_\pi, G\left(\overline{A}_\pi~|~ A\right)\right)$ be a disconnected infinite noncommutative covering of $\mathfrak{S}$ with respect to an equivariant representation $\pi: \varinjlim A_n\to B\left(\H \right) $. Let $\widetilde{A}_\pi\subset \overline{A}_\pi$  be a connected component of $\mathfrak{S}$ with respect to $\pi$, and let $G_\pi \subset  G\left(\overline{A}_\pi~|~ A\right)$  be the $\widetilde{A}_\pi$ -  invariant group of $\mathfrak{S}$.
	Let  $h_n : G\left(\overline{A}_\pi~|~ A\right) \to  G\left( A_n~|~A \right)$ be the natural surjective  homomorphism. The  representation $\pi: \varinjlim A_n\to B\left(\H \right)$ is said to be \textit{good} if it satisfies to following conditions:
	\begin{enumerate}
		\item[(a)] The natural *-homomorphism $ \varinjlim A_n \to  M\left(\widetilde{A}_\pi \right)$ is injective,
		\item[(b)] If $J\subset G\left(\overline{A}_\pi~|~ A\right)$ is a set of representatives of $G\left(\overline{A}_\pi~|~ A\right)/G_\pi$, then the algebraic direct sum
		\begin{equation*}
		\bigoplus_{g\in J} g\widetilde{A}_\pi
		\end{equation*}
		is a dense subalgebra of $\overline{A}_\pi$,
		\item [(c)] For any $n \in \N$ the restriction $h_n|_{G_\pi}$ is an epimorphism, i. e. $h_n\left(G_\pi \right) = G\left( A_n~|~A \right)$.
	\end{enumerate}
	If $\pi$ is the universal representation we say that $\mathfrak{S}$ is \textit{good}.
\end{definition}

\begin{definition}\label{main_defn}
	Let $\mathfrak{S}=\left\{A=A_0 \to A_1 \to ... \to A_n \to ...\right\} \in \mathfrak{FinAlg}$ be  an algebraical  finite covering sequence. Let $\pi: \widehat{A} \to B\left(\H \right)$ be a good representation.    A connected component $\widetilde{A}_\pi \subset \overline{A}_\pi$  is said to be the {\it inverse noncommutative limit of $\downarrow\mathfrak{S}$ (with respect to $\pi$)}. The $\widetilde{A}_\pi$-invariant group $G_\pi$  is said to be the {\it  covering transformation group of $\mathfrak{S}$} ({\it with respect to $\pi$}).  The triple $\left(A, \widetilde{A}_\pi, G_\pi\right)$ is said to be the  {\it infinite noncommutative covering} of $\mathfrak{S}$  ({\it with respect to $\pi$}). 
	We will use the following notation 
	\begin{equation*}
	\begin{split}
	\varprojlim_\pi \downarrow \mathfrak{S}\stackrel{\mathrm{def}}{=}\widetilde{A}_\pi,\\
	G\left(\widetilde{A}_\pi~|~ A\right)\stackrel{\mathrm{def}}{=}G_\pi.
	\end{split}
	\end{equation*}	
	If $\pi$ is the universal representation then "with respect to $\pi$" is dropped and we will write $\left(A, \widetilde{A}, G\right)$, $~\varprojlim \downarrow \mathfrak{S}\stackrel{\mathrm{def}}{=}\widetilde{A}$ and  $ G\left(\widetilde{A}~|~ A\right)\stackrel{\mathrm{def}}{=}G$.
\end{definition}

\section{Quantization of topological coverings}\label{top_chap}

\paragraph*{} The described in the Sections \ref{ff_c_sec} and \ref{bas_constr} theory is motivated by a  quantization of topological coverings. Following theorem provides a purely algebraic construction of topological finite-fold coverings. 
\begin{theorem}\label{comm_fin_thm}\cite{ivankov:qnc}
	If $\mathcal X$, $\widetilde{\mathcal X}$  are  locally compact spaces, and  $\pi: \widetilde{\mathcal X}\to \mathcal X$ is a surjective continuous map, then following conditions are equivalent:
	\begin{enumerate}
		\item [(i)] The map $\pi: \widetilde{\mathcal X}\to \mathcal X$ is a finite-fold covering with a compactification,
		\item[(ii)] There is a natural  noncommutative finite-fold covering $\left(C_0\left(\mathcal  X \right), C_0\left(\widetilde{\mathcal X} \right), G    \right)$.
	\end{enumerate}
\end{theorem}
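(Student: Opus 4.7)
The proof plan is to split the equivalence into the two implications and reduce each to the compact case handled by Pavlov--Troitsky (Theorem \ref{pavlov_troisky_thm}), linking the noncommutative side to the commutative side via Gelfand--Na\u{\i}mark (Theorem \ref{gelfand-naimark}) and treating the passage from locally compact to compact through compactifications and essential ideals as in Definition \ref{fin_def}.

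For (i)$\Rightarrow$(ii), I would start by fixing a compactification $\widetilde{\mathcal X}\hookto\widetilde{\mathcal Y}$ (with induced $\mathcal X\hookto \mathcal Y$) such that $\pi$ extends to a finite-fold covering $\widetilde\pi\colon\widetilde{\mathcal Y}\to\mathcal Y$ of compact Hausdorff spaces. Set $B=C(\mathcal Y)$, $\widetilde B=C(\widetilde{\mathcal Y})$, $A=C_0(\mathcal X)$, $\widetilde A=C_0(\widetilde{\mathcal X})$; $A$ and $\widetilde A$ are essential ideals in $B$ and $\widetilde B$ respectively. Let $G=G(\widetilde{\mathcal Y}\mid\mathcal Y)$ act by pullback; classical deck-transformation theory gives $B=\widetilde B^{G}$ and the action is non-degenerate and involutive. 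The converse (easy) direction of Pavlov--Troitsky, combined with the standard local triviality of finite-fold coverings, makes $\widetilde B$ a finitely generated projective Hilbert $B$-module, so $(B,\widetilde B,G)$ is an unital noncommutative finite-fold covering in the sense of Definition \ref{fin_def_uni}. The remaining point is to verify the ideal identity
\begin{equation*}
\widetilde A=\bigl\{a\in\widetilde B\;\big|\;\langle\widetilde B,a\rangle_{\widetilde B}\in A\bigr\},
\end{equation*}
which on the topological side reads: a function $a\in C(\widetilde{\mathcal Y})$ vanishes on $\widetilde{\mathcal Y}\setminus\widetilde{\mathcal X}$ if and only if $\sum_{g\in G}g(\bar b\, a)$ vanishes on $\mathcal Y\setminus\mathcal X$ for every $b\in C(\widetilde{\mathcal Y})$. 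Using $\widetilde{\mathcal X}=\widetilde\pi^{-1}(\mathcal X)$ and choosing $b$ supported near a chosen point, this is a pointwise computation on the fibres of $\widetilde\pi$.

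For (ii)$\Rightarrow$(i), I would start from the triple $(C_0(\mathcal X),C_0(\widetilde{\mathcal X}),G)$ and invoke the unital covering $(B,\widetilde B,G)$ given by condition (a)--(b) of Definition \ref{fin_def}. Since $C_0(\mathcal X)\subset B$ and $C_0(\widetilde{\mathcal X})\subset\widetilde B$ are essential ideals with $B,\widetilde B$ unital commutative $C^*$-algebras, Gelfand--Na\u{\i}mark writes $B=C(\mathcal Y)$, $\widetilde B=C(\widetilde{\mathcal Y})$ for compact Hausdorff $\mathcal Y\supset\mathcal X$ and $\widetilde{\mathcal Y}\supset\widetilde{\mathcal X}$, and $\mathcal X$, $\widetilde{\mathcal X}$ are open dense subspaces. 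The $G$-action on $\widetilde B$ corresponds to a free action of $G$ on $\widetilde{\mathcal Y}$ (freeness from non-degeneracy plus finite generation of $\widetilde B$ over $B$), with quotient $\mathcal Y$, and the finitely generated projective module hypothesis plus the Pavlov--Troitsky theorem gives that $\widetilde\pi\colon\widetilde{\mathcal Y}\to\mathcal Y$ is a finite-fold covering. It then remains to show $\widetilde\pi^{-1}(\mathcal X)=\widetilde{\mathcal X}$, which follows from the identity (\ref{wta_eqn}): on one side $\widetilde\pi^{-1}(\mathcal X)\subset\widetilde{\mathcal X}$ by pulling back functions, and on the other side (\ref{wta_eqn}) forces any point of $\widetilde{\mathcal X}$ to project into $\mathcal X$ by contradiction via a bump function.

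The principal obstacle is the careful bookkeeping around the ideal condition \eqref{wta_eqn}: on the topological side it must be shown to be exactly equivalent to the condition $\widetilde{\mathcal X}=\widetilde\pi^{-1}(\mathcal X)$, i.e.\ that $\widetilde{\mathcal X}$ is saturated with respect to the covering. This requires some care because a priori a compactification of $\widetilde{\mathcal X}$ and one of $\mathcal X$ need not be compatible with $\pi$, so one has to arrange (or detect, in the (ii)$\Rightarrow$(i) direction) that the chosen compactifications of the two spaces fit into a commutative square in which the covering map extends. Once this fibrewise/saturation statement is established, the rest of the argument is the direct translation between Hilbert module structure and covering data supplied by Theorem \ref{pavlov_troisky_thm} and Gelfand--Na\u{\i}mark.
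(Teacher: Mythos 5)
There is nothing in this paper to compare your argument against: Theorem \ref{comm_fin_thm} is imported from \cite{ivankov:qnc} without proof, and even the definition of ``finite-fold covering with a compactification'' is explicitly deferred to that reference. So your proposal can only be judged against the way the paper \emph{uses} the result, and on that score your outline is the natural one and matches the template the paper itself follows in the analogous Theorem \ref{fol_fin_thm}: choose unital algebras $B=C(\mathcal Y)$, $\widetilde B=C(\widetilde{\mathcal Y})$ containing $C_0(\mathcal X)$, $C_0(\widetilde{\mathcal X})$ as essential ideals, get finite generation of $\widetilde B$ over $B$ from a partition of unity subordinate to evenly covered sets (exactly the frame construction of \ref{fol_frame_constr}), projectivity from Kasparov stabilization, and then identify the ideal singled out by \eqref{wta_eqn} by a fibrewise computation. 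Your observation that \eqref{wta_eqn} translates precisely into the saturation condition $\widetilde{\mathcal X}=\widetilde\pi^{-1}(\mathcal X)$ is the right key point, and the computation with $b=a$ (so that $\sum_{g}g(|a|^2)$ vanishes at $y$ iff $a$ vanishes on the whole fibre over $y$) does carry it.

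Two gaps are worth naming. First, the identity $B=\widetilde B^{G}$ in your (i)$\Rightarrow$(ii) direction holds only for \emph{regular} (Galois) coverings; for a general finite-fold covering the deck group need not act transitively on fibres and $\widetilde B^{G}$ is strictly larger than $B$. Since Definition \ref{fin_def_uni} requires $A=\widetilde A^{G}$, regularity must either be built into the definition of ``covering with a compactification'' or be proved; you assert it via ``classical deck-transformation theory,'' which does not supply it. Second, Theorem \ref{pavlov_troisky_thm} as quoted requires compact Hausdorff \emph{connected} spaces, whereas the spaces here are only assumed locally compact; in the (ii)$\Rightarrow$(i) direction you would need to either reduce to connected components or invoke a stronger version of that theorem. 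Your claim that freeness of the $G$-action follows from non-degeneracy is also not right as stated (non-degeneracy only rules out a globally trivial action of each $g$), but this is harmless because freeness is a consequence of the covering property delivered by Pavlov--Troitsky rather than a prerequisite for it.
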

\begin{remark}
	The definition of a covering with a compactification is presented in \cite{ivankov:qnc}.
\end{remark}
\begin{empt}\label{top_inf_constr}
An algebraic construction of infinite coverings can be given by an "infinite composition" of finite ones. Suppose that there is a sequence
\begin{equation}
\mathfrak{S}_\mathcal{X} = \left\{\mathcal{X}_0 \xleftarrow{}... \xleftarrow{} \mathcal{X}_n \xleftarrow{} ... \right\}
\end{equation}
of topological spaces and finite-fold coverings, and let us consider a following diagram
\newline
\begin{tikzpicture}
\matrix (m) [matrix of math nodes,row sep=3em,column sep=4em,minimum width=2em]
{
	\widetilde{\mathcal{X}}	&   &  & \\
	\mathcal{X}_0	& \mathcal{X}_1 &  \mathcal{X}_2 & \dots \\};
\path[-stealth]
(m-1-1) edge node [left] {} (m-2-1)
(m-1-1) edge node [left] {} (m-2-2)
(m-1-1) edge node [left] {} (m-2-3)
(m-1-1) edge node [left] {} (m-2-4)
(m-2-2) edge node [left] {} (m-2-1)
(m-2-4) edge node [left] {} (m-2-3)
(m-2-3) edge node [left] {} (m-2-2);
\end{tikzpicture}
\newline
where all arrows are coverings. It is proven in \cite{ivankov:qnc} that there is "minimal" $\widetilde{\mathcal X}$ which satisfies to the above diagram. This minimal  $\widetilde{\mathcal X}$ is said to be the \textit{topological inverse limit} of $\mathfrak{S}_\mathcal{X} = \left\{\mathcal{X}_0 \xleftarrow{}... \xleftarrow{} \mathcal{X}_n \xleftarrow{} ... \right\}$ and it is denoted by $\varprojlim\downarrow \mathfrak{S}_{\mathcal{X}}$. The topological inverse limits are fully described in \cite{ivankov:qnc}. Any inverse limit yields an infinite topological covering $\varprojlim\downarrow \mathfrak{S}_{\mathcal{X}}\to \mathcal X$. Denote by $G\left(\varprojlim \downarrow \mathfrak{S}_{\mathcal{X}}~|~ \mathcal X\right)$ the group of covering transformations of the covering $\varprojlim \downarrow \mathfrak{S}_{\mathcal{X}}\to\mathcal X$. The following theorem gives an algebraic construction of the topological inverse limit.
\end{empt}

\begin{theorem}\label{comm_main_thm}\cite{ivankov:qnc}
	If $\mathfrak{S}_{\mathcal X} = \left\{\mathcal{X} = \mathcal{X}_0 \xleftarrow{}... \xleftarrow{} \mathcal{X}_n \xleftarrow{} ...\right\} $ is the sequence of topological spaces and coverings and
	$$\mathfrak{S}_{C_0\left(\mathcal{X}\right)}=
	\left\{C_0(\mathcal{X})=C_0(\mathcal{X}_0)\to ... \to C_0(\mathcal{X}_n) \to ...\right\} \in \mathfrak{FinAlg}$$ is an algebraical  finite covering sequence then following conditions hold:
	\begin{enumerate}
		\item [(i)] $\mathfrak{S}_{C_0\left(\mathcal{X}\right)}$ is good,
		\item[(ii)] There are  isomorphisms:

		\begin{itemize}
			\item $\varprojlim \downarrow \mathfrak{S}_{C_0\left(\mathcal{X}\right)} \approx C_0\left(\varprojlim \downarrow \mathfrak{S}_{\mathcal X}\right)$;
			\item $G\left(\varprojlim \downarrow \mathfrak{S}_{C_0\left(\mathcal{X}\right)}~|~ C_0\left(\mathcal X\right)\right) \approx G\left(\varprojlim \downarrow \mathfrak{S}_{\mathcal{X}}~|~ \mathcal X\right)$.
		\end{itemize}
	\end{enumerate}
	
\end{theorem}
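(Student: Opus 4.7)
The plan is to realize the whole construction concretely on the topological inverse limit $\widetilde{\mathcal X} \stackrel{\mathrm{def}}{=} \varprojlim\downarrow\mathfrak{S}_{\mathcal X}$ and then identify special/weakly special elements with compactly supported continuous functions on $\widetilde{\mathcal X}$. I start by choosing the universal representation of $\widehat A = \varinjlim C_0(\mathcal X_n)$ and noting that the natural pullback maps $C_0(\mathcal X_n)\to C_b(\widetilde{\mathcal X})$ assemble to an injective $*$-homomorphism $\widehat A \hookrightarrow C_b(\widetilde{\mathcal X})$. By \ref{top_inf_constr} the group $\widehat G=\varprojlim G(\mathcal X_n|\mathcal X)$ acts freely and properly discontinuously on $\widetilde{\mathcal X}$, so it acts unitarily on the GNS Hilbert space of any $\widehat G$-invariant state, which makes the universal representation equivariant in the sense of Definition \ref{equiv_act_defn}.

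Next I characterize special elements. If $\overline a \in C_c(\widetilde{\mathcal X})_+\subset B(\H)$ has compact support $K$, then proper discontinuity gives, for every $n$, only finitely many $g\in\ker(\widehat G\to G(\mathcal X_n|\mathcal X))$ with $gK$ meeting any given compact set of $\mathcal X_n$ (lifted back). Hence
\[
a_n=\sum_{g\in\ker(\widehat G\to G(\mathcal X_n|\mathcal X))} g\overline a
\]
converges strongly to a function that factors through $\widetilde{\mathcal X}\to\mathcal X_n$, i.e.\ lies in $C_0(\mathcal X_n)$. The same argument handles $b_n,c_n,d_n$ for any $z\in \widehat A$ (the pullback of $z$ has $\widehat G$-invariant absolute value, and $z\overline a z^*$ still has compact support). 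The key inequality \eqref{square_condition_equ} becomes transparent in the commutative setting: $b_n^2-c_n = \sum_{g\neq g'} g(z\overline a z^*)\cdot g'(z\overline a z^*)$, and for $n$ large enough the supports of distinct translates $g(z\overline a z^*)$ and $g'(z\overline a z^*)$ with $g,g'\in\ker(\widehat G\to G(\mathcal X_n|\mathcal X))$ become disjoint (again by proper discontinuity applied to the compact set $\supp(z\overline a z^*)$), forcing all cross-terms to vanish. Thus every element of $C_c(\widetilde{\mathcal X})_+$ is special.

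Then I identify the algebras. Weakly special elements span a $*$-subalgebra containing all compactly supported continuous functions on each connected component of $\widetilde{\mathcal X}$, and conversely any weakly special element lies in $C_b(\widetilde{\mathcal X})$ via Corollary \ref{special_cor}, is dominated by its defining sums $a_n\in C_0(\mathcal X_n)$, and by Lemma \ref{stong_conv_inf_lem} is the strong infimum of those, hence vanishes at infinity on $\widetilde{\mathcal X}$. This identifies $\overline A$ with $C_0(\widetilde{\mathcal X}_{\mathrm{disc}})$, where $\widetilde{\mathcal X}_{\mathrm{disc}}$ is the possibly disconnected topological inverse limit; a maximal irreducible subalgebra $\widetilde A$ then corresponds to $C_0(\widetilde{\mathcal X})$ for the connected component $\widetilde{\mathcal X}$, and $G_\pi = G(\widetilde{\mathcal X}|\mathcal X)$ is the stabilizer of that component in $\widehat G$.

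Finally I verify goodness of $\mathfrak S_{C_0(\mathcal X)}$. Condition (a) is the injectivity of $\varinjlim C_0(\mathcal X_n)\to M(C_0(\widetilde{\mathcal X}))=C_b(\widetilde{\mathcal X})$, which holds because the projections $\widetilde{\mathcal X}\to\mathcal X_n$ are surjective. Condition (b) reproduces the decomposition of $\widetilde{\mathcal X}_{\mathrm{disc}}$ into the $\widehat G/G_\pi$-orbit of connected components: the algebraic sum $\bigoplus_{g\in J}gC_0(\widetilde{\mathcal X})$ is dense in $C_0(\widetilde{\mathcal X}_{\mathrm{disc}})$. Condition (c) — that $G_\pi\twoheadrightarrow G(\mathcal X_n|\mathcal X)$ for every $n$ — is exactly the standard fact, proved in \cite{ivankov:qnc} for topological inverse limits, that covering transformations of $\widetilde{\mathcal X}\to\mathcal X$ surject onto each finite-level covering transformation group. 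Together these prove (i) and give the isomorphisms in (ii). The main obstacle in this program is the verification of \eqref{square_condition_equ} together with the careful identification of the connected component: one must show that no element beyond $C_0(\widetilde{\mathcal X})$ sneaks into the maximal irreducible subalgebra, which is what forces the use of the compactly supported positive functions (rather than all of $C_b(\widetilde{\mathcal X})$) as the generating set of special elements.
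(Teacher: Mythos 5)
First, a structural point: the paper does not prove Theorem \ref{comm_main_thm} at all --- it is quoted verbatim from \cite{ivankov:qnc} --- so there is no in-paper argument to compare yours against, and I can only assess the proposal on its own terms.

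Your forward direction is essentially right: the argument that every $\overline a\in C_c(\widetilde{\mathcal X})_+$ is special --- strong convergence of the orbit sums by proper discontinuity, and vanishing of the cross terms in $b_n^2-c_n$ once $\ker\left(\widehat G\to G\left(\mathcal X_n~|~\mathcal X\right)\right)$ meets the finite set $\left\{g~|~ g\,\supp\left(z\overline a z^*\right)\cap \supp\left(z\overline a z^*\right)\neq\emptyset\right\}$ only in $e$ --- is the correct mechanism, and it is the same mechanism this paper deploys in Lemma \ref{fol_com_inc_lem} for foliations. The genuine gap is in the converse inclusion. You assert that a weakly special element ``lies in $C_b(\widetilde{\mathcal X})$ via Corollary \ref{special_cor}''; but that corollary only places it in the enveloping von Neumann algebra $\widehat A''$, which is far larger than $C_b$ of the inverse limit. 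The subsequent step --- it ``is the strong infimum of the $a_n$, hence vanishes at infinity'' --- does not follow: a decreasing strong infimum of continuous functions is merely upper semicontinuous, and domination by elements of $C_0(\mathcal X_n)$ gives neither continuity on $\widetilde{\mathcal X}$ nor vanishing at infinity. This is exactly the point where condition (c) of Definition \ref{special_el_defn}, i.e.\ the estimate \eqref{square_condition_equ}, must be brought to bear to exclude discontinuous infima; the paper records the needed statement as Lemma \ref{comm_main_lem} (again imported from \cite{ivankov:qnc}), but you never invoke or reprove it. You do flag this as ``the main obstacle'' in your closing sentence, which is honest, but the body of the argument treats it as already done. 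Until that step is supplied, the identification $\overline A\cong C_0\left(\widetilde{\mathcal X}_{\mathrm{disc}}\right)$ --- and with it both isomorphisms in (ii) and condition (b) of goodness --- remains unproved.
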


\section{Operator algebras of foliations}\label{foliations}

\begin{definition}
	
	Let $M$ be a smooth manifold and $TM$ its tangent bundle, so that
	for each $x \in M$, $T_x M$ is the tangent space of $M$ at $x$. A
	smooth subbundle $\mathcal{F}$ of $TM$ is called {\it integrable} if and only if one of
	the following equivalent conditions is satisfied:
	
	\smallskip
	
	\begin{enumerate}
		
		\item[(a)] Every $x \in M$ is contained in a submanifold $W$ of $M$ such that
		$$
		T_y (W) = \mathcal{F}_y \qquad \forall \, y \in W \, ,
		$$
		
		\smallskip
		
		\item[(b)] Every $x \in M$ is in the domain $U \subset M$ of a
		submersion $p : U \to {\mathbb R}^q$ ($q = {\rm codim} \, \mathcal{F}$) with
		$$
		\mathcal{F}_y = {\rm Ker} (p_*)_y \qquad \forall \, y \in U \, ,
		$$
		
		\smallskip
		\item[(c)] $C^{\infty} \left( \mathcal{F}\right)  = \{ X \in C^{\infty} \left(TM\right) \, , \ X_x \in
		\mathcal{F}_x \quad \forall \, x \in M \}$ is a Lie algebra,
		
		\smallskip
		
		\item[(d)] The ideal $J\left( \mathcal{F}\right) $ of smooth exterior differential forms which
		vanish on $\mathcal{F}$ is stable by exterior differentiation.
	\end{enumerate}
	
\end{definition}

A foliation of $M$ is given by an integrable subbundle $\mathcal{F}$ of $TM$.
The leaves of the foliation $\left(M , \mathcal F\right)$ are the maximal connected
submanifolds $L$ of $M$ with $T_x (L) = \mathcal{F}_x $, $\forall \, x \in L$,
and the partition of $M$ in leaves $$M = \cup
L_{\alpha}\,,\quad\alpha \in X$$ is characterized geometrically by
its ``local triviality'': every point $x \in M$ has a neighborhood
$\mathcal U$ and a system of local coordinates
$(x^j)_{j = 1 , \ldots , \dim V}$ called
{\it foliation charts}, so
that the partition of $\mathcal U$ in connected components of
leaves corresponds to the partition of 
\begin{equation*}
{\mathbb
	R}^{\dim M} = {\mathbb R}^{\dim \mathcal F} \times {\mathbb R}^{\text{codim}
	\, \mathcal F}
\end{equation*}
in the parallel affine subspaces 
$
{\mathbb R}^{\dim \mathcal F}
\times {\rm pt}$.
The corresponding foliation will be denoted by
\begin{equation}\label{fol_chart_eqn}
\left(\R^n, \mathcal{F}_p \right) 
\end{equation}
where $p = \dim   \mathcal{F}_p$.
To each foliation $(M, \mathcal{F})$ is canonically associated a $C^*$- algebra
$C^*_r (M, ~\mathcal{F})$ which encodes the topology of the space of leaves.  To
take this into account one first constructs a manifold $\mathcal G$, $\dim
\, \mathcal G = \dim \,M + \dim \,\mathcal F$, called the graph (or \textit{holonomy groupoid})
of the foliation, which refines the equivalence relation coming from
the partition of $M$ in leaves $M = \cup L_{\alpha}$. 
An element $\gamma$ of $\mathcal G$ is given by two points $x = s(\gamma)$,
$y = r(\gamma)$ of $M$ together with an equivalence class of smooth
paths: $\gamma (t)\in M$, $t \in [0,1]$; $\gamma (0) = x$, $\gamma
(1) = y$, tangent to the bundle $\mathcal{F}$ ( i.e. with $\dot\gamma (t)
\in \mathcal{F}_{\gamma (t)}$, $\forall \, t \in {\mathbb R}$) up to the
following equivalence: $\gamma_1$ and $\gamma_2$ are equivalent if and only if
the {\it holonomy} of the path $\gamma_2 \circ \gamma_1^{-1}$ at the
point $x$ is the {\it identity}. The graph $\mathcal G$ has an obvious
composition law. For $\gamma , \gamma' \in G$, the composition
$\gamma \circ \gamma'$ makes sense if $s(\gamma) = r(\gamma')$. If
the leaf $L$ which contains both $x$ and $y$ has no holonomy, then
the class in $\mathcal G$ of the path $\gamma (t)$ only depends on the pair
$(y,x)$. In general, if one fixes $x = s(\gamma)$, the map from $\mathcal G_x
= \{ \gamma , s(\gamma) = x \}$ to the leaf $L$ through $x$, given
by $\gamma \in \mathcal G_x \mapsto y = r(\gamma)$, is the holonomy covering
of $L$.
Both maps $r$ and $s$ from the manifold $\mathcal G$ to $M$ are smooth
submersions and the map $(r,s)$ to $M \times M$ is an immersion
whose image in $M \times M$ is the (often singular) subset
\begin{equation*}\label{subset}
\{ (y,x)\in M \times M: \, \text{ $y$ and $x$ are on the same leaf}\}.
\end{equation*}
We
assume, for notational convenience, that the manifold $\mathcal G$ is
Hausdorff, but as this fails to be the case in very
interesting examples I shall refer to \cite{connes:foli_survey} for the removal of this hypothesis.  For
$x\in M$ one lets $\Omega_x^{1/2}$ be the one dimensional complex
vector space of maps from the exterior power $\wedge^k \,  \mathcal{F}_x$, $k =
\dim F$, to ${\mathbb C}$ such that
$$
\rho \, (\lambda \, v) = \vert \lambda \vert^{1/2} \, \rho \, (v)
\qquad \forall \, v \in \wedge^k \,  \mathcal{F}_x \, , \quad \forall \,
\lambda \in {\mathbb R} \, .
$$
Then, for $\gamma \in\mathcal G$, one can identify $\Omega_{\gamma}^{1/2}$ with the one
dimensional complex vector space $\Omega_y^{1/2} \otimes
\Omega_x^{1/2}$, where $\gamma : x \to y$. In other words
$$
\Omega_{\mathcal G}^{1/2}=\, r^*(\Omega_M^{1/2})\otimes s^*(\Omega_M^{1/2})\,.
$$
Of course the bundle $\Omega_M^{1/2}$ is trivial on $M$, and we
could choose once and for all  a trivialisation $\nu$ turning
elements of $C_c^{\infty} (\mathcal G , \Omega^{1/2})$ into functions.
Let us
however stress that the use of half densities makes all the
construction completely canonical.
For $f,g \in C_c^{\infty} (\mathcal G , \Omega^{1/2})$, the convolution
product $f * g$ is defined by the equality
$$
(f * g) (\gamma) = \int_{\gamma_1 \circ \gamma_2 = \gamma}
f(\gamma_1) \, g(\gamma_2) \, .
$$
This makes sense because, for fixed $\gamma : x \to y$ and fixing $v_x
\in \wedge^k \,  \mathcal{F}_x$ and $v_y \in \wedge^k \,  \mathcal{F}_y$, the product
$f(\gamma_1) \, g(\gamma_1^{-1} \gamma)$ defines a $1$-density on
$G^y = \{ \gamma_1 \in G , \, r (\gamma_1) = y \}$, which is smooth
with compact support (it vanishes if $\gamma_1 \notin\supp f$),
and hence can be integrated over $G^y$ to give a scalar, namely $(f * g)
(\gamma)$ evaluated on $v_x , v_y$.
The $*$ operation is defined by $f^* (\gamma) =
\overline{f(\gamma^{-1})}$,  i.e. if $\gamma : x \to y$ and
$v_x \in \wedge^k \, \mathcal{F}_x$, $v_y \in \wedge^k \, \mathcal{F}_y$ then $f^*
(\gamma)$ evaluated on $v_x , v_y$ is equal to
$\overline{f(\gamma^{-1})}$ evaluated on $v_y , v_x$. We thus get a
$*$-algebra $C_c^{\infty} (\mathcal G , \Omega^{1/2})$. For each leaf $L$ of
$(M, \mathcal{F})$ one has a natural representation of this $*$-algebra on the
$L^2$ space of the holonomy covering $\tilde L$ of $L$. Fixing a
base point $x \in L$, one identifies $\tilde L$ with $\mathcal G_x = \{
\gamma , s(\gamma) = x \}$ and defines
\begin{equation}\label{fol_repr}
(\rho_x (f) \, \xi) \, (\gamma) = \int_{\gamma_1 \circ \gamma_2 =
	\gamma} f(\gamma_1) \, \xi (\gamma_2) \qquad \forall \, \xi \in L^2
(\mathcal G_x),\
\end{equation}

where $\xi$ is a square integrable half density on $\mathcal G_x$. Given
$\gamma : x \to y$ one has a natural isometry of $L^2 (\mathcal G_x)$ on $L^2
(G_y)$ which transforms the representation $\rho_x$ in $\rho_y$.
By definition $C^*_r (M, \mathcal{F})$ is the $C^*$-algebra completion of
$C_c^{\infty} (\mathcal G , \Omega^{1/2})$ with the norm 
\begin{equation}\label{fol_norm_eqn}
\Vert f \Vert =
\sup_{x \in M} \, \Vert \rho_x (f) \Vert\,.
\end{equation}
Denote by $\mathcal G(M, \mathcal{F})$ the foliation groupoid of  $(M, \mathcal{F})$.

\begin{example}\label{fol_tor_exm} \emph{Linear foliation on torus.}
	Consider a vector field $\tilde{X}$ on $\R^2$ given by
	\[
	\tilde{X}=\alpha\frac{\partial}{\partial
		x}+\beta\frac{\partial}{\partial y}
	\]
	with constant $\alpha$ and $\beta $. Since $\tilde{X}$ is
	invariant under all translations, it determines a vector field $X$
	on the two-dimensional torus ${\T}^2={\R}^2/{\Z}^2$. The vector
	field $X$ determines a foliation $\mathcal{F}$ on ${\T}^2$. The leaves of
	$\mathcal{F}$ are the images of the parallel lines
	$\tilde{L}=\{(x_0+t\alpha, y_0+t\beta): t\in\R\}$ with the slope
	$\theta=\beta/\alpha $ under the projection $\R^2\to \T^2$.
	In the case when $\theta$ is rational, all leaves of $\mathcal{F}$ are
	closed and are circles, and the foliation $\mathcal{F}$ is determined by
	the fibers of a fibration $\T^2\to S^1$. In the case when $\theta$
	is irrational, all leaves of $\mathcal{F}$ are everywhere dense in $\T^2$. Denote by $\left(\T^2, \mathcal{F}_\th \right)$ this foliation. 
\end{example}

\section{Strong Morita equivalence}\label{s:morita}
\paragraph*{}
The notion of the strong Morita
equivalence was introduced by Rieffel.

\begin{defn}\cite{Rieffel74}
	Let $A$ and $B$ be $C^*$-algebras. An $A$-$B$-equivalence bimodule
	is an $A$-$B$-bimodule $X$, endowed with $A$-valued and $B$-valued
	inner products $\langle \cdot, \cdot\rangle_A$ and $\langle \cdot,
	\cdot\rangle_B$ accordingly, such that $X$ is a right Hilbert
	$B$-module and a left Hilbert $A$-module with respect to these
	inner products, and, moreover,
	\begin{enumerate}
		\item $\langle x,y\rangle_A z=x\langle y, z\rangle_B$  for any
		$x, y, z\in X$;
		\item The set $\langle X,X\rangle_A$ generates a dense subset in
		$A$, and the set $\langle X,X\rangle_B$ generates a dense subset in
		$B$.
	\end{enumerate}
	
	We call algebras $A$ and $B$ \textit{strongly Morita equivalent}, if there
	is an $A$-$B$-equivalence bimodule.
\end{defn}
\begin{example}\label{fol_trivial_exm}\cite{kord:foli}
	If $\mathcal{F}$ is a simple foliation given by a submersion $M\to B$, then
	the $C^*$-algebra $C^*_r(M,\mathcal{F})$ is strongly Morita equivalent to
	the $C^*$-algebra $C_0(B)$.
\end{example}

The following theorem yields a relation between the strong Morita
equivalence and the  stable equivalence.

\begin{thm}\label{MoritaK_thm}\cite{BroGreRie}
	Let $A$ and $B$ are $C^*$-algebras with countable approximate
	units. Then these algebras are strongly Morita equivalent if and
	only if they are stably equivalent, i.e. $A\otimes \mathcal{K} \cong
	B\otimes \mathcal{K}$, where $\mathcal{K}$ denotes the algebra of compact
	operators in a separable Hilbert space.
\end{thm}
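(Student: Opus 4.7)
My plan is to handle the two directions separately, with the $(\Leftarrow)$ direction being essentially formal and the $(\Rightarrow)$ direction resting on a stabilisation theorem of L.\,G.~Brown.

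For the implication stable equivalence $\Rightarrow$ strong Morita equivalence, I would first observe that for any $C^*$-algebra $A$ with a countable approximate unit, the standard Hilbert module $\ell^2(A)$ implements a strong Morita equivalence between $A$ and $\mathcal{K}(\ell^2(A)) \cong A \otimes \mathcal{K}$; the left $A\otimes\mathcal{K}$-valued inner product is just $\xi\rangle\langle\zeta$, while the right $A$-valued inner product is \eqref{st_hilb_end}. Fullness of both inner products follows from the approximate unit. Since strong Morita equivalence is transitive (one composes bimodules via the interior tensor product over the middle algebra), the chain $A \sim A\otimes\mathcal{K} \cong B\otimes\mathcal{K} \sim B$ gives the required equivalence bimodule between $A$ and $B$.

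For the converse, given an $A$-$B$-equivalence bimodule $X$, I would form the linking $C^*$-algebra
\begin{equation*}
L(X) \;=\; \begin{pmatrix} A & X \\ \bar{X} & B \end{pmatrix},
\end{equation*}
whose multiplication uses the two inner products and the bimodule structure, and whose norm comes from a faithful representation on $X \oplus B$ (viewed as a Hilbert $B$-module). The diagonal matrix units $p = \begin{pmatrix} 1 & 0 \\ 0 & 0 \end{pmatrix}$ and $q = \begin{pmatrix} 0 & 0 \\ 0 & 1 \end{pmatrix}$, regarded as elements of the multiplier algebra $M(L(X))$, satisfy $pL(X)p \cong A$ and $qL(X)q \cong B$. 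The fullness hypotheses on the two inner products translate exactly into the statement that $p$ and $q$ are \emph{full} projections in $M(L(X))$, in the sense that $L(X)\,p\,L(X)$ and $L(X)\,q\,L(X)$ are dense in $L(X)$. Moreover, countable approximate units for $A$ and $B$ assemble into one for $L(X)$, so $L(X)$ is $\sigma$-unital.

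At this point I would invoke Brown's stabilisation theorem: if $D$ is a $\sigma$-unital $C^*$-algebra and $e \in M(D)$ is a full projection, then $eDe \otimes \mathcal{K} \cong D \otimes \mathcal{K}$. Applying this to $D = L(X)$ with the two projections $p$ and $q$ yields
\begin{equation*}
A \otimes \mathcal{K} \;\cong\; pL(X)p \otimes \mathcal{K} \;\cong\; L(X) \otimes \mathcal{K} \;\cong\; qL(X)q \otimes \mathcal{K} \;\cong\; B \otimes \mathcal{K},
\end{equation*}
which is the desired stable isomorphism. The main obstacle is of course Brown's theorem itself; its proof requires building explicit partial isometries in $M(D \otimes \mathcal{K})$ out of the approximate unit together with the density $\overline{DeD} = D$, and this is where $\sigma$-unitality is essential. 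Everything else is bookkeeping about linking algebras and multiplier projections.
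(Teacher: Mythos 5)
The paper offers no proof of this statement at all---it is imported verbatim as a cited result of Brown, Green and Rieffel \cite{BroGreRie}---so there is nothing internal to compare your argument against. Your outline is precisely the original Brown--Green--Rieffel argument (the easy direction via the equivalence bimodule $\ell^2(A)$ between $A$ and $A\otimes\mathcal{K}$ plus transitivity, and the hard direction via the linking algebra, fullness of the two corner projections, $\sigma$-unitality, and Brown's stabilisation theorem), and it is correct as a sketch, with the only substantive content deferred to Brown's theorem exactly as you acknowledge.
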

\begin{remark}\label{stable_rem}
	The $C^*$-algebra $A\otimes \mathcal{K}$ means the $C^*$-norm completion of the algebraic tensor product of  $A$ and $\mathcal{K}$. In general the  $C^*$-norm completion of algebraic tensor product $A$ and $B$ is not unique, because it depends on the $C^*$-norm on  $A\otimes B$. However in \cite{blackadar:ko} it is proven that $\mathcal K$ is a nuclear $C^*$-algebra, so there is the unique $C^*$-norm on the algebraic tensor product of $A$ and $\mathcal{K}$. Hence there is the unique $C^*$-norm completion $A\otimes \mathcal{K}$.
\end{remark}
\begin{example}\label{fol_trivial_k_exm}\cite{kord:foli}
	If $\mathcal{F}$ is a simple foliation given by a bundle $M\to B$, then following condition holds
\begin{equation}\label{fol_stab_eqn}
C^*_r\left( M,\mathcal{F}\right) \approx C_0\left( B\right) \otimes \mathcal{K}
\end{equation}
(cf. Example \ref{fol_trivial_exm}).

\end{example}


\begin{example} \label{ex:morita}\cite{kord:foli}
	Consider a compact foliated manifold $(M,\mathcal{F})$. As usual, let $\mathcal G$
	denote the holonomy groupoid of $\mathcal{F}$. For any subsets $A,
	B\subset M$, denote
	\[
	\mathcal G^A_B=\{\gamma\in \mathcal G : r(\gamma)\in A, s(\gamma)\in B\}.
	\]
	In particular,
	\[
\mathcal	G^M_T=\{\gamma\in \mathcal G : s(\gamma)\in T\}.
	\]
	If $T$ is a transversal, then $\mathcal G^T_T$ is a submanifold and a
	subgroupoid in $\mathcal G$. Let $C^*_r(\mathcal G^T_T)$ be the reduced
	$C^*$-algebra of this groupoid. As shown in \cite{Hil-Skan83}, if
	$T$ is a complete transversal, then the algebras $C^*_r(\mathcal G)$ and
	$C^*_r(\mathcal G^T_T)$ are strongly Morita equivalent. In particular, this
	implies that
	\[
	C^*_r(\mathcal G) \otimes \mathcal{K} \cong  C^*_r(\mathcal G^T_T) \otimes \mathcal{K}.
	\]
In \cite{Hil-Skan83} it is proven that $	C^*_r(\mathcal G) $ is stable, i.e. $	C^*_r(\mathcal G)  \cong 	C^*_r(\mathcal G) \otimes \mathcal{K}$, so what
 	implies that
 \[
 C^*_r(\mathcal G) \cong  C^*_r(\mathcal G^T_T) \otimes \mathcal{K}.
 \]
\end{example}

\begin{example}\label{ex:Atheta}
	Consider the   linear foliation  $\mathcal{F}_\theta$ on the two-dimensional
	torus $\T^2$ (cf. Example \ref{fol_tor_exm}), where $\theta\in \R$ is a fixed irrational number.
	If we choose the transversal $T$ given by the equation $y=0$, then
	the leaf space of the foliation $\mathcal{F}_\theta$ is identified with
	the orbit space of the $\Z$-action on the circle $S^1=\R/\Z$
	generated by the rotation
	\[
	R_\theta(x)=x+\theta \mod 1,\quad x\in S^1.
	\]
	Elements of the algebra $C^\infty_c(\mathcal G^T_T)$ are determined by
	matrices $a(i,j)$, where the indices $(i,j)$ are arbitrary pairs
	of elements $i$ and $j$ of $T$, lying on the same leaf of $\mathcal{F}$,
	that is, on the same orbit of the $\Z$-action $R_\theta$. Since
	in this case the leafwise equivalence relation on the transversal
	is given by a free group action, the algebra $C^*_r(\mathcal G^T_T)=C\left( \T^2_\theta\right) $
	coincides with the crossed product $C(S^1)\rtimes \Z$ of the
	algebra $C(S^1)$ by the group $\Z$ with respect to the
	$\Z$-action $R_\theta$ on $C(S^1)$. Therefore  every element of $C\left( \T^2_\theta\right)$ is given by
	a power series
	\[
	a=\sum_{n\in\Z}a_nU^n, \quad a_n\in C(S^1),
	\]
	the multiplication is given by
	\[
	(aU^n)(bU^m)=a(b\circ R^{-1}_{n\theta})U^{n+m}
	\]
	and the involution by
	\[
	(aU^n)^*=\bar{a}U^{-n}.
	\]
	The algebra $C(S^1)$ is generated by the function $V$ on $S^1$
	defined as
	\[
	V(x)=e^{2\pi i x}, \quad x\in S^1.
	\]
	Hence, the algebra $C\left(\T^2_\th \right) $ is generated by two elements $U$ and
	$V$, satisfying the relation
	\[
	VU=\lambda U V, \quad \lambda = e^{2\pi i \theta}.
	\]
	Thus, for example, a general element of $C^\infty_c(\mathcal G^T_T)$ can be
	represented as a power series
	\[
	a=\sum_{(n,m)\in\Z^2}a_{nm}U^nV^m,
	\]
	where $a_{nm}\in \SS(\Z^2)$ is a rapidly decreasing sequence
	(that is, for any natural $k$ we have
	$\sup_{(n,m)\in\Z^2}(|n|+|m|)^k|a_{nm}|<\infty$). Since, in the commutative case ($\theta=0$), the above description
	defines the algebra $C^\infty\left( \T^2\right) $ of smooth functions on the two-dimensional
	torus, the $C^*$-norm completion of $C^\infty\left( \T^2_\theta\right) $ is   called the algebra of continuous
	functions on a noncommutative torus and denoted by $C(\T^2_\theta)$. Otherwise from  
	$C^\infty_c\left(\mathcal  G^T_T\right) = C^\infty\left( \T^2_\theta\right) $ and from the Example \ref{ex:morita}  it follows that the $C^*$-algebra $C^*_r\left( \T^2, \mathcal F_\th\right) $  is
	strongly Morita equivalent to $C\left( \T^2_\theta\right) $. From the Theorem \ref{MoritaK_thm} it follows that
\begin{equation}\label{fol_th_eqn}
	C^*_r\left( \T^2, \mathcal F_\th\right)  \approx C\left( \T^2_\theta\right) \otimes \mathcal K. 
\end{equation}
\end{example}

\section{Coverings of stable algebras}
\paragraph*{}
Here we find the relation between noncommutative coverings of $C^*$-algebras and noncommutative coverings of their stabilizations.
\subsection{Finite-fold coverings}
\paragraph*{} If $A$ is a $C^*$-algebra and $\mathcal K= \mathcal K\left( \H\right) $ is an algebra of compact operators then the $C^*$-norm completion of $A \otimes \mathcal K$ is said to be the \textit{stable algebra} of $A$ (cf. Remark \ref{stable_rem}). This completion we denote by   $A \otimes \mathcal K$. Any non-degenerated involutive action of $G$ on $A$ uniquely induces the non-degenerated involutive action  of $G$ on $A\otimes \mathcal K$.

\begin{theorem}\label{stable_fin_cov_thm}
	If $\left(A, \widetilde{A}, G \right)$ is a noncommutative finite-fold covering  then a triple $$\left(A  \otimes \mathcal K, \widetilde{A} \otimes \mathcal K, G \right)$$ is a noncommutative finite-fold covering.
\end{theorem}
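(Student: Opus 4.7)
By Definition \ref{fin_def} the hypothesis supplies unital $C^*$-algebras $B\supset A$ and $\widetilde{B}\supset\widetilde{A}$, with $A$ (resp.\ $\widetilde{A}$) an essential ideal of $B$ (resp.\ $\widetilde{B}$), such that $(B,\widetilde{B},G)$ is an unital noncommutative finite-fold covering and $\widetilde{A}=\{a\in\widetilde{B}\,:\,\langle\widetilde{B},a\rangle_{\widetilde{B}}\subset A\}$. The plan is to verify the three clauses of Definition \ref{fin_def} for the triple $(A\otimes\mathcal{K},\widetilde{A}\otimes\mathcal{K},G)$ by taking as enclosing unital algebras $B':=B\otimes\mathcal{K}^+$ and $\widetilde{B}':=\widetilde{B}\otimes\mathcal{K}^+$, where $\mathcal{K}^+$ is the unitisation of $\mathcal{K}$ (which is nuclear, so the tensor products are unambiguous, cf.\ Remark \ref{stable_rem}), and where the $G$-action on $\widetilde{B}'$ acts trivially on the second factor.

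Clauses (a) and (b) are relatively mechanical. Essentiality of $A\otimes\mathcal{K}$ in $B\otimes\mathcal{K}^+$ (and of $\widetilde{A}\otimes\mathcal{K}$ in $\widetilde{B}\otimes\mathcal{K}^+$) follows from the essentiality of each factor by a slice-map argument. For (b), tensoring a decomposition $\widetilde{B}\oplus P\cong B^n$ with $\mathcal{K}^+$ shows that $\widetilde{B}\otimes\mathcal{K}^+$ is a finitely generated projective Hilbert $B\otimes\mathcal{K}^+$-module; the fixed-point identity $(\widetilde{B}\otimes\mathcal{K}^+)^G=B\otimes\mathcal{K}^+$ follows from the averaging conditional expectation $E_G\otimes\id$ with $E_G(b)=|G|^{-1}\sum_{g\in G}g(b)$; non-degeneracy and involutivity of the $G$-action on $\widetilde{B}'$ are immediate from the corresponding properties on $\widetilde{B}$.

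The technical heart is clause (c), namely the identity
\[
\widetilde{A}\otimes\mathcal{K}=\{\,z\in\widetilde{B}\otimes\mathcal{K}^+ \,:\, \langle\widetilde{B}\otimes\mathcal{K}^+,\,z\rangle_{\widetilde{B}\otimes\mathcal{K}^+}\subset A\otimes\mathcal{K}\,\}.
\]
The inclusion $\subset$ reduces, by linearity and continuity, to simple tensors: for $z=a\otimes k$ with $a\in\widetilde{A}$ and $y=b\otimes\ell$ one has $\langle y,z\rangle=\langle b,a\rangle_{\widetilde{B}}\otimes \ell^*k$, and both factors lie in the required ideals by clause (c) for $(A,\widetilde{A},G)$ and because $\mathcal{K}$ is an ideal of $\mathcal{K}^+$. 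For the reverse inclusion I would proceed in two stages. First, decompose $z=z_1+c\otimes 1$ with $z_1\in\widetilde{B}\otimes\mathcal{K}$ and $c\in\widetilde{B}$, and test against $y=d\otimes 1$; since $(d^*\otimes 1)z_1\in \widetilde{B}\otimes\mathcal{K}$, the only contribution of $\langle y,z\rangle$ to the direct summand $\widetilde{B}\otimes\mathbb{C}$ is $\langle d,c\rangle_{\widetilde{B}}\otimes 1$, which must vanish because $A\otimes\mathcal{K}\subset B\otimes\mathcal{K}$; thus $\langle d,c\rangle_{\widetilde{B}}=0$ for every $d\in\widetilde{B}$, and taking $d=c$ forces $c=0$, so $z\in\widetilde{B}\otimes\mathcal{K}$. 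Second, fix matrix units $\{e_{ij}\}$ for $\mathcal{K}$ and denote by $a_{ij}\in\widetilde{B}$ the entries of $z$ (defined via slice maps $\id\otimes\omega_{ij}$); testing against $y=b\otimes e_{pq}$ and then multiplying by $1\otimes e_{j_0 q}$ extracts $\langle b,a_{pj_0}\rangle_{\widetilde{B}}\otimes e_{qq}\in A\otimes\mathcal{K}$, whence a vector-state slice map on the second factor yields $\langle b,a_{pj_0}\rangle_{\widetilde{B}}\in A$ for every $b\in\widetilde{B}$. By clause (c) for $(A,\widetilde{A},G)$ each $a_{ij}$ lies in $\widetilde{A}$, and the finite truncations $(1\otimes p_n)z(1\otimes p_n)\in\widetilde{A}\otimes\mathcal{K}$, with $p_n=\sum_{i\le n}e_{ii}$, approach $z$ in norm, giving $z\in\widetilde{A}\otimes\mathcal{K}$. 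The principal obstacle is this two-stage matrix-unit argument, and in particular the passage from entry-wise membership in $\widetilde{A}$ to membership in the closed subspace $\widetilde{A}\otimes\mathcal{K}$ via the approximate unit.
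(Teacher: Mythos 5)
Your proposal is correct, and its overall skeleton — verifying clauses (a)--(c) of Definition \ref{fin_def} with the enclosing unital algebras $B\otimes\mathcal K^+$ and $\widetilde{B}\otimes\mathcal K^+$ — coincides with the paper's, as it must. The differences are in the execution of (b) and, more substantially, (c). For (b) the paper proves finite generation by expanding elementary tensors over the generators $\widetilde b_j\otimes 1_{\mathcal K^+}$ and then invokes the Kasparov stabilization theorem for projectivity, whereas you tensor a complemented-submodule decomposition $\widetilde B\oplus P\cong B^n$ with $\mathcal K^+$, which delivers finite generation and projectivity in one stroke; both are fine, and yours avoids the slightly glossed-over density step in the paper's extension from algebraic to completed tensors. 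For the reverse inclusion in (c) the paper tests $\widetilde a$ against itself: writing $a=\langle\widetilde a,\widetilde a\rangle_{\widetilde B\otimes\mathcal K^+}=\sum_{g}g(\widetilde a^*\widetilde a)$ and passing to the two quotients $\widetilde B\otimes\mathcal K^+/\widetilde B\otimes\mathcal K$ and $\widetilde B\otimes\mathcal K^+/\widetilde A\otimes\mathcal K^+$, it uses positivity of each summand to conclude that $a\in A\otimes\mathcal K$ forces $\widetilde a$ to lie in both ideals, hence in their intersection $\widetilde A\otimes\mathcal K$. Your stage one (killing the $c\otimes 1$ component by testing with $d=c$) is exactly the paper's first quotient argument in different clothing, but your stage two is genuinely different: instead of the second quotient, you extract matrix entries by slice maps, show each entry lies in $\widetilde A$ via clause (c) of the original covering, and reassemble by norm-convergent corner truncations $(1\otimes p_n)z(1\otimes p_n)$. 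The paper's route is shorter but tacitly relies on the identity $(\widetilde B\otimes\mathcal K)\cap(\widetilde A\otimes\mathcal K^+)=\widetilde A\otimes\mathcal K$, which it does not justify; your route is longer but lands directly in $\widetilde A\otimes\mathcal K$ and makes the approximation explicit, which is the honest price of avoiding that identity. No gap in either stage as far as I can see.
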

\begin{proof}
	From the Definition \ref{fin_def} it follows that following conditions hold:
	\begin{enumerate}
		\item[(a)] 
		There are unital $C^*$-algebras $B$, $\widetilde{B}$  and inclusions 
		$A \subset B$,  $\widetilde{A}\subset \widetilde{B}$ such that $A$ (resp. $B$) is an essential ideal of $\widetilde{A}$ (resp. $\widetilde{B}$),
		\item[(b)] There is an unital  noncommutative finite-fold covering $\left(B ,\widetilde{B}, G \right)$,
		\item[(c)] 
		\begin{equation*}\label{wta_eqn1}
		\widetilde{A} =  \left\{a\in \widetilde{B}  ~|~ \left\langle \widetilde{B} ,a  \right\rangle_{\widetilde{B} } \in A \right\}.
		\end{equation*}
	\end{enumerate}
	Let $\mathcal K^+$ be the  unitisation of $\mathcal K$, i.e. underlying vector space of $\mathcal K^+$ is the direct sum $\mathcal K \bigoplus \C$. Algebra $A \otimes \mathcal K$ (resp. $\widetilde{A} \otimes \mathcal K$) is an essential ideal of $B \otimes \mathcal K^+$ (resp. $\widetilde{B} \otimes \mathcal K^+$), i.e. condition (a) of the Definition \ref{fin_def} holds. Since $\left(B ,\widetilde{B}, G \right)$ is an unital  noncommutative finite-fold covering the algebra $\widetilde{B}$ is a finitely generated $B$ module, i.e. there are $\widetilde{b}_1, \dots , \widetilde{b}_n \in \widetilde{B}$ such that any $\widetilde{b} \in\widetilde{B}$ is given by
	$$
	\widetilde{b} = \sum_{j = 1}^n \widetilde{b}_jb_j; \text{	where } b_j \in B.
	$$
	From the above equation it turns out that if
$$
\widetilde{b}^{\mathcal K} = \sum_{k = 1}^m \widetilde{b}_k \otimes x_k \in \widetilde{B} \otimes \mathcal K^+;~ \widetilde{b}_k \in \widetilde{B}, ~ x_k \in \mathcal K^+
$$
and 
$$
\widetilde{b}_k  = \sum_{j = 1}^n \widetilde{b}_jb_{kj}
$$
then
\begin{equation}\label{stab_tensor_eqn}
\begin{split}
\widetilde{b}^{\mathcal K} = \sum_{j = 1}^n \left( \widetilde{b}_j\otimes 1_{\mathcal K^+}\right) b_j^{\mathcal K},\\
\text{ where } b_j^{\mathcal K} = \sum_{k = 1}^m b_{kj}  \otimes x_k \in B \otimes \mathcal K.
\end{split}
\end{equation}
From \eqref{stab_tensor_eqn} and taking into account that the algebraic tensor product of $\widetilde{B}$ and $\mathcal K^+$ is dense in its $C^*$-norm completion, it turns out that  any
$\widetilde{b}^{\mathcal K}\in \widetilde{B} \otimes \mathcal K^+$ can be represented as
	$$
	\widetilde{b}^{\mathcal K} = \sum_{j = 1}^n \left( \widetilde{b}_j\otimes 1_{\mathcal K^+}\right) b^{\mathcal K}_j; \text{	where } b^{\mathcal K}_j \in B \otimes \mathcal K^+.
	$$ 
	It follows that $\widetilde{B} \otimes \mathcal K^+$ is a finitely generated $B \otimes \mathcal K^+$ module.
	From the Kasparov Stabilization Theorem \cite{blackadar:ko} it turns out that $\widetilde{B}\otimes \mathcal K^+$ is a projective $B\otimes \mathcal K^+$ module. So $\left(B\otimes \mathcal K^+, \widetilde {B}\otimes \mathcal K^+,G \right)$ is an unital finite-fold noncommutative covering, i.e. the condition (b) of the Definition \ref{fin_def} holds. Denote by $\widetilde{A}^{\mathcal K}$ a subalgebra given by \eqref{wta_eqn}, i.e.
	\begin{equation*}
	\widetilde{A}^{\mathcal K} = \left\{\widetilde{a}\in \widetilde{B} \otimes   \mathcal K^+ ~|~ \left\langle \widetilde{B} \otimes \mathcal K^+ ,\widetilde{a}  \right\rangle_{\widetilde{B}\otimes  \mathcal K^+ } \in A \otimes \mathcal K \right\}.	\end{equation*}
	If $\widetilde{a}\in  \widetilde{A}\otimes{\mathcal K}$ then for any $\widetilde{b} \in \widetilde{B} \otimes \mathcal K^+$ following condition holds
	\begin{equation*}
\left\langle \widetilde{b} ,\widetilde{a} \right\rangle_{\widetilde{B}\otimes  \mathcal K^+ } = \sum_{g \in G} g\left(\widetilde{b}^*\widetilde{a} \right) \in  \widetilde{A}\otimes{\mathcal K}.
	\end{equation*}
	Since $\left\langle \widetilde{b} ,\widetilde{a} \right\rangle_{\widetilde{B}\otimes  \mathcal K^+ }$ is $G$-invariant one has $\left\langle \widetilde{b} ,\widetilde{a} \right\rangle_{\widetilde{B}\otimes  \mathcal K^+ }\in A\otimes \mathcal K$, i.e. $\widetilde{a} \in \widetilde{A}^{\mathcal K}$. It follows that $\widetilde{A}\otimes{\mathcal K} \subset \widetilde{A}^{\mathcal K}$. Any $\widetilde{a}\in 	\widetilde{A}^{\mathcal K}$ satisfies to $\widetilde{a} \in 	\widetilde{A}^{\mathcal K}\backslash \widetilde{A}\otimes{\mathcal K}$ if and only if one or both of two following conditions hold:
	\begin{enumerate}
		\item [(a)] If $p_{\mathcal K}: \widetilde{B} \otimes \mathcal K^+ \to \widetilde{B} \otimes \mathcal K^+/  \widetilde{B} \otimes \mathcal K$ the natural projection onto the quotient algebra then $p_{\mathcal K}\left(\widetilde{a} \right) \neq 0$.
		\item[(b)] If $p_{\widetilde{B}}: \widetilde{B} \otimes \mathcal K^+ \to \widetilde{B} \otimes \mathcal K^+/  \widetilde{A} \otimes \mathcal K^+$ the natural projection onto the quotient algebra then $p_{ \widetilde{B}}\left(\widetilde{a} \right) \neq 0$
	\end{enumerate}
	If
	$$
	a = \left\langle \widetilde{a}, \widetilde{a} \right\rangle_{\widetilde{B} \otimes \mathcal K^+ }= \sum_{g \in G} g\left( \widetilde{a}^* \widetilde{a}\right) 
	$$
	then 
	\begin{equation}\label{stab_fin_prod_eqn}
	p_{\mathcal K}\left(a \right)=\sum_{g \in G}  g \left( p_{\mathcal K}\left(\widetilde{a} \right)p_{\mathcal K}\left(\widetilde{a} \right)^*\right)  =  p_{\mathcal K}\left(\widetilde{a} \right)p_{\mathcal K}\left(\widetilde{a} \right)^* + \sum_{g \in G \backslash\{e\}} g \left( p_{\mathcal K}\left(\widetilde{a} \right)p_{\mathcal K}\left(\widetilde{a} \right)^*\right)
	\end{equation}
	where $e \in G$ is the unity of $G$.
	If $\widetilde{a} \notin \widetilde{B} \otimes \mathcal K$ then taking into account that all terms of \eqref{stab_fin_prod_eqn} are positive  one has $p_{\mathcal K}\left(a \right) \ne 0$. Hence from $\widetilde{a} \notin  \widetilde{B} \otimes \mathcal K$ it turns out $\widetilde{a}\notin \widetilde{A}^{ \mathcal K}$. Similarly if $\widetilde{a} \notin \widetilde{A} \otimes \mathcal K^+$ then $p_B\left( {\widetilde{a}} \right) \neq 0$ and
	\begin{equation}\label{stab_fin_prod_b_eqn}
	p_B\left(a \right)=\sum_{g \in G}  g \left( p_{B}\left(\widetilde{a} \right)p_{B}\left(\widetilde{a} \right)^*\right)  =  p_{B}\left(\widetilde{a} \right)p_{B}\left(\widetilde{a} \right)^* + \sum_{g \in G \backslash\{e\}} g \left( p_{B}\left(\widetilde{a} \right)p_{B}\left(\widetilde{a} \right)^*\right).
	\end{equation}
	Taking into account that all terms of \eqref{stab_fin_prod_b_eqn} are positive  one has $p_{B}\left(a \right) \ne 0$. Hence from $\widetilde{a} \notin  \widetilde{A} \otimes \mathcal K^+$ it turns out $\widetilde{a}\notin \widetilde{A}^{ \mathcal K}$. In result one has
	\begin{equation*}
	\begin{split}
	\widetilde{A}^{\mathcal K}\backslash \widetilde{A}\otimes{\mathcal K} = \emptyset,\\
	\widetilde{A}^{\mathcal K}= \widetilde{A}\otimes{\mathcal K},
	\end{split}
	\end{equation*}
	i.e. the condition (c) of the Definition \ref{fin_def} holds.

\end{proof}

\subsection{Infinite coverings}
\paragraph*{}
Suppose that
\begin{equation}\label{stable_seq_pure_eqn}
\mathfrak{S} =\left\{ A =A_0 \xrightarrow{\pi_1} A_1 \xrightarrow{\pi_2} ... \xrightarrow{\pi_n} A_n \xrightarrow{\pi_{n+1}} ...\right\} \subset \mathfrak{FinAlg}
\end{equation}
is an (algebraical)  finite covering sequence. From the Theorem \ref{stable_fin_cov_thm} it follows that
\begin{equation}\label{stable_seq_eqn}
\mathfrak{S}_{\mathcal K} =\left\{ A \otimes \mathcal K =A_0 \otimes \mathcal K  \xrightarrow{\pi_1 \otimes \Id_{ \mathcal K}} A_1  \otimes \mathcal K\xrightarrow{\pi_2\otimes \Id_{ \mathcal K}} ... \xrightarrow{\pi_n \otimes \Id_{ \mathcal K}} A_n  \otimes \mathcal K \xrightarrow{\pi_{n+1}\otimes \Id_{ \mathcal K}} ...\right\} 
\end{equation}
is an (algebraical)  finite covering sequence, i.e. $\mathfrak{S}_{\mathcal K} \subset \mathfrak{FinAlg}$.
Denote by $\widehat{A} = \varinjlim A_n$, $G_n = G\left( A_n ~|~A\right)$, $\widehat{G} = \varprojlim G_n$. Clearly $\varinjlim \left( A_n \otimes \mathcal K\right)  = \widehat{A}  \otimes \mathcal K$. If $\pi: \widehat{A} \to B\left( \widehat{\H}\right)$ is an equivariant representation and $\mathcal K = \mathcal K\left(\H \right)$  then $\pi_{\mathcal K}= \pi \otimes \Id_{ \mathcal K} :\widehat{A}  \otimes \mathcal K\to B\left( \H_{\mathcal K} = \widehat{\H} \otimes \H \right) $ is an equivariant representation. Let us  consider a $\widehat{A}''$-Hilbert module $\ell^2\left( \widehat{A}''\right) $ then  $\widehat{A}''  \otimes \mathcal K \approx  \mathcal K\left(\ell^2\left( \widehat{A}''\right) \right)$ 
\begin{lemma}\label{stab_spec_in_lem}
	If $\overline{a}\in \widehat{A}''$ is a special element with respect to $\mathfrak{S}$, and $p \in \mathcal K$ is a rank-one projector then $\overline{a}\otimes p \in \left(\widehat{A} \otimes\mathcal K\right)''$ is a special element with respect to $\mathfrak{S}_{\mathcal K}$.
\end{lemma}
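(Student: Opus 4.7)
The plan is to verify the three conditions of Definition~\ref{special_el_defn} for $\overline a\otimes p$ against $\mathfrak S_{\mathcal K}$. From the construction in Theorem~\ref{stable_fin_cov_thm} one has $G(A_n\otimes\mathcal K\,|\,A\otimes\mathcal K)\cong G_n$, so $\widehat G$ is common to $\mathfrak S$ and $\mathfrak S_{\mathcal K}$ and the action of $g\in\widehat G$ is $g\otimes\Id_{\mathcal K}$ on $\widehat A\otimes\mathcal K$, extending to the enveloping algebra by $g(\overline a\otimes p)=g\overline a\otimes p$. Writing $p=|\xi\rangle\langle\xi|$ for a unit vector $\xi\in\H$, the isometry $V=\mathbf 1\otimes|\xi\rangle\colon\widehat\H\to\widehat\H\otimes\H$ satisfies $T\otimes p=VTV^*$ for all $T\in B(\widehat\H)$, and the linear map $\phi\colon A\otimes\mathcal K\to A$, $\phi(W)=V^*WV$, is the slice $\Id_A\otimes\omega_\xi$ with $\omega_\xi(y)=\langle\xi,y\xi\rangle$. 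The crucial second observation is that for $g\in\ker(\widehat G\to G_n)$, $g$ acts trivially on $A_n\otimes\mathcal K$ and hence fixes every $z^{\mathcal K}\in A\otimes\mathcal K\subset A_n\otimes\mathcal K$.

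Condition~(a) is then immediate, $\sum_g g(\overline a\otimes p)=(\sum_g g\overline a)\otimes p=a_n\otimes p\in A_n\otimes\mathcal K$. For condition~(b), the $g$-triviality on $z\equiv z^{\mathcal K}$ gives $b_n^{\mathcal K}=z(a_n\otimes p)z^*\in A_n\otimes\mathcal K$ at once. For the second sum I use $(z(\overline a\otimes p)z^*)^2=z[(\overline a\otimes p)(z^*z)(\overline a\otimes p)]z^*$ together with the identity $(\overline a\otimes p)W(\overline a\otimes p)=(\overline a y_0\overline a)\otimes p$, with $y_0=\phi(z^*z)\in A_+$ and $W=z^*z$, to obtain
\begin{equation*}
c_n^{\mathcal K}=z\Bigl(\Bigl(\sum_g g(\overline a y_0\overline a)\Bigr)\otimes p\Bigr)z^*.
\end{equation*}
Strong convergence of the inner sum follows from the monotone-bounded principle applied to $\overline a y_0\overline a\leq\|y_0\|\,\|\overline a\|\,\overline a$ (via $\overline a^2\leq\|\overline a\|\overline a$), giving the majorisation $\sum_g g(\overline a y_0\overline a)\leq\|y_0\|\,\|\overline a\|\,a_n$. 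Membership of $c_n^{\mathcal K}$ in $A_n\otimes\mathcal K$ is obtained from the polar decomposition $zV=Wy_0^{1/2}$ via the identity $c_n^{\mathcal K}=Wc_n(y_0^{1/2})W^*$, combined with the regularisation $W_\delta=z(y_0+\delta)^{-1/2}V$: one checks that $W_\delta c_n(y_0^{1/2})W_\delta^*=z[(y_0+\delta)^{-1/2}c_n(y_0^{1/2})(y_0+\delta)^{-1/2}\otimes p]z^*$ lies in $A_n\otimes\mathcal K$ (using that $c_n(y_0^{1/2})\in A_n$ by the hypothesis with $z'=y_0^{1/2}\in A$ and that $(y_0+\delta)^{-1/2}\in M(A)$ acts as a multiplier on the ideal $A\otimes\mathcal K\subset A_n\otimes\mathcal K$), and that this converges to $c_n^{\mathcal K}$ as $\delta\downarrow 0$. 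For $d_n^{\mathcal K}$ I use the spectral identity $f_\eps(T\otimes\lambda P)=f_\eps(\lambda T)\otimes P$ (valid when $P$ is a rank-one projection and $\lambda\geq 0$), reducing the elementary-tensor case $z=x\otimes m$ to $d_n^{\mathcal K}=\bigl(\sum_g gf_\eps(\|m\xi\|^2 x\overline a x^*)\bigr)\otimes\tilde p$ with $\tilde p$ a rank-one projection; this lies in $A_n\otimes\mathcal K$ by the hypothesis with $z'=\|m\xi\|\,x\in A$, and norm-density of finite elementary tensors together with norm-continuity of the operation extends to general $z^{\mathcal K}$.

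Condition~(c) transfers $\|b_n^2-c_n\|<\eps$ to the stabilisation: $(b_n^{\mathcal K})^2-c_n^{\mathcal K}$ is obtained from $b_n^2-c_n$ (with $z'=y_0^{1/2}$) by the sandwich $W(\cdot)W^*$ and the $p$-tensor, yielding $\|(b_n^{\mathcal K})^2-c_n^{\mathcal K}\|\leq C(z^{\mathcal K},\xi)\|b_n^2-c_n\|$ for a constant $C$ independent of $n$; the threshold $N$ from the hypothesis on $\overline a$ then suffices. The principal technical obstacle is ensuring that $c_n^{\mathcal K}$ and $d_n^{\mathcal K}$ truly lie in the $C^*$-algebra $A_n\otimes\mathcal K$ rather than merely in its enveloping von Neumann algebra; this is the point of the polar-decomposition regularisation in the second step and of the norm-density/continuity argument in the third.
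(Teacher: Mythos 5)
Your overall strategy is the same as the paper's: both proofs reduce the specialness of $\overline a\otimes p$ at a general $z\in A\otimes\mathcal K$ to the specialness of $\overline a$ at a single auxiliary element of $A$ extracted from $z$ by a rank-one compression (the paper writes $z(\overline a\otimes p)z^*=p_{\mathcal K}\,\overline z\,\overline a\,\overline z^*p_{\mathcal K}$ with $\overline z\in A$; you write $z(\overline a\otimes p)z^*=(zV)\overline a(zV)^*$ with $(zV)^*(zV)=y_0\in A_+$ and invoke specialness at $z'=y_0^{1/2}$). Your treatment of condition (a), of $b_n^{\mathcal K}$ and $c_n^{\mathcal K}$, and of condition (c) is correct and in fact more careful than the paper's (the majorisation $\sum_g g(\overline a y_0\overline a)\le\|y_0\|\,\|\overline a\|\,a_n$ and the $(y_0+\delta)^{-1/2}$ regularisation make explicit steps the paper glosses over; note only that $(y_0+\delta)^{-1/2}$ lands in $A_n$ after multiplication because $(y_0+\delta)^{-1/2}-\delta^{-1/2}1\in A\subset A_n$, not because $A\otimes\mathcal K$ is an ideal in $A_n\otimes\mathcal K$ --- it is not, since $A$ is not an ideal in $A_n$).

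The genuine gap is in your argument for $d_n^{\mathcal K}$. First, the spectral identity $f_\eps(T\otimes\lambda P)=f_\eps(\lambda T)\otimes P$ only applies when $z$ is a \emph{single} elementary tensor $x\otimes m$; for a finite sum $z=\sum_i x_i\otimes m_i$ the operator $z(\overline a\otimes p)z^*=\sum_{i,j}x_i\overline a x_j^*\otimes m_ipm_j^*$ is no longer of the form $(\text{positive})\otimes(\text{rank-one projection})$, so the class of $z$'s you actually handle is not dense in $A\otimes\mathcal K$. Second, even granting termwise norm convergence $f_\eps(z_k(\overline a\otimes p)z_k^*)\to f_\eps(z(\overline a\otimes p)z^*)$ along an approximating sequence $z_k$, you cannot interchange this limit with the infinite, only strongly convergent, sum $\sum_{g\in\ker(\widehat G\to G_n)}$: the differences $g\bigl(f_\eps(z_k\cdots)-f_\eps(z\cdots)\bigr)$ are not positive, so there is no domination argument, and a bound on each term does not bound the norm of the sum. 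Hence the conclusion $d_n^{\mathcal K}\in A_n\otimes\mathcal K$ is not established for general $z$. The fix is available from the machinery you already set up for $c_n^{\mathcal K}$: since $zV=Wy_0^{1/2}$ with $W$ a partial isometry whose source projection dominates the support of $y_0^{1/2}\overline a y_0^{1/2}$, and $f_\eps(0)=0$, one has
\begin{equation*}
f_\eps\bigl(z(\overline a\otimes p)z^*\bigr)=f_\eps\bigl(Wy_0^{1/2}\overline a\,y_0^{1/2}W^*\bigr)=W\,f_\eps\bigl(y_0^{1/2}\overline a\,y_0^{1/2}\bigr)W^*,
\end{equation*}
so $d_n^{\mathcal K}=W\,d_n(y_0^{1/2})\,W^*$ with $d_n(y_0^{1/2})\in A_n$ by the hypothesis at $z'=y_0^{1/2}$, and the same $W_\delta$-regularisation you used for $c_n^{\mathcal K}$ shows $d_n^{\mathcal K}\in A_n\otimes\mathcal K$. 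With that replacement the proof is complete and parallels the paper's.
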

\begin{proof}
	One can select a basis of the Hilbert $\H$ space such that $p$ is represented by a following infinite matrix
	$$
	\begin{pmatrix}
	1& 0 &\ldots \\
	0& 0 &\ldots \\
	\vdots& \vdots &\ddots\\
	\end{pmatrix}.
	$$
	If $z \in A \otimes \mathcal K \subset \mathcal K\left(\ell^2\left(\widehat{A}\right)\right) $ is  represented by an infinite ,atrix
	$$
	z = \begin{pmatrix}
	a_{11}& a_{12} &\ldots \\
	a_{21}& a_{22} &\ldots \\
	\vdots& \vdots &\ddots\\
	\end{pmatrix}; \text{ where } a_{jk} \in A
	$$
	then
	$$
	z \left(\overline{a}\otimes p \right)z^* 
	$$
	is represented by the matrix
	$$
	z \left(\overline{a}\otimes p \right)z^* =  \begin{pmatrix}
	a_{11}& a_{12} &\ldots \\
	0& 0 &\ldots \\
	\vdots& \vdots &\ddots\\
	\end{pmatrix}\overline{a} \begin{pmatrix}
	a^*_{11}& 0 &\ldots \\
	a^*_{12}& 0 &\ldots \\
	\vdots& \vdots &\ddots\\
	\end{pmatrix}.
	$$
	Let us select an orthogonal basis $\left\{\xi_n \in \ell^2\left(\widehat{A}\right)  \right\}_{n \in \N}$ such that
	$$
	\xi_1=\begin{pmatrix}
	a^*_{11} \\
	a^*_{12} \\
	\vdots\\
	\end{pmatrix}.
	$$
	If $p_{\mathcal K} \in \mathcal K\left( \ell^2\left( A\right) \right)\subset \mathcal K\left(  \ell^2\left({\widehat{A}''}\right)\right)  $ a projector onto a submodule $A\xi_1\subset\ell^2\left(A \right) $ there is $\overline z \in A$ such that
	$$
	\begin{pmatrix}
	a^*_{11}& 0 &\ldots \\
	a^*_{12}& 0 &\ldots \\
	\vdots& \vdots &\ddots\\
	\end{pmatrix}= \overline z^* p_{\mathcal K}.
	$$
	It turns out
	\begin{equation}\label{stab_zpz}
	z\left( \overline{a}\otimes p\right)  z^*=  p_{\mathcal K} \overline z~ \overline{   a}~\overline z^* p_{\mathcal K}.
	\end{equation}
	
	Let $f_\eps$ is given by \eqref{f_eps_eqn}. The
	element $\overline{a}$ is special, it follows that the series
	\begin{equation*}
	\begin{split}
	a_n = \sum_{g \in \ker\left( \widehat{G} \to  G\left( A_n~|~A \right)\right)} g \  \overline{a} ,\\
	b_n = \sum_{g \in \ker\left( \widehat{G} \to  G\left( A_n~|~A \right)\right)} g \left(\overline z ~ \overline{a} ~ \overline z^*\right) ,\\
	c_n = \sum_{g \in \ker\left( \widehat{G} \to  G\left( A_n~|~A \right)\right)} g \left(\overline z ~ \overline{a} ~ \overline z^*\right)^2,\\
	d_n = \sum_{g \in \ker\left( \widehat{G} \to  G\left( A_n~|~A \right)\right)} g f_\eps\left( \overline z ~ \overline{a} ~ \overline z^* \right) 
	\end{split}
	\end{equation*}
	are strongly convergent and the sums lie in   $A_n$, i.e. $b_n,~ c_n,~ d_n \in A_n $; 
	For any $\eps > 0$ there is $N \in \N$ (which depends on $\overline{a}$ and $z$) such that for any $n \ge N$ a following condition holds
	\begin{equation*}
	\begin{split}
	\left\| b_n^2 - c_n\right\| < \eps.
	\end{split}
	\end{equation*}	
	It turns out 
	\begin{equation*}
	\begin{split}
	a^{\mathcal K}_n = \sum_{g \in \ker\left( \widehat{G} \to  G\left( A_n~|~A \right)\right)} g  \left(\overline{a}\otimes p \right)= a_n \otimes p \in A_n \otimes \mathcal K,\\
	b^{\mathcal K}_n = \sum_{g \in \ker\left( \widehat{G} \to  G\left( A_n~|~A \right)\right)} g \left( z \left(\overline{a}\otimes p \right)z^*\right) =  p_\mathcal{K} b_n p_\mathcal{K} \in A_n \otimes \mathcal K ,\\
	c^{\mathcal K}_n = \sum_{g \in \ker\left( \widehat{G} \to  G\left( A_n~|~A \right)\right)} g \left( z \left(\overline{a}\otimes p \right)z^*\right)^2 =  p_\mathcal{K} c_n p_\mathcal{K}\in A_n \otimes \mathcal K,\\
	d^{\mathcal K}_n = \sum_{g \in \ker\left( \widehat{G} \to  G\left( A_n~|~A \right)\right)} g f_\eps\left(z \left(\overline{a}\otimes p \right)z^* \right) =p_\mathcal{K} d_n p_\mathcal{K}\in A_n \otimes \mathcal K,
	\end{split}
	\end{equation*}
	i.e. conditions (a), (b) of the Definition \ref{special_el_defn} hold. From 
	\begin{equation*}
	\begin{split}
	\left\| b_n^2 - c_n\right\|=\left\|  p_\mathcal{K}b_n^2 p_\mathcal{K} -  p_\mathcal{K}c_n p_\mathcal{K}\right\|=\left\| \left( b^{\mathcal K}_n\right)^2 - c^{\mathcal K}_n\right\|
	\end{split}
	\end{equation*}
	it follows that for any $\eps > 0$ there is $N \in \N$ such that
	$$
	\left\| \left( b^{\mathcal K}_n\right)^2 - c^{\mathcal K}_n\right\| < \eps
	$$	
	for any $n \ge N$. It means that the conditions (c) of the Definition \ref{special_el_defn} holds.
\end{proof}
\begin{corollary}\label{stab_spec_in_cor}
	Let $\overline{A}_\pi \subset B\left( \widehat{\H}\right)$ be the  disconnected inverse noncommutative limit of $\downarrow\mathfrak{S}$ with respect to $\pi$.
	If $\overline{A}^\mathcal{K}_{\pi \otimes \Id_{ \mathcal K}}$ is be the  disconnected inverse noncommutative limit of $\downarrow\mathfrak{S}_\mathcal{K}$ with respect to $\pi \otimes \Id_{ \mathcal K}$ then
	$$
	\overline{A}_\pi \otimes \mathcal K \subset \overline{A}^\mathcal{K}_{\pi \otimes \Id_{ \mathcal K}}.
	$$
\end{corollary}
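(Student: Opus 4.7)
The plan is to verify the inclusion on a generating set of elementary tensors, using Lemma \ref{stab_spec_in_lem} as the key ingredient and then extending by density in each factor.

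First, I would check the inclusion on an element of the form $\overline{a}' \otimes p$ where $\overline{a}' = x\overline{a}y \in \overline{A}_\pi$ is weakly special (so $\overline{a}$ is special and $x, y \in \widehat{A}$) and $p \in \mathcal K$ is a rank-one projector. Exploiting $p = p^3$, I would rewrite
\[
\overline{a}' \otimes p = (x \otimes p)\,(\overline{a} \otimes p)\,(y \otimes p).
\]
Since $x \otimes p$ and $y \otimes p$ lie in the algebraic, hence $C^*$-, tensor product $\widehat{A} \otimes \mathcal K$, and since the middle factor $\overline{a} \otimes p$ is special with respect to $\mathfrak{S}_{\mathcal K}$ by Lemma \ref{stab_spec_in_lem}, the element $\overline{a}' \otimes p$ is weakly special with respect to $\mathfrak{S}_{\mathcal K}$ and therefore lies in $\overline{A}^{\mathcal K}_{\pi \otimes \Id_{\mathcal K}}$. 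The same trick handles an arbitrary product of weakly special elements tensored with a single rank-one projector: $(\overline{a}'_1 \cdots \overline{a}'_n) \otimes p = \prod_{i=1}^n (\overline{a}'_i \otimes p)$, again because $p = p^n$.

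Next, I would invoke density in each tensor factor. The complex linear span of rank-one projectors in $\mathcal K$ contains every matrix unit $e_{ij}$, since each $e_{ij}$ is a fixed complex combination of the rank-one projectors onto $\xi_i$, $\xi_j$, $(\xi_i+\xi_j)/\sqrt 2$ and $(\xi_i+i\xi_j)/\sqrt 2$; hence this span is norm-dense in $\mathcal K$. The algebra generated by weakly special elements is norm-dense in $\overline{A}_\pi$ by definition. Given any elementary tensor $a \otimes k \in \overline{A}_\pi \otimes \mathcal K$, pick approximants $b_n \to a$ and $k_m \to k$ of the respective dense-algebra types. Distributing linearity, each $b_n \otimes k_m$ is a finite linear combination of elements of the form treated in the first step, so $b_n \otimes k_m \in \overline{A}^{\mathcal K}_{\pi \otimes \Id_{\mathcal K}}$. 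The standard estimate
\[
\|b_n \otimes k_m - a \otimes k\| \le \|b_n - a\|\,\|k_m\| + \|a\|\,\|k_m - k\|
\]
shows $b_n \otimes k_m \to a \otimes k$ in norm, so $a \otimes k$ lies in the closed $C^*$-subalgebra $\overline{A}^{\mathcal K}_{\pi \otimes \Id_{\mathcal K}}$. Since elementary tensors generate $\overline{A}_\pi \otimes \mathcal K$ as a $C^*$-algebra, the claimed inclusion follows.

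The main obstacle has already been dispatched in Lemma \ref{stab_spec_in_lem}; what remains is essentially algebraic bookkeeping plus one density argument. The only subtle point I would flag is that the factors $x \otimes p$ and $y \otimes p$ must belong to $\widehat{A} \otimes \mathcal K$ itself, not merely to the enveloping von Neumann algebra, but this is automatic because $p \in \mathcal K$ and $x, y \in \widehat{A}$ are chosen from the respective base algebras.
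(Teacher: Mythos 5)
Your proof is correct and rests on the same underlying idea as the paper's: reduce to elementary tensors of the form (special element)${}\otimes{}$(rank-one projector), invoke Lemma \ref{stab_spec_in_lem}, and finish by density. The paper's own proof is a single sentence asserting that the linear span of the special elements $\overline{a}\otimes p$ is dense in $\overline{A}_\pi \otimes \mathcal K$; taken literally this is a gap, since $\overline{A}_\pi$ is by definition the closure of the \emph{algebra generated by weakly special} elements $x\overline{a}y$, not of the span of special ones. Your write-up supplies exactly the missing bookkeeping: the factorization $\overline{a}'\otimes p=(x\otimes p)(\overline{a}\otimes p)(y\otimes p)$ using $p=p^3$ shows that weakly special elements tensored with $p$ are weakly special for $\mathfrak{S}_{\mathcal K}$, the same trick handles products, and the matrix-unit computation shows the span of rank-one projectors is dense in $\mathcal K$, so the closure of what you produce really is all of $\overline{A}_\pi\otimes\mathcal K$. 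In short, same route, but yours is the complete version of the argument the paper only sketches.
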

\begin{proof}
	The linear span of given by the Lemma \ref{stab_spec_in_lem} special elements $\overline{a}\otimes p \in \left(\widehat{A} \otimes\mathcal K\right)''$ is dense in $\overline{A}_\pi \otimes \mathcal K$, so $
	\overline{A}_\pi \otimes \mathcal K \subset \overline{A}^\mathcal{K}_{\pi \otimes \Id_{ \mathcal K}}$.
\end{proof}

\begin{lemma}\label{stab_spec_out_lem}
	Let $\mathfrak{S}$ be an  algebraical  finite covering sequence given by \eqref{stable_seq_pure_eqn}. Let  $\pi:\widehat{A} \to B\left( \widehat{\H}\right) $ be an equivariant representation.	 Let $\overline{A}_\pi \subset B\left( \widehat{\H}\right)$ be the  disconnected inverse noncommutative limit of $\downarrow\mathfrak{S}$ with respect to $\pi$. Let $\mathfrak{S}_\mathcal{K}$ be an  algebraical  finite covering sequence given by \eqref{stable_seq_eqn}.
	If  $\overline a^\mathcal{K} \in B\left( \widehat{\H} \otimes \H\right)_+$ is a is special element (with respect to $\mathfrak S_\mathcal{K}$) then
	$$
	\overline a^\mathcal{K} \in \overline{A}_\pi  \otimes \mathcal K.
	$$
\end{lemma}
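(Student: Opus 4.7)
The plan is to establish $\overline a^\mathcal K \in \overline A_\pi \otimes \mathcal K$ in three steps: first, to show that $\overline a^\mathcal K$ is already compact in the $\H$-tensor factor, i.e.\ that $\overline a^\mathcal K \in B(\widehat\H) \otimes \mathcal K$; second, to recover each matrix entry $\overline b_{kl} := \langle\xi_k|\overline a^\mathcal K|\xi_l\rangle \in B(\widehat\H)$ (with respect to a fixed orthonormal basis $\{\xi_k\}$ of $\H$) as an element of $\overline A_\pi$; and third, to approximate $\overline a^\mathcal K$ in norm by finite-rank compressions which lie in $\overline A_\pi \otimes \mathcal K$ by construction.

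For the first step I would use Lemma \ref{stong_conv_inf_lem} applied to $\mathfrak S_\mathcal K$, which gives $0 \le \overline a^\mathcal K \le a_0^\mathcal K$, where $a_0^\mathcal K = \sum_{g \in \widehat G} g\,\overline a^\mathcal K \in A \otimes \mathcal K$ by condition (a) of Definition \ref{special_el_defn}. Combining the usual characterisation of $B(\widehat\H) \otimes \mathcal K \subset B(\widehat\H \otimes \H)$ as the operators $T$ with $(1 \otimes p_N)T(1 \otimes p_N) \to T$ in norm along any sequence of finite-rank projections $p_N \uparrow 1_\H$, with the standard positivity estimate
\[
\|(1 \otimes (1-p_N))\,\overline a^\mathcal K\,(1 \otimes (1-p_N))\| \le \|(1 \otimes (1-p_N))\,a_0^\mathcal K\,(1 \otimes (1-p_N))\|
\]
and the analogous off-diagonal Cauchy--Schwarz bound, one concludes $\overline a^\mathcal K \in B(\widehat\H) \otimes \mathcal K$.

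For the second step, given a finite linear combination $v = \sum_k c_k \xi_k \in \H$, set $\overline d_v := \langle v|\overline a^\mathcal K|v\rangle = \sum_{k,l} \overline{c_k}\,c_l\,\overline b_{kl} \in B(\widehat\H)_+$. For any $z \in A$ and any unit vector $w \in \H$, take $z' := z \otimes |w\rangle\langle v| \in A \otimes \mathcal K$; direct computation yields the tensor identities
\[
z' \overline a^\mathcal K (z')^* = (z\,\overline d_v\,z^*) \otimes |w\rangle\langle w|, \quad (z' \overline a^\mathcal K (z')^*)^2 = (z\,\overline d_v\,z^*)^2 \otimes |w\rangle\langle w|,
\]
\[
f_\eps(z' \overline a^\mathcal K (z')^*) = f_\eps(z\,\overline d_v\,z^*) \otimes |w\rangle\langle w|,
\]
the last using $f_\eps(0)=0$ together with $|w|=1$. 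Slicing back by the state $\langle w|\cdot|w\rangle$ transfers conditions (a)--(c) of Definition \ref{special_el_defn} from $\overline a^\mathcal K$ (w.r.t.\ $\mathfrak S_\mathcal K$) verbatim to $\overline d_v$ (w.r.t.\ $\mathfrak S$); in particular the identity $\|(b_n^\mathcal K)^2 - c_n^\mathcal K\| = \|b_n^2 - c_n\|$ ensures that condition (c) passes through. Hence $\overline d_v$ is special and lies in $\overline A_\pi$. Taking $v = \xi_i$ gives $\overline b_{ii} \in \overline A_\pi$, while the polarisation identity
\[
\overline b_{ij} = \frac{1}{4} \sum_{\zeta \in \{1,i,-1,-i\}} \bar\zeta\,\overline d_{\xi_i + \zeta \xi_j}
\]
forces $\overline b_{ij} \in \overline A_\pi$ for all indices $i \ne j$.

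For the third step, letting $p_N \in B(\H)$ denote the projection onto $\mathrm{span}(\xi_0,\ldots,\xi_{N-1})$, the compression
\[
(1 \otimes p_N)\,\overline a^\mathcal K\,(1 \otimes p_N) = \sum_{i,j < N} \overline b_{ij} \otimes |\xi_i\rangle\langle\xi_j|
\]
is a finite sum of elements of $\overline A_\pi \otimes \mathcal K$, and by the first step it converges in norm to $\overline a^\mathcal K$ as $N \to \infty$, giving $\overline a^\mathcal K \in \overline A_\pi \otimes \mathcal K$. The main obstacle is the careful bookkeeping in the second step: one must verify that tensoring with the rank-one projection $|w\rangle\langle w|$ is both norm-preserving on positive operators and compatible with the functional calculus $f_\eps$, which is precisely what makes condition (c) $\|b_n^2 - c_n\| < \eps$ transfer cleanly from $\overline a^\mathcal K$ to $\overline d_v$ and thereby deliver the specialness of every sesquilinear slice $\overline d_v$.
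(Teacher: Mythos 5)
Your proposal is correct, and it takes a genuinely different route from the paper. The paper argues by contradiction through a single matrix entry: assuming $\overline a^{\mathcal K}\notin\overline{A}_\pi\otimes\mathcal K$, it extracts a diagonal entry $a_{11}=\left\langle \xi,\overline a^{\mathcal K}\xi\right\rangle\notin\overline{A}_\pi$, concludes that $a_{11}$ cannot be special, and then transfers the failing condition (a)--(c) back to $\overline a^{\mathcal K}$ via $z_{\mathcal K}=z\otimes e_{11}$, using the same norm identity $\bigl\| ( b^{\mathcal K}_n)^2-c^{\mathcal K}_n\bigr\|=\bigl\| b_n^2-c_n\bigr\|$ that you use in the forward direction. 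Your second step is essentially the contrapositive of this, upgraded by polarisation to cover off-diagonal entries. What your version buys is a repair of a real gap in the paper's argument: the paper's opening reduction --- that $\overline a^{\mathcal K}\notin\overline{A}_\pi\otimes\mathcal K$ forces some matrix entry outside $\overline{A}_\pi$ --- is not valid as stated, since an operator can have all matrix entries in $\overline{A}_\pi$ and still fail to lie in $\overline{A}_\pi\otimes\mathcal K$ (membership also requires norm convergence of the finite compressions). Your steps 1 and 3 supply exactly this missing compactness: the domination $0\le\overline a^{\mathcal K}\le a_0^{\mathcal K}\in A\otimes\mathcal K$ from Lemma \ref{stong_conv_inf_lem} together with positivity and the Cauchy--Schwarz bound on the off-diagonal blocks forces $\bigl\|\overline a^{\mathcal K}-\left(1\otimes p_N\right)\overline a^{\mathcal K}\left(1\otimes p_N\right)\bigr\|\to 0$, after which membership of the entries in $\overline{A}_\pi$ finishes the proof. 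The only caveat, which your argument shares with the paper's, is the implicit use of the implication ``special $\Rightarrow$ lies in $\overline{A}_\pi$'' (strictly, $\overline{A}_\pi$ is generated by \emph{weakly} special elements $x\overline a y$ with $x,y\in\widehat A$, so in the non-unital case one should insert an approximate-unit argument); since the paper's own proof relies on the same implication, this is not a defect of your approach relative to the original.
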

\begin{proof}
	From the  Corollary \ref{special_cor} it turns out that $\overline a^\mathcal{K} \in \left(\widehat{A} \otimes \mathcal K \right)''$, i.e. $\overline a^\mathcal{K}\in B\left(\ell^2\left({\widehat{A}''} \right)  \right)$. If  $\overline a^\mathcal{K}\notin \overline{A}_\pi  \otimes \mathcal K$ then there are $\xi, \eta \in \ell^2\left(\overline{A}_\pi \right) $ such that 
	\begin{equation}\label{stab_prod_eqn}
	\left\langle \xi, \overline a^\mathcal{K} \eta\right\rangle_{\ell^2\left(H_{\widehat{A}''} \right) } \notin \overline{A}_\pi.
	\end{equation}
The element $\overline a^\mathcal{K}$ is positive, so it is self-adjoint, hence \eqref{stab_prod_eqn} is equivalent to the existence of $\xi\in \ell^2\left(\overline{A}_\pi \right) $ such that
	\begin{equation}\label{stab_prod_x_eqn}
	\left\langle \xi, \overline a^\mathcal{K} \xi\right\rangle_{\ell^2\left(H_{\widehat{A}''} \right) } \notin \overline{A}_\pi.
	\end{equation}
	If we select a basis $\left\{\xi_n \in \ell^2\left(\widehat{A}'' \right) \right\}_{n \in \N}$ of $\ell^2\left(\widehat{A}'' \right)$  of such that $\xi_1 = \xi$ then $\overline{a}^{\mathcal{K}}$ is represented by an infinite matrix
	\begin{equation*}
	\overline{a}^{\mathcal{K}} = \begin{pmatrix}
	\overline{a}_{11}& \overline{a}_{12} &\ldots \\
	\overline{a}_{21}& \overline{a}_{22} &\ldots \\
	\vdots& \vdots &\ddots\\
	\end{pmatrix}
	\end{equation*}
	and from \eqref{stab_prod_x_eqn} it turns out $a_{11} \notin \overline{A}_\pi$.
	Let us select $z \in A$  such that $a_{11}$ does not satisfy to the Definition \ref{special_el_defn}, i.e. at last one of the following (a)-(c) does not hold:
	\begin{enumerate}
		\item[(a)] For any $n \in \mathbb{N}^0$  the following  series 
		\begin{equation*}
		\begin{split}
		a_n = \sum_{g \in \ker\left( \widehat{G} \to  G\left( A_n~|~A \right)\right)} g  \overline{a}_{11} 
		\end{split}
		\end{equation*}
		is strongly convergent and the sum lies in   $A_n$, i.e. $a_n \in A_n $;		
		\item[(b)]
		If $f_\eps: \R \to \R$ is given by 
		\eqref{f_eps_eqn}
		then any $n \in \mathbb{N}^0$ and for any $z \in A$   following  series 
		\begin{equation*}
		\begin{split}
		b_n = \sum_{g \in \ker\left( \widehat{G} \to  G\left( A_n~|~A \right)\right)} g \left(z  \overline{a}_{11} z^*\right) ,\\
		c_n = \sum_{g \in \ker\left( \widehat{G} \to  G\left( A_n~|~A \right)\right)} g \left(z  \overline{a}_{11} z^*\right)^2,\\
		d_n = \sum_{g \in \ker\left( \widehat{G} \to  G\left( A_n~|~A \right)\right)} g f_\eps\left( z  \overline{a}_{11} z^* \right) 
		\end{split}
		\end{equation*}
		are strongly convergent and the sums lie in   $A_n$, i.e. $b_n,~ c_n,~ d_n \in A_n $; 
		\item[(c)] For any $\eps > 0$ there is $N \in \N$ (which depends on $\overline{a}$ and $z$) such that for any $n \ge N$ a following condition holds
		\begin{equation*}
		\begin{split}
		\left\| b_n^2 - c_n\right\| < \eps.
		\end{split}
		\end{equation*}	
	\end{enumerate}
	If $z_\mathcal{K}$ is given by
	$$
	z_\mathcal{K}  = \begin{pmatrix}
	z& 0 &\ldots \\
	0& 0 &\ldots \\
	\vdots& \vdots &\ddots\\
	\end{pmatrix}
	$$
	then
	\begin{equation*}
	\begin{split} 
	a^\mathcal{K}_n = \sum_{g \in \ker\left( \widehat{G} \to  G\left( A_n~|~A \right)\right)} g  \overline{a}= \begin{pmatrix}
	a_n& \sum_{g \in \ker\left( \widehat{G} \to  G\left( A_n~|~A \right)\right)} g  \overline{a}_{12} &\ldots \\
	\sum_{g \in \ker\left( \widehat{G} \to  G\left( A_n~|~A \right)\right)} g  \overline{a}_{21}& \sum_{g \in \ker\left( \widehat{G} \to  G\left( A_n~|~A \right)\right)} g  \overline{a}_{22} &\ldots \\
	\vdots& \vdots &\ddots\\
	\end{pmatrix},\\
	b^\mathcal{K}_n = \sum_{g \in \ker\left( \widehat{G} \to  G\left( A_n~|~A \right)\right)} g \left(  z_\mathcal{K}\overline{a}z^*_\mathcal{K}\right) = \begin{pmatrix}
	b_n& 0 &\ldots \\
	0& 0 &\ldots \\
	\vdots& \vdots &\ddots,\\
	\end{pmatrix},\\
	c^\mathcal{K}_n = \sum_{g \in \ker\left( \widehat{G} \to  G\left( A_n~|~A \right)\right)} g \left(  z_\mathcal{K}\overline{a}z^*_\mathcal{K}\right)^2 = \begin{pmatrix}\\
	c_n& 0 &\ldots \\
	0& 0 &\ldots \\
	\vdots& \vdots &\ddots,
	\end{pmatrix},\\
	d^\mathcal{K}_n = \sum_{g \in \ker\left( \widehat{G} \to  G\left( A_n~|~A \right)\right)} g f_\eps \left(  z_\mathcal{K}\overline{a}z^*_\mathcal{K}\right) = \begin{pmatrix}\\
	d_n& 0 &\ldots \\
	0& 0 &\ldots \\
	\vdots& \vdots &\ddots\\
	\end{pmatrix}.
	\end{split}
	\end{equation*}
	
From above equation it turns out
	\begin{equation*}
	\begin{split} 
	a_n \notin A_n \Rightarrow a^\mathcal{K}_n \notin A_n \otimes \mathcal K,\\
	b_n \notin A_n \Rightarrow b^\mathcal{K}_n \notin A_n \otimes \mathcal K,\\
	c_n \notin A_n \Rightarrow c^\mathcal{K}_n \notin A_n \otimes \mathcal K,\\
	d_n \notin A_n \Rightarrow d^\mathcal{K}_n \notin A_n \otimes \mathcal K.
	\end{split}
	\end{equation*}
	Following condition holds
	$$
	\left\| \left( b^{\mathcal K}_n\right)^2 - c^{\mathcal K}_n\right\| = \left\| b^2_n - c_n\right\|.
	$$
	Hence if there is $\eps > 0$ such that for any $N \in \N$ there is $n \ge N$ which satisfy to the following condition
	
	$$
	\left\| b^2_n - c_n\right\|\ge \eps
	$$
	then for any $N \in \N$ there is $n \ge N$ such that
	$$
	\left\| \left( b^{\mathcal K}_n\right)^2 - c^{\mathcal K}_n\right\| \ge \eps.
	$$
	We have a contradiction. From this contradiction it turns out
	$$
	\overline a^\mathcal{K} \in \overline{A}_\pi  \otimes \mathcal K.
	$$
\end{proof}

\begin{corollary}\label{stab_spec_out_cor}
	Let $\overline{A}_\pi \subset B\left( \widehat{\H}\right)$ be the  disconnected inverse noncommutative limit of $\downarrow\mathfrak{S}$ with respect to $\pi$.
	If $\overline{A}^\mathcal{K}_{\pi \otimes \Id_{ \mathcal K}}$ is  the  disconnected inverse noncommutative limit of $\downarrow\mathfrak{S}_\mathcal{K}$ with respect to $\pi \otimes \Id_{ \mathcal K}$ then
	$$
	\overline{A}^\mathcal{K}_{\pi \otimes \Id_{ \mathcal K}}\subset	\overline{A}_\pi \otimes \mathcal K.
	$$
\end{corollary}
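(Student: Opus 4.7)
The plan is to combine Lemma \ref{stab_spec_out_lem} with the way $\overline{A}^\mathcal{K}_{\pi \otimes \Id_{\mathcal K}}$ is built out of its generators. By Definition \ref{main_defn_full} applied to $\mathfrak{S}_\mathcal{K}$, the algebra $\overline{A}^\mathcal{K}_{\pi \otimes \Id_{\mathcal K}}$ is the $C^*$-norm completion of the $*$-algebra generated by weakly special elements with respect to $\mathfrak{S}_\mathcal{K}$, and every such weakly special element has the form $x \overline{a}^\mathcal{K} y$ with $\overline{a}^\mathcal{K}$ special and $x, y \in \widehat{A} \otimes \mathcal{K}$. Since $\overline{A}_\pi \otimes \mathcal{K}$ is already norm closed inside $B(\widehat{\H} \otimes \H)$, it will suffice to show that each generator $x \overline{a}^\mathcal{K} y$ already lies in $\overline{A}_\pi \otimes \mathcal{K}$.

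The core input will be Lemma \ref{stab_spec_out_lem}, which places the positive factor $\overline{a}^\mathcal{K}$ itself inside $\overline{A}_\pi \otimes \mathcal{K}$. To absorb the multipliers $x$ and $y$, the next step is to observe that $\widehat{A}$ embeds into $M(\overline{A}_\pi)$: for any weakly special element $b = u\overline{b}v \in \overline{A}_\pi$ and any $w \in \widehat{A}$, the products $(wu)\overline{b}v$ and $u\overline{b}(vw)$ are again weakly special by definition and so remain in $\overline{A}_\pi$, and this left/right multiplier property extends to the $C^*$-closure. Tensoring then gives $\widehat{A} \otimes \mathcal{K} \subset M(\overline{A}_\pi \otimes \mathcal{K})$, which is enough to conclude that $x \overline{a}^\mathcal{K} y \in \overline{A}_\pi \otimes \mathcal{K}$ for every weakly special generator, and hence that the entire $C^*$-completion is contained in $\overline{A}_\pi \otimes \mathcal{K}$.

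The main technical point I expect to need care with is the multiplier step: one must justify that the two-sided action of $\widehat{A} \otimes \mathcal{K}$ on $\overline{A}_\pi \otimes \mathcal{K}$ is well defined and continuous on the completed tensor product, not just on the algebraic one. Because $\mathcal{K}$ is nuclear (Remark \ref{stable_rem}), the $C^*$-norm on $\widehat{A} \otimes \mathcal{K}$ is unambiguous, so the multiplier action extends continuously from the algebraic tensor product to its $C^*$-completion; modulo this verification, the corollary is a direct consequence of Lemma \ref{stab_spec_out_lem} and the definition of $\overline{A}^\mathcal{K}_{\pi \otimes \Id_{\mathcal K}}$.
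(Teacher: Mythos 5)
Your argument is correct and follows the route the paper clearly intends: the paper states Corollary \ref{stab_spec_out_cor} without proof, treating it as an immediate consequence of Lemma \ref{stab_spec_out_lem}, and your proposal supplies exactly the missing bookkeeping --- reducing to weakly special generators $x\overline{a}^{\mathcal K}y$ and absorbing $x,y \in \widehat{A}\otimes\mathcal K$ via the multiplier action on $\overline{A}_\pi\otimes\mathcal K$. The multiplier step you flag is indeed the only point needing care, and it goes through by norm continuity of multiplication in $B\left(\widehat{\H}\otimes\H\right)$ together with the observation that the algebra generated by weakly special elements is invariant under left and right multiplication by $\widehat{A}$.
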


\
\begin{theorem}\label{stab_inf_thm}
	Let $\mathfrak{S}$ be an  algebraical  finite covering sequence given by \eqref{stable_seq_pure_eqn}. Let  $\pi:\widehat{A} \to B\left( \widehat{\H}\right) $ be good representation.	 Let $\overline{A}_\pi \subset B\left( \widehat{\H}\right)$ be the  disconnected inverse noncommutative limit of $\downarrow\mathfrak{S}$ with respect to $\pi$. If $\mathfrak{S}_\mathcal{K}$ be an  algebraical  finite covering sequence given by \eqref{stable_seq_pure_eqn} then $\pi \otimes \Id_{ \mathcal K}: \varprojlim A_n \otimes \mathcal{K} \to B\left(\widetilde{   \H}\otimes \H \right) $ is a good representation. Moreover if
	\begin{equation*}
	\begin{split}
	\varprojlim_\pi \downarrow \mathfrak{S}=\widetilde{A}_\pi,\\
	G\left(\widetilde{A}_\pi~|~ A\right)=G_\pi.
	\end{split}
	\end{equation*}	
	then
	\begin{equation*}
	\begin{split}
	\varprojlim_{\pi \otimes \Id_{ \mathcal K}} \downarrow \mathfrak{S}_\mathcal{K}\cong\widetilde{A}_\pi \otimes \mathcal K,\\
	G\left(\varprojlim_{\pi \otimes \Id_{ \mathcal K}} \downarrow \mathfrak{S}_\mathcal{K}~|~ A\otimes \mathcal K\right)\cong	G\left(\widetilde{A}_\pi \otimes \mathcal K~|~ A\otimes \mathcal K\right)\cong G_\pi.
	\end{split}
	\end{equation*}	
	It means that if $\left(A, \widetilde{A}_\pi, G_\pi\right)$ is an  infinite noncommutative covering of $\mathfrak{S}$  (with respect to $\pi$)	 then $\left(A \otimes \mathcal K, \widetilde{A}_\pi \otimes \mathcal K, G_\pi\right)$ is an  infinite noncommutative covering of $\mathfrak{S}$  (with respect to $\pi \otimes \Id_{ \mathcal K}$).
\end{theorem}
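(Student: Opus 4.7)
The overall plan is to combine the two stabilization lemmas already established (Corollaries \ref{stab_spec_in_cor} and \ref{stab_spec_out_cor}) to identify the disconnected inverse limit, and then to transfer the connected-component structure from $\overline{A}_\pi$ to $\overline{A}_\pi \otimes \mathcal K$ by a factorization argument that exploits the simplicity and nuclearity of $\mathcal K$. First, Corollaries \ref{stab_spec_in_cor} and \ref{stab_spec_out_cor} together give the equality $\overline{A}^{\mathcal K}_{\pi \otimes \Id_{\mathcal K}} = \overline{A}_\pi \otimes \mathcal K$. By the construction in Theorem \ref{stab_inf_thm}'s unital analogue \ref{stable_fin_cov_thm}, the covering $(A_n \otimes \mathcal K,\, A_{n+1}\otimes \mathcal K,\, G_{n+1})$ has the same group as $(A_n, A_{n+1}, G_{n+1})$, so the projective limit $\widehat G = \varprojlim G_n$ acts on the stabilized tower exactly diagonally through its action on $\overline{A}_\pi$.

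Next I would exhibit $\widetilde{A}_\pi \otimes \mathcal K$ as a connected component (Definition \ref{main_sdefn}) of $\overline{A}^{\mathcal K}_{\pi \otimes \Id_{\mathcal K}}$. Condition (b) of Definition \ref{good_seq_defn}, applied to the good representation $\pi$, produces a set $J$ of representatives of $\widehat G / G_\pi$ such that $\bigoplus_{g\in J} g\widetilde A_\pi$ is dense in $\overline{A}_\pi$. Tensoring with $\mathcal K$ (uniquely, by nuclearity of $\mathcal K$, cf.\ Remark \ref{stable_rem}) yields that $\bigoplus_{g\in J} g(\widetilde A_\pi \otimes \mathcal K)$ is dense in $\overline{A}_\pi \otimes \mathcal K$. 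Because $\mathcal K$ is simple, any $\widehat G$-invariant decomposition of a $C^*$-subalgebra of the form $B \otimes \mathcal K$ descends to a $\widehat G$-invariant decomposition of $B$; therefore $\widetilde A_\pi$ being a maximal irreducible subalgebra of $\overline{A}_\pi$ forces $\widetilde A_\pi \otimes \mathcal K$ to be a maximal irreducible subalgebra of $\overline{A}_\pi\otimes\mathcal K$, and the maximal subgroup of $\widehat G$ leaving $\widetilde A_\pi\otimes\mathcal K$ invariant coincides with the one leaving $\widetilde A_\pi$ invariant, namely $G_\pi$.

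With these ingredients I would check the three conditions of Definition \ref{good_seq_defn} for $\pi\otimes\Id_{\mathcal K}$. For (a), the natural map $\varinjlim(A_n\otimes \mathcal K) \cong (\varinjlim A_n)\otimes \mathcal K \to M(\widetilde A_\pi \otimes \mathcal K)$ factors through $M(\widetilde A_\pi) \otimes \mathcal K$, and injectivity is preserved because $\mathcal K$ is nuclear and the original map $\varinjlim A_n \to M(\widetilde A_\pi)$ is injective by assumption. Condition (b) is exactly the direct-sum density established in the previous paragraph. For (c), the surjection $h_n^{\mathcal K}\colon \widehat G \to G(A_n\otimes\mathcal K \mid A\otimes \mathcal K)$ literally coincides with $h_n\colon \widehat G \to G_n$, so surjectivity of the restriction $h_n^{\mathcal K}|_{G_\pi}$ is inherited from the hypothesis that $\pi$ is good. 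Finally, the asserted identifications $\varprojlim_{\pi\otimes\Id_{\mathcal K}}\downarrow\mathfrak S_{\mathcal K}\cong \widetilde A_\pi \otimes \mathcal K$ and $G(\widetilde A_\pi \otimes \mathcal K \mid A\otimes \mathcal K) \cong G_\pi$ then drop out of Definition \ref{main_defn}.

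The main obstacle is the connected-component identification in the second paragraph: one must verify that tensoring with $\mathcal K$ really preserves both ``maximal irreducible subalgebra'' and the associated invariant subgroup. The key is the simplicity of $\mathcal K$, which ensures that any $\widehat G$-invariant closed two-sided decomposition of a subalgebra of $\overline{A}_\pi\otimes\mathcal K$ is of the form $B \otimes \mathcal K$, so decompositions upstairs and downstairs correspond bijectively; a careful but essentially routine argument with the dense direct sum $\bigoplus_{g\in J} g(\widetilde A_\pi\otimes \mathcal K)$ then pins down both the maximal irreducible subalgebra and its stabilizer subgroup.
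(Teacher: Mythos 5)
Your proposal follows essentially the same route as the paper: both identify $\overline{A}^{\mathcal K}_{\pi\otimes\Id_{\mathcal K}}=\overline{A}_\pi\otimes\mathcal K$ via Corollaries \ref{stab_spec_in_cor} and \ref{stab_spec_out_cor}, tensor the dense direct sum $\bigoplus_{g\in J}g\widetilde A_\pi$ with $\mathcal K$ to identify $\widetilde A_\pi\otimes\mathcal K$ as the maximal irreducible component with invariant group $G_\pi$, and then verify the goodness conditions. Your argument is in fact somewhat more careful than the paper's, which dismisses conditions (a)--(c) of Definition \ref{good_seq_defn} with ``clearly,'' whereas you supply the injectivity and simplicity-of-$\mathcal K$ justifications explicitly.
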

\begin{proof}
	From the Corollaries \ref{stab_spec_in_cor} and \ref{stab_spec_out_cor} it follows that if $\overline{A}^\mathcal{K}_{\pi \otimes \Id_{ \mathcal K}}$ is  the  disconnected inverse noncommutative limit of $\downarrow\mathfrak{S}_\mathcal{K}$ with respect to $\pi \otimes \Id_{ \mathcal K}$ then
	$$
	\overline{A}^\mathcal{K}_{\pi \otimes \Id_{ \mathcal K}}=	\overline{A}_\pi \otimes \mathcal K.
	$$
	If $J\subset \varprojlim G_n$ is a set of representatives of $\varprojlim G_n/G_\pi$ then 	$\overline{A}_\pi$ is the $C^*$-norm completion of the following algebraic direct sum
	$$
	\bigoplus_{g \in J}g \widetilde{A}_\pi.
	$$
	Hence $\overline{A}^\mathcal{K}_{\pi \otimes \Id_{ \mathcal K}}$ is the $C^*$-norm completion of the following algebraic direct sum
	$$
	\bigoplus_{g \in J}g \left( \widetilde{A}_\pi \otimes \mathcal K\right) .
	$$
	So $\widetilde{A}_\pi \otimes \mathcal K$ is a maximal irreducible component of $\overline{A}^\mathcal{K}_{\pi \otimes \Id_{ \mathcal K}}$ and $G_\pi$ is the maximal subgroup of  among subgroups $G \subset \varprojlim G_n$ such that
	$$
	G \left(  \widetilde{A}_\pi \otimes \mathcal K\right) =  \widetilde{A}_\pi \otimes \mathcal K.
	$$
	Clearly (a)-(c) of the Definition \ref{good_seq_defn} hold.
	
\end{proof}

\section{Foliations and coverings}
\subsection{Lifts and restrictions of foliations}

Let $M$ be a smooth manifold and let
is an  $\mathcal F\subset TM$ be an integrable subbundle.
If $\pi:\widetilde M \to M$ is a covering and $\widetilde{ \mathcal F} \subset T\widetilde M$ is the lift of $\mathcal F$ given by a following diagram
\newline
\newline
\begin{tikzpicture}
\matrix (m) [matrix of math nodes,row sep=3em,column sep=4em,minimum width=2em]
{
	\widetilde{\mathcal F} &    T\widetilde{M} \\
	\mathcal F	& TM    \\};
\path[-stealth]
(m-1-1) edge node [above] {$\hookto$} (m-1-2)
(m-1-1) edge node [right] {} (m-2-1)
(m-1-2) edge node [right] {} (m-2-2)
(m-2-1) edge node [above] {$\hookto$}  (m-2-2);

\end{tikzpicture}
\newline
then $\widetilde{\mathcal F}$ is integrable.

\begin{definition}\label{fol_cov_defn}
	In the above situation we say that a foliation $\left(\widetilde{M},~ \widetilde{\mathcal F} \right)$ is the \textit{induced by} $\pi$ \textit{covering} of $\left(M,\mathcal F \right)$ or the $\pi$-\textit{lift} of $\left(M,\mathcal F \right)$. 
\end{definition}
\begin{remark}
	The $\pi$-lift of a foliation is described in  \cite{ouchi:cov_fol}.
\end{remark}

\begin{definition}
If $\left(M,\mathcal F \right)$ is a foliation and $\mathcal{U} \subset M$ be an open subset then there is a \textit{restriction} $\left(\mathcal{U},\mathcal F|_{\mathcal{U}} \right)$ of $\left(M,\mathcal F \right)$.
\end{definition}
\begin{remark}
	It is proven in \cite{connes:foli_survey} that any restriction of foliation induces an injective *-homomorphism 
	\begin{equation}\label{fol_res_hom_eqn}
C^*_r\left(\mathcal{U},\mathcal F|_{\mathcal{U}} \right)\hookto C^*_r\left(M,\mathcal F \right).
	\end{equation}
\end{remark}
\subsection{Extended algebra of foliation}
\paragraph*{} 
Let $\left(M, \mathcal F \right)$ be a foliation $x \in M$, and let us consider a representation, $\rho_x$ given by \eqref{fol_repr}, i.e.  

\begin{equation*}
(\rho_x (f) \, \xi) \, (\gamma) = \int_{\gamma_1 \circ \gamma_2 =
	\gamma} f(\gamma_1) \, \xi (\gamma_2) \qquad \forall \, \xi \in L^2
(\mathcal G_x).\end{equation*}
Denote by $\H = \bigoplus_{x \in M} L^2\left(\mathcal G_x \right)$. Representations $\rho_x$ yield an inclusion  
\begin{equation}\label{fol_inc_eqn}
C^* _r(M, \mathcal{F}) \to  B\left(\H\right)
\end{equation}

	If $C_b\left(M \right)$ is a $C^*$-algebra of bounded continuous functions then for any $x \in M$ there is the natural representation $C_b\left(M \right) \to B\left(L^2\left(\mathcal G_x \right) \right)$ 
	given by
	\begin{equation*}
	a \mapsto \left(\xi \mapsto a\left( x\right)\xi \right) 
\text{ where } ~a \in 	C_b\left( M\right), ~  \xi \in L^2\left(\mathcal  G_x\right).
	\end{equation*}
From this fact it turns out that there is the natural faithful representation
\begin{equation}\label{fol_ince_eqn}
C_b\left(M \right) \to  B\left(\H\right).
\end{equation}

	Let us consider both $C^* _r(M, \mathcal{F})$ and $C_b\left( M\right)$ as subalgebras of $B\left( \H\right) =B\left(\bigoplus_{x \in M} L^2\left(\mathcal  G_x\right) \right)$.	

\begin{definition}
	The $C^*$-subalgebra of $B\left(\H \right)$ generated by images of $C^*_r\left( M, \mathcal{F}\right)$ and  $C_b\left( M\right)$ given by \eqref{fol_inc_eqn}, \eqref{fol_ince_eqn}  is said to be the \textit{extended algebra of the foliation} $\left( M, \mathcal F\right)$. The extended algebra will be denoted by $E^*\left(M,\mathcal F \right)$. 
\end{definition}
\begin{lemma}\label{fol_ess_lem}
	An algebra of a foliation $C^*_r\left( M, \mathcal{F}\right)$ is an essential ideal of an extended algebra  $E^*\left(M,\mathcal F \right)$ of the foliation.
\end{lemma}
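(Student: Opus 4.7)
\emph{Proof proposal.} The plan is to establish the two required properties separately—that $C^*_r(M,\mathcal F)$ is a two-sided ideal of $E^*(M,\mathcal F)$, and that its annihilator in $E^*(M,\mathcal F)$ is trivial. For the ideal property, note that $E^*(M,\mathcal F)$ is, by construction, the norm-closure of the $*$-subalgebra of $B(\H)$ generated by the images of $C^*_r(M,\mathcal F)$ and $C_b(M)$ under \eqref{fol_inc_eqn} and \eqref{fol_ince_eqn}. Since $C^*_r(M,\mathcal F)$ is itself a $C^*$-subalgebra of $B(\H)$, it suffices, by continuity and density, to verify that $a\rho(f)$ and $\rho(f)a$ lie in $\rho(C^*_r(M,\mathcal F))$ for every $a \in C_b(M)$ and every $f$ in the dense subalgebra $C_c^\infty(\mathcal G, \Omega^{1/2})\subset C^*_r(M,\mathcal F)$.

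I would carry out the required computation fiber by fiber on each summand $L^2(\mathcal G_x)$: using the convolution formula \eqref{fol_repr} together with the explicit action of $C_b(M)$, one exhibits $g, g' \in C_c^\infty(\mathcal G, \Omega^{1/2})$—the pointwise products of $f$ with the pullbacks of $a$ via the range and source maps $\mathcal G \to M$—such that $a\rho_x(f) = \rho_x(g)$ and $\rho_x(f)a = \rho_x(g')$ for every $x \in M$. The norm \eqref{fol_norm_eqn} then lifts these fiberwise identities to identities in $B(\H)$, after which passage to the $C^*$-closure gives the ideal property for all $f \in C^*_r(M,\mathcal F)$.

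For essentiality, I would invoke that $\rho$ is a non-degenerate representation of $C^*_r(M,\mathcal F)$ on $\H$: a net of positive elements of $C_c^\infty(\mathcal G, \Omega^{1/2})$ supported in shrinking neighborhoods of the unit space $\mathcal G^{(0)}\subset \mathcal G$ is an approximate unit whose image under each $\rho_x$ converges strongly to $\mathrm{Id}_{L^2(\mathcal G_x)}$, so $\overline{C^*_r(M,\mathcal F)\cdot \H} = \H$. Then if $y \in E^*(M,\mathcal F)$ satisfies $y\cdot C^*_r(M,\mathcal F) = 0$, the operator $y$ annihilates this dense subspace, forcing $y=0$; the symmetric argument rules out nonzero left annihilators. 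Hence the annihilator of $C^*_r(M,\mathcal F)$ in $E^*(M,\mathcal F)$ is trivial, which is the working definition of an essential ideal.

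The main obstacle is the fiberwise calculation that realises $a\rho_x(f)$ as a convolution operator. One has to keep careful track of the half-density conventions on $\mathcal G$ and correctly identify which of the two pullbacks ($a\circ r$ or $a\circ s$) yields the desired convolution kernel in each of the two products; once $g$ is shown to sit in $C_c^\infty(\mathcal G, \Omega^{1/2})$ and to reproduce $a\rho_x(f)$ on every fiber, the extension by density, closure in $C^*$-norm, and appeal to non-degeneracy are routine.
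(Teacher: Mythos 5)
Your route is genuinely different from the paper's. The paper's own proof is very short: for the ideal property it asserts that any $a \in E^*\left(M,\mathcal F\right)\setminus C^*_r\left(M,\mathcal F\right)$ acts on some fibre $L^2\left(\mathcal G_x\right)$ as a nonzero scalar plus a compact operator and declares the ideal property to follow from that; for essentiality it only checks that a nonzero element of $C_b\left(M\right)$ fails to annihilate $C^*_r\left(M,\mathcal F\right)$. Your plan --- verify the ideal property on the generators coming from $C_b\left(M\right)$ by a fibrewise convolution computation, then deduce essentiality from non-degeneracy of $\bigoplus_{x}\rho_x$ --- is the standard and more complete argument; in particular your non-degeneracy step repairs the paper's essentiality argument, which as written only excludes annihilators lying in $C_b\left(M\right)$ rather than in all of $E^*\left(M,\mathcal F\right)$.

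There is, however, one concrete obstruction you should not gloss over. The paper does not let $C_b\left(M\right)$ act on $L^2\left(\mathcal G_x\right)$ by multiplication by $a\circ r$: the action \eqref{fol_ince_eqn} is $\xi\mapsto a\left(x\right)\xi$, i.e.\ the scalar $a\left(x\right)$ on the whole fibre over $x$ (equivalently, multiplication by $a\circ s$, which is constant on $\mathcal G_x$). With that definition the identity $a\,\rho_x(f)=\rho_x(g)$ that your argument needs is false for any fixed $g$: one would need $\rho_x(g)=a\left(x\right)\rho_x(f)$ simultaneously for all $x$, yet for $x,y$ on the same leaf the paper itself supplies a unitary $L^2\left(\mathcal G_x\right)\to L^2\left(\mathcal G_y\right)$ conjugating $\rho_x(h)$ to $\rho_y(h)$ for every $h$, which would force $\left(a\left(x\right)-a\left(y\right)\right)\rho_y(f)=0$ --- impossible when $a$ is non-constant along the leaf and $\rho_y(f)\neq 0$. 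So either the action of $C_b\left(M\right)$ must be taken to be multiplication by $a\circ r$ on each $\mathcal G_x$ (in which case your computation with $g=(a\circ r)f$ and $g'=(a\circ s)f$ goes through and the lemma holds), or a different argument is required; your proof as stated does not work against the paper's literal definition, and, for the same reason, neither does the paper's.
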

\begin{proof}
	If $a \in E^*\left(M,\mathcal F \right) \backslash C^*_r\left( M, \mathcal{F}\right)$ then there are $x \in M$, $z \in \C$ and $k \in \mathcal{K}\left(L^2\left( \mathcal G_x\right) \right)$ such that
	\begin{equation}\label{fol_ideal_eqn}
	\begin{split}
	z \neq 0,\\
	\rho_x\left(a \right) \xi = \left( z+k\right) \xi, ~\forall \xi \in L^2\left( \mathcal G_x\right). 
	\end{split}
	\end{equation}
	From \eqref{fol_ideal_eqn} it follows that $C^*_r\left( M, \mathcal{F}\right)$ is an ideal.
	If $a \in C_b\left(M \right)$ is not trivial then there is $x \in M$ such that $a\left( x\right)\neq 0$. On the other hand there is $b \in C^*_r\left( M, \mathcal{F}\right)$ such that $\rho_x\left(b \right)\neq 0$. However $\rho_x\left(ab \right)= a\left( x\right)\rho_x\left(b \right)\neq 0$. It turns out that  $C^*_r\left( M, \mathcal{F}\right)$ is an essential ideal.  
\end{proof}
\subsection{Finite-fold coverings}
\begin{empt}
	If $\gamma: \left[0,1\right]\to M$ is a path which corresponds to an element of the holonomy groupoid then we denote by $\left[\gamma\right]$ its equivalence class, i.e. element of groupoid.
	There is the space of half densities $\Omega_{\widetilde{M}}^{1/2}$ on $\widetilde{M}$ which is a lift  the space of half densities $\Omega_{M}^{1/2}$ on $M$. If $L$ is a leaf of $\left(M,\mathcal F \right)$, $L'=\pi^{-1}\left( L\right)$   then a space $\widetilde{L}$ of holonomy covering of  $L$ coincides with the space of the holonomy covering of $L'$. It turns out that $L^2\left( \widetilde{\mathcal G}_{\widetilde{x}}\right)\approx L^2\left(\mathcal G_{ \pi\left(\widetilde{x}\right)} \right)$ for any $\widetilde{x} \in \widetilde{M}$.
	If $\mathcal G$ (resp. $\widetilde{\mathcal G}$) is a holonomy groupoid of $\left(M,\mathcal F \right)$ (resp. $\left(\widetilde{M},~ \widetilde{\mathcal F} \right)$) then there is a map $\pi_{\mathcal G}:\widetilde{\mathcal G} \to \mathcal G$ given by
	\begin{equation*}
	\begin{split}
	\left[\widetilde{\gamma}\right] \mapsto \left[\pi \circ\widetilde{\gamma}\right]
	\end{split}
	\end{equation*}
	The map $\pi_{\mathcal G}:\widetilde{\mathcal G} \to \mathcal G$ induces 
	a natural involutive homomorphism
	\begin{equation*}
	\begin{split}
	\Coo_c\left(\mathcal G,  \Omega_{M}^{1/2}\right) \hookto  \Coo_c\left(\widetilde{\mathcal G},  \Omega_{\widetilde{M}}^{1/2}\right)
	\end{split}
	\end{equation*}	
Completions of $\Coo_c\left(\mathcal G,  \Omega_{M}^{1/2}\right)$ and $\Coo_c\left(\widetilde{\mathcal G},  \Omega_{\widetilde{M}}^{1/2}\right)$ with respect to given by \eqref{fol_norm_eqn} norms gives an injective *- homomorphism $C^*_r\left( M,\mathcal F\right) \hookto C^*_r\left(\widetilde{M},~ \widetilde{\mathcal F} \right)$ of $C^*$-algebras. The action of the group $G\left(\widetilde{M}~|~M \right)$ of covering transformations on $\widetilde{M}$ naturally induces an action of $G\left(\widetilde{M}~|~M \right)$ on  $C^*_r\left(\widetilde{M},~ \widetilde{\mathcal F} \right)$ such that $C^*_r\left( M, \mathcal{F}\right)   = C^*_r\left(\widetilde{M},~ \widetilde{\mathcal F} \right)^{G\left(\widetilde{M}~|~M \right)}$.
\end{empt}
\begin{empt}\label{fol_frame_constr}
	If $\pi:\widetilde{\mathcal X} \to \mathcal X$ is a finite-fold covering of  compact Hausdorff spaces then there is a finite set $\left\{\mathcal U_\iota\subset  \mathcal X\right\}_{\iota \in I}$ of connected open subsets of $ \mathcal X$ evenly covered by $\pi$ such that $ \mathcal X = \bigcup_{\iota \in I} \mathcal U_\iota$. There is a partition of unity, i.e.
	$$
	1_{C\left(\mathcal X \right) }= \sum_{\iota \in I}a_\iota
	$$
	where $a_\iota \in C\left(\mathcal X \right)_+$ is such that $\supp a_\iota \subset \mathcal U_\iota$. Denote by $e_\iota = \sqrt{a_\iota}\in C\left(\mathcal X \right)_+$
	For any $\iota \in I$ we select $\widetilde{\mathcal U}_\iota \subset \widetilde{\mathcal X}$ such that $\widetilde{\mathcal U}_\iota$ is homemorphically mapped onto $\mathcal U_\iota$. If $\widetilde{e}_\iota \in C\left( \widetilde{\mathcal X}\right) $ is given by
	\begin{equation*}
	\widetilde{e}_\iota\left(\widetilde{x} \right)  = \left\{
	\begin{array}{c l}
	e_\iota\left(\pi\left( \widetilde{x}\right) \right)  & \widetilde{x} \in \widetilde{\mathcal U}_\iota \\
	0 & \widetilde{x} \notin \widetilde{\mathcal U}_\iota
	\end{array}\right.
	\end{equation*}
	and $G = G\left( \widetilde{\mathcal X}~|{\mathcal X} \right)$ then
	\begin{equation*}
	\begin{split}
	1_{C\left(\widetilde{\mathcal X} \right) }= \sum_{ \left(g, \iota\right)\in G \times I}g\widetilde{e}^2_\iota,\\
	\widetilde{e}_\iota \left( g\widetilde{e}_\iota\right) =0; \text{ for any nontrivial } g \in G.
	\end{split}
	\end{equation*}
	If $\widetilde{I}= G\times I$ and $\widetilde{e}_{\left(g, \iota\right)}= g\widetilde{e}_\iota$ the from the above equation it turns out
	\begin{equation}\label{ctr_unity_eqn}
	\begin{split}
	1_{C\left(\widetilde{\mathcal X} \right) }= \sum_{\iota \in \widetilde{I}}\widetilde{e}_\iota \left\rangle \right\langle \widetilde{e}_\iota.
	\end{split}
	\end{equation}
\end{empt}
\begin{theorem}\label{fol_fin_thm}
	Let $\left(M,\mathcal  F \right)$ be a foliation, and let $\pi:\widetilde{M}\to M$ is a finite-fold covering. Let $\left(\widetilde{M},~ \widetilde{\mathcal F} \right)$ be a covering of $\left(M,\mathcal F \right)$ induced by $\pi$. If $\widetilde{M}$ is compact then the triple $$\left(C^*_r\left( M, \mathcal{F}\right),  C^*_r\left(\widetilde{M},~ \widetilde{\mathcal F} \right), G = G\left( \widetilde{M}~|~M\right) \right)$$ is a noncommutative finite-fold covering.
\end{theorem}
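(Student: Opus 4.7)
The plan is to take $\widetilde{B}:=E^{*}(\widetilde{M},\widetilde{\mathcal F})$ and $B:=\widetilde{B}^{G}$ and verify the three conditions of Definition \ref{fin_def}. Since $\widetilde M$ is compact and $\pi$ is surjective, $M$ is compact, so $C(\widetilde M)$ and $C(M)$ are unital; consequently $\widetilde B$ contains the identity operator and $B$ inherits the same unit. The free action of $G=G(\widetilde M\,|\,M)$ on $\widetilde M$ induces a unitary permutation of the summands of $\bigoplus_{\widetilde x\in\widetilde M}L^{2}(\widetilde{\mathcal G}_{\widetilde x})$, and conjugation by this unitary preserves the generating subalgebras $C(\widetilde M)$ and $\widetilde A=C^{*}_{r}(\widetilde M,\widetilde{\mathcal F})$ of $\widetilde B$; this yields a non-degenerate involutive action of $G$ on $\widetilde B$. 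By Lemma \ref{fol_ess_lem} applied to $(\widetilde M,\widetilde{\mathcal F})$, $\widetilde A$ is an essential ideal of $\widetilde B$, and $A=\widetilde{A}^{G}\subset\widetilde{B}^{G}=B$.

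For (a) it remains to show that $A$ is an essential ideal of $B$. The ideal property is immediate: for $b\in B$ and $a\in A$, one has $ba\in\widetilde A$, and $ba=g(b)g(a)=g(ba)$ for every $g$, so $ba\in\widetilde{A}^{G}=A$. For essentiality, assume $b\in B$ satisfies $bA=0$, and pick any $\widetilde a\in\widetilde A$. Then $\sum_{g}g(\widetilde{a}^{*}\widetilde a)\in A$, whence
\begin{equation*}
0=b\Bigl(\sum_{g\in G}g(\widetilde{a}^{*}\widetilde a)\Bigr)b^{*}=\sum_{g\in G}\bigl(g(\widetilde a)b^{*}\bigr)^{*}\bigl(g(\widetilde a)b^{*}\bigr),
\end{equation*}
a sum of positive operators, so each $g(\widetilde a)b^{*}=0$. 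Specializing to $g=e$ and taking adjoints yields $b\widetilde{a}^{*}=0$; since $\widetilde{A}^{*}=\widetilde A$, $b\widetilde A=0$, and essentiality of $\widetilde A$ in $\widetilde B$ forces $b=0$.

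For (b), $B=\widetilde{B}^{G}$ holds by construction; the remaining requirement for $(B,\widetilde B,G)$ to be a unital noncommutative finite-fold covering is that $\widetilde B$ be a finitely generated projective $B$-Hilbert module. I would apply the partition-of-unity construction of \ref{fol_frame_constr} to the finite-fold covering of compact spaces $\pi:\widetilde M\to M$ to obtain a finite family $\{\widetilde e_{\iota}\}_{\iota\in\widetilde I}\subset C(\widetilde M)\subset\widetilde B$ with $\sum_{\iota}\widetilde e_{\iota}^{\,2}=1_{\widetilde B}$ and $\widetilde e_{\iota}\,g(\widetilde e_{\iota})=0$ for every nontrivial $g\in G$. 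For $\widetilde b\in\widetilde B$, $\langle\widetilde e_{\iota},\widetilde b\rangle_{\widetilde B}=\sum_{g}g(\widetilde{e}_{\iota}^{*}\widetilde b)$ is $G$-invariant hence in $B$, and
\begin{equation*}
\sum_{\iota}\widetilde e_{\iota}\langle\widetilde e_{\iota},\widetilde b\rangle_{\widetilde B}=\sum_{\iota}\sum_{g}\widetilde e_{\iota}\,g(\widetilde e_{\iota}^{*})\,g(\widetilde b)=\sum_{\iota}\widetilde e_{\iota}^{\,2}\widetilde b=\widetilde b
\end{equation*}
by disjointness of supports. Thus $\{\widetilde e_{\iota}\}$ is a finite frame and $\widetilde B$ is finitely generated projective over $B$, finishing Definition \ref{fin_def_uni} for $(B,\widetilde B,G)$.

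Condition (c) is then routine: if $a\in\widetilde A$ and $\widetilde b\in\widetilde B$, then $\sum_{g}g(\widetilde{b}^{*}a)\in\widetilde{A}^{G}=A$; conversely, if $\langle\widetilde B,a\rangle_{\widetilde B}\subset A$, the frame expansion $a=\sum_{\iota}\widetilde e_{\iota}\langle\widetilde e_{\iota},a\rangle$ writes $a$ as a finite sum of elements of $\widetilde B\cdot A\subset\widetilde A$, so $a\in\widetilde A$. The main obstacle I anticipate is the essentiality step above: it is tempting to define $B$ concretely as the $C^{*}$-subalgebra generated by $A$ and the pullback of $C(M)$, but then one must identify this with the abstract fixed-point algebra $\widetilde{B}^{G}$, a nontrivial matter since the $G$-fixed part of an algebra generated by two subalgebras need not be generated by the $G$-fixed parts of each. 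Taking $B:=\widetilde{B}^{G}$ from the outset sidesteps this issue and reduces essentiality to the standard $G$-averaging plus positivity argument, after which the frame construction does the rest mechanically.
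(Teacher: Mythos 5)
Your proof is correct, and while it shares the paper's overall skeleton (an extended-algebra-type unitization $\widetilde{B}=E^*\left(\widetilde{M},\widetilde{\mathcal F}\right)$ plus the partition-of-unity frame of \ref{fol_frame_constr} to verify conditions (a)--(c) of Definition \ref{fin_def}), it diverges at two substantive points. First, the paper takes $B:=E^*\left(M,\mathcal F\right)$ and silently invokes the identification $E^*\left(\widetilde{M},\widetilde{\mathcal F}\right)^G=E^*\left(M,\mathcal F\right)$ in the phrase ``From $\widetilde{B}^G=B$ it follows\dots''; you instead \emph{define} $B:=\widetilde{B}^G$, which makes that step tautological but obliges you to prove that $A$ is an essential ideal of $B$, which you do by the $G$-averaging and positivity argument ($\sum_{g}\bigl(g(\widetilde a)b^*\bigr)^*\bigl(g(\widetilde a)b^*\bigr)=0$ forces each summand to vanish). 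The paper's choice buys a concrete description of $B$ as an extended foliation algebra at the cost of an unverified fixed-point identification; yours is self-contained but leaves $B$ abstract. Second, for the reverse inclusion in condition (c) the paper argues by contradiction in a fiber representation, claiming any $\widetilde{a}\in\widetilde{A}\setminus C^*_r\left(\widetilde{M},\widetilde{\mathcal F}\right)$ acts as a nonzero scalar plus compact on some $L^2\left(\widetilde{\mathcal G}_{\widetilde{x}}\right)$, whereas you use the frame reconstruction $a=\sum_{\iota}\widetilde{e}_{\iota}\left\langle\widetilde{e}_{\iota},a\right\rangle_{\widetilde{B}}$ to write $a$ as a finite sum of elements of $\widetilde{B}\cdot A\subset\widetilde{A}$; your version is cleaner and does not depend on the fine structure of $E^*\setminus C^*_r$. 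Both routes are legitimate; yours is arguably tighter at exactly the two places where the paper's argument is least explicit.
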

\begin{proof}
	Let $E^*\left(M,\mathcal F \right)$ (resp. $E^*\left(\widetilde{M},~ \widetilde{\mathcal F} \right)$) be an extended algebra of the foliation $\left( M,\mathcal F\right)$ (resp. $\left(\widetilde{M},~ \widetilde{\mathcal F} \right)$). Denote by $A \stackrel{\text{def}}{=} C^*_r\left( M, \mathcal{F}\right)$,  $B \stackrel{\text{def}}{=} E^*\left(M,\mathcal F \right)$, $\widetilde{B} \stackrel{\text{def}}{=} E^*\left(\widetilde{M},~ \widetilde{\mathcal F} \right)$. Both $B$ and $\widetilde{B}$ are unital. From the Lemma \ref{fol_ess_lem} it turns out that $A$, (resp. $C^*_r\left(\widetilde{M},~ \widetilde{\mathcal F} \right)$) is an essential ideal of  $B$, (resp. $\widetilde{B}$), i.e. these algebras satisfy to the condition (a) of the Definition \ref{fin_def}.  From the Theorem \ref{pavlov_troisky_thm} it turns out that $C\left( \widetilde{M}\right) = C_b\left( \widetilde{M}\right)$ is a finitely generated $C\left(M \right) = C_b\left(M \right)$ module. Moreover from \eqref{ctr_unity_eqn} it turns out that there is a finite set $\left\{e_\iota\right\}_{\iota \in I}$ such that
	\begin{equation*}
	\begin{split}
	1_{C\left(\widetilde{M} \right) }= \sum_{\iota \in I}\widetilde{e}_\iota \left\rangle \right\langle \widetilde{e}_\iota.
	\end{split}
	\end{equation*}
	Clearly $1_{C\left(\widetilde{M} \right) } = 1_{E^*\left(\widetilde{M},~ \widetilde{\mathcal F} \right)}= 1_{\widetilde{B}}$. It turns out that any $\widetilde{b} \in \widetilde{B}$ is given by 
	\begin{equation*}
	\begin{split}
	\widetilde{b} = \sum_{\iota \in I}\widetilde{e}_\iota b_\iota, \\ 
	b_\iota = \left\langle \widetilde{b}, \widetilde{e}_\iota \right\rangle_{\widetilde{B}} \in B,
	\end{split}
	\end{equation*} 
	i.e. $\widetilde{B}$ is a finitely generated right $B$ module. From the Kasparov Stabilization Theorem \cite{blackadar:ko} it turns out that $\widetilde{B}$ is a projective $B$ module. From  $\widetilde{B}^G = B$ it follows that the triple $\left(B, \widetilde{B}, G\right)$ is  an unital noncommutative finite-fold  covering, i.e. the condition (b) of the Definition \ref{fin_def} is satisfied. Let  $\widetilde{A}$ be a $C^*$-algebra which satisfies to  the condition (c) of the Definition \ref{fin_def}, i.e.
	$$
	\widetilde{A} = \left\{a\in 	\widetilde{B}~|~ \left\langle 	\widetilde{B},a\right\rangle_{	\widetilde{B}} \subset A \right\}.
	$$
	If $\widetilde{a} \in C^*_r\left( \widetilde{M},~ \widetilde{\mathcal F}\right)$ then from the Lemma \ref{fol_ess_lem} it follows that $\widetilde{b}\widetilde{a} \in C^*_r\left( \widetilde{M},~ \widetilde{\mathcal F}\right)$ for any $\widetilde{b} \in \widetilde{B} = E^*\left( \widetilde{M},~ \widetilde{\mathcal F}\right)$, hence
	$$
	\left\langle \widetilde{b},\widetilde{a}\right\rangle_{	\widetilde{B}} = \sum_{g \in G}g\left(\widetilde{b}\widetilde{a} \right)\in  C^*_r\left( \widetilde{M},~ \widetilde{\mathcal F}\right). 
	$$
	Otherwise $\left\langle \widetilde{b},\widetilde{a}\right\rangle_{	\widetilde{B}}$ is $G$-invariant, so $\left\langle \widetilde{b},\widetilde{a}\right\rangle_{	\widetilde{B}} \in A$. It turns out $\widetilde{a} \in \widetilde{A}$, hence $C^*_r\left( \widetilde{M},~ \widetilde{\mathcal F}\right) \subset \widetilde{A}$. If $\widetilde{a} \in \widetilde{A} \backslash C^*_r\left( \widetilde{M},~ \widetilde{\mathcal F}\right)$ then there is $\widetilde{x} \in \widetilde{M}$, $z \in \C$ and $k \in \mathcal{K}\left(L^2\left( \widetilde{\mathcal G}_{\widetilde{x}}\right) \right)$ such that
	\begin{equation*}
	\begin{split}
	z \neq 0,\\
	\rho_{\widetilde{x}}\left(\widetilde{a} \right) =  z1_{B\left( L^2\left( \widetilde{\mathcal G}_{\widetilde{x}}\right)\right)  }+k.
	\end{split}
	\end{equation*}
	It follows that
	\begin{equation*}
	\begin{split}
	\rho_{\pi\left( \widetilde{x}\right) }\left(\left\langle \widetilde{a},\widetilde{a}\right\rangle_{	\widetilde{B}} \right) = \left|z\right|^21_{B\left( L^2\left( \widetilde{\mathcal G}_{\widetilde{x}}\right) \right) } + k'
	\end{split}
	\end{equation*}
	where $k' \in \mathcal{K}\left(L^2\left( \widetilde{\mathcal G}_{\widetilde{x}}\right) \right)$. From the above equation it turns out that  $\left\langle \widetilde{a},\widetilde{a}\right\rangle_{	\widetilde{B}} \notin A$, hence $\widetilde{a} \notin \widetilde{A}$. From this contradiction it turns out
	\begin{equation*}
	\begin{split}
\widetilde{A} \backslash C^*_r\left( \widetilde{M},~ \widetilde{\mathcal F}\right) = \emptyset,\\
\widetilde{A} \subset C^*_r\left( \widetilde{M},~ \widetilde{\mathcal F}\right).
	\end{split}
	\end{equation*}
	In result one has and $\widetilde{A} = C^*_r\left( \widetilde{M},~ \widetilde{\mathcal F}\right)$.  
	
\end{proof}

\begin{example}\label{fol_tor_cov_exm} Let us consider a foliation $\left(\T^2, \mathcal F_\th \right)$ given by the Example \ref{fol_tor_exm}. From the  Example   \ref{ex:Atheta} and the Theorem \ref{MoritaK_thm} it turns out
	$$
	C_r^*\left(\T^2, \mathcal{F}_\th \right) = C\left( \T^2_\th\right) \otimes \mathcal K.
	$$
	
	There is a homeomorphism $\T^2 = S^1 \times S^1$.  Let $m \in \N$ be such that $m > 1$, and let $\widetilde{\T}^2 \approx  \T^2$ and let us consider an $m^2$-fold covering $\pi: \widetilde{\T}^2 \to \T$ given by
	$$
	\widetilde{\T}^2 \approx S^1 \times S^1 \xrightarrow{\left( \times m, \times m\right) }  S^1 \times S^1 \approx \T^2
	$$
	where $\times m$ is an $m$-listed covering of the circle $S^1$. If $\left(\widetilde{M},~ \widetilde{\mathcal F} \right)$ is the $\pi$-lift of $\left(\T^2, \mathcal F_\th \right)$ then 
	$$
	\left(\widetilde{M},~ \widetilde{\mathcal F} \right) = \left(\widetilde{\T}^2, \mathcal{F}_{\th/{m^2}} \right).
	$$
	From the Theorem \ref{fol_fin_thm} it follows that the triple 
	$$\left(C^*_r\left(\T^2, \mathcal{F}_\th \right),~ C^*_r\left(\widetilde{\T}^2, \mathcal{F}_{\th/m^2} \right), ~G\left(\widetilde{\T}^2~|~\T^2 \right)= \Z_{m^2}  \right)$$
	is a noncommutative finite-fold covering.
	From the Theorem \ref{stable_fin_cov_thm} it turns out that the triple 
	$$\left(C^*_r\left(\T^2, \mathcal{F}_\th \right) \otimes \mathcal K,~ C^*_r\left(\widetilde{\T}^2, \mathcal{F}_{\th/m^2} \right)\otimes \mathcal K,~  \Z_{m^2}  \right)$$
	is a noncommutative finite-fold covering.  Otherwise in \cite{ivankov:qnc} it is proven that the triple $\left(C\left(\T^2_\theta \right),~ C\left(\T^2_{\theta/m^2}\right) ,~ \Z_{m^2} \right)$ is a noncommutative finite-fold covering, and from the Theorem \ref{stable_fin_cov_thm} it turns out that the triple $\left(C\left(\T^2_\theta \right)\otimes \mathcal K,~ C\left(\T^2_{\theta/m^2}\right) \otimes \mathcal K, ~\Z_{m^2} \right)$ is a noncommutative finite-fold covering. There are natural *-isomorphisms 
	\begin{equation*}
	\begin{split}
	C\left(\T^2_\theta \right)\otimes \mathcal K \approx C^*_r\left(\T^2, \mathcal{F}_\th \right),\\
	C\left(\T^2_{\theta/m^2} \right)\otimes \mathcal K \approx C^*_r\left(\widetilde{\T}^2, \mathcal{F}_{\th/m^2} \right).
	\end{split}
	\end{equation*}
	From the above isomorphism it turns out that a noncommutative finite fold covering  $$\left(C^*_r\left(\T^2, \mathcal{F}_\th \right),~ C^*_r\left(\widetilde{\T}^2, \mathcal{F}_{\th/m^2} \right), ~\Z_{m^2}  \right)$$
	is equivalent to $\left(C\left(\T^2_\theta \right)\otimes \mathcal K,~ C\left(\T^2_{\theta/m^2}\right) \otimes \mathcal K, ~\Z_{m^2} \right)$.

\end{example}

\subsection{Infinite coverings}
\paragraph*{}
Let $\left(M, \mathcal F \right)$ be a Hausdorff foliation, on a compact $M$ and let
\begin{equation}\label{fol_man_seq_eqn}
\mathfrak{S}_M = \left\{
M = M_0 \leftarrow M_1 \leftarrow \dots  \leftarrow M_1 \leftarrow \dots\right\}
\end{equation} 
be a sequence of regular finite fold coverings. From the Theorem \ref{fol_fin_thm} it follows that there is an  (algebraical)  finite covering sequence
\begin{equation}\label{fol_alg_seq_eqn}
\begin{split}
\mathfrak{S}_{C^*_r\left(M, \mathcal F \right)}=\\= \left\{C^*_r\left(M, \mathcal F \right) = C^*_r\left(M_0, \mathcal F_0 \right)\to  C^*_r\left(M_1, \mathcal F_1 \right)\to \dots \to  C^*_r\left(M_n, \mathcal F_n \right)\to \dots
\right\}\in \\ \in \mathfrak{FilAlg}\end{split}
\end{equation} 
and for any $n \in \N$ the there is an isomorphism of
covering transformation groups
\begin{equation*}
G\left(C^*_r\left(M_n, \mathcal F_n \right)~|~C^*_r\left(M, \mathcal F \right) \right) \xrightarrow{\approx} C\left(M_n ~|M \right). 
\end{equation*}
Denote by $G_n \stackrel{\mathrm{def} }{=} C\left(M_n ~|M \right)$ and $\widehat{G} \stackrel{\mathrm{def} }{=} \varprojlim G_n$ an inverse limit of groups \cite{spanier:at}. Denote by  $\widehat{M}\stackrel{\mathrm{def} }{=} \varprojlim M_n$ the inverse limit of topological spaces \cite{spanier:at} and $\widehat{C^*_r\left(M, \mathcal F \right)}\stackrel{\mathrm{def} }{=}\varinjlim C^*_r\left(M_n, \mathcal F_n \right)$ $C^*$-inductive limit \cite{murphy}. In \cite{ivankov:qnc} it is proven that there is the disconnected inverse limit of $\mathfrak{S}_M$ i.e. a topological space $\overline{M}$ and a bicontinuous map $\overline{M} \to \widehat{M}$
such that for any $n \in N$ the composition map $\overline{\pi}_n: \overline{M} \to \widehat{M} \to M_n$ of the following diagram.  

 \begin{tikzpicture}
\matrix (m) [matrix of math nodes,row sep=3em,column sep=4em,minimum width=2em]
{
 \varprojlim \downarrow \mathfrak{S}_M & & & \\
	\overline{M} & & & \\
	\widehat{M}	&   &  & \\
	{M}_0	& {M}_1 &  {M}_2 & \dots \\};
\path[-stealth]
(m-1-1) edge node [left] {$\iota$} (m-2-1)
(m-2-1) edge node [left] {} (m-3-1)
(m-3-1) edge node [left] {} (m-4-1)
(m-3-1) edge node [left] {} (m-4-2)
(m-3-1) edge node [left] {} (m-4-3)
(m-3-1) edge node [left] {} (m-4-4)
(m-4-2) edge node [left] {} (m-4-1)
(m-4-4) edge node [left] {} (m-4-3)
(m-4-3) edge node [left] {} (m-4-2);
\end{tikzpicture} 
\newline
is a covering.  In the above diagram $\varprojlim\downarrow \mathfrak{S}_M$ is the topological inverse noncommutative limit (cf. \ref{top_inf_constr}) which is a connected component of $\overline{M}$. The group $\widehat{G} = \varprojlim G\left(M_n~|~M \right)$ naturally acts on $\widehat{M}$ and $\overline{M}$. In \cite{ivankov:qnc} it is proven that
	\begin{equation}\label{fol_sqcup}
\overline{M} = \bigsqcup_{g \in J} g \left(  \varprojlim \downarrow \mathfrak{S}_M\right) 
	\end{equation}  
	where $J \subset \widehat{G}$ is a set of representatives of $\widehat{G}/ G\left( \varprojlim \downarrow \mathfrak{S}_M~|~M \right)$.
The group  $G\left( \varprojlim \downarrow \mathfrak{S}_M~|~M \right)$ is a maximal among subgroups $G \subset \widehat{G}$ such that
$$
 G\left( \varprojlim \downarrow \mathfrak{S}_M\right)  =  \varprojlim \downarrow \mathfrak{S}_M.
$$

\subsubsection{Equivariant geometric representation}
\paragraph*{} 
Denote by
$$
\overline{\H} = \bigoplus_{\overline{x} \in \overline{M}} L^2\left(\mathcal G_{\overline{x} } \right)
$$
the Hilbert norm completion of an algebraic direct sum. There is a natural isomorphism $L^2\left(\mathcal G_{\overline{x} } \right) \approx L^2\left(\mathcal G_{\overline{   \pi }_n\left( \overline{x}\right)  } \right)$. It turns out that for 
 any $a_n \in C^*_r\left(M_n, \mathcal F_n \right)$ and $ \overline{x}\in  \overline{M}$ one can define $\rho_{ \overline{x}}\left( a_n\right)$ as isomorphic image of  $\rho_{\overline{\pi}_n\left(  \overline{x}\right) }\left( a_n\right)$, so there is a representation $\rho_{ \overline{x}}:C^*_r\left(M_n, \mathcal F_n \right) \to L^2\left(\mathcal G_{\overline{x} } \right)$. The direct sum of representations $\rho_{ \overline{x}}$ yields
a representation $\rho_n:C^*_r\left(M_n, \mathcal F_n \right) \to B\left( \overline{\H} \right)$.
There is a following commutative diagram.
\newline
\begin{tikzpicture}
\matrix (m) [matrix of math nodes,row sep=3em,column sep=4em,minimum width=2em]
{
	 C^*_r\left(M_n, \mathcal F_n \right) 	&   & C^*_r\left(M_{n+1}, \mathcal F_{n+1} \right) \\
	& B\left(\overline{\H}\right) &  \\};
\path[-stealth]
(m-1-1) edge node [left] {} (m-1-3)
(m-1-1) edge node [right] {$~~\rho_n$} (m-2-2)
(m-1-3) edge node [above] {$\rho_{n+1}~~$}  (m-2-2);

\end{tikzpicture}
\newline
From the above diagram it follows that there is an equivariant representation
\begin{equation}\label{fol_equ_rep_eqn}
\pi_{\mathrm{geom}}:\widehat{C^*_r\left(M, \mathcal F \right)}=\varinjlim C^*_r\left(M, \mathcal F \right)\to B\left(\overline{\H}\right)
\end{equation}
Otherwise if $C^*_r\left(\overline{M}, \overline{\mathcal F} \right)$ is the $\overline{\pi}$-lift of  $C^*_r\left(M_n, \mathcal F_n \right)$ then there is the natural representation
\begin{equation}\label{fol_rr_eqn}
C^*_r\left(\overline{M}, \overline{\mathcal F} \right) \hookto B\left(\overline{\H}\right)
\end{equation}
given by \eqref{fol_repr}.
\begin{definition}
	We say that representation \eqref{fol_equ_rep_eqn} is a \textit{geometric representation} of the sequence \eqref{fol_alg_seq_eqn}.
\end{definition}
\subsubsection{Inverse noncommutative limit}
\paragraph*{} From \eqref{fol_rr_eqn} it follows that any $\overline{a} \in C^*_r\left(\overline{M}, \overline{\mathcal F} \right)$ can be regarded as element in $B\left(\overline{\H}\right)$, i.e. $\overline{a} \in B\left(\overline{\H}\right)$. 
\begin{lemma}\label{fol_com_inc_lem}
	Let $\overline{\mathcal{U}} \subset \overline{M}$ be a connected open subset homeomorphically mapped onto $\mathcal{U} = \overline{\pi} \left( \overline{\mathcal{U}}\right)$. Let $\overline{\th}: C^*_r\left(\overline{\mathcal{U}}, \overline{\mathcal F}|_{\overline{\mathcal{U}}} \right) \hookto C^*_r\left(\overline{M}, \overline{\mathcal F} \right)$ be given by \eqref{fol_res_hom_eqn} *- homomorphism. If $\overline{b} \in C^*_r\left(\overline{\mathcal{U}}, \overline{\mathcal F}|_{\overline{\mathcal{U}}} \right)_+$ and $\overline{a} = \overline{\th}\left(\overline{b} \right)$ then $\overline{a} \in  B\left(\overline{\H}\right)_+$ is a special element  of the sequence \eqref{fol_alg_seq_eqn}. 
	\end{lemma}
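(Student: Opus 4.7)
The plan is to verify conditions (a)--(c) of Definition~\ref{special_el_defn} for $\overline a = \overline{\th}(\overline b)$. The key geometric input is that, since $\overline{\mathcal U}$ is homeomorphically mapped onto $\mathcal U$, the translates $\{g\overline{\mathcal U}\}_{g \in \widehat G}$ are pairwise equal or disjoint, and for each $n$ the images under $\overline{\pi}_n$ of the distinct $\widehat G_n$-cosets of these translates (where $\widehat G_n := \ker(\widehat G \to G_n)$) yield disjoint sheets in $M_n$. Using that $\overline{\pi}_n|_{\overline{\mathcal U}}$ is injective (it is one factor of the homeomorphism $\overline{\pi}|_{\overline{\mathcal U}}$), this lets us descend $\overline a$ to candidate elements of $A_n$.

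For condition~(a), transfer $\overline b$ via the homeomorphism $\overline{\pi}_n|_{\overline{\mathcal U}}$ to an element of $C^*_r(\overline{\pi}_n(\overline{\mathcal U}),\mathcal F_n|_{\overline{\pi}_n(\overline{\mathcal U})})$, and embed it into $A_n$ via \eqref{fol_res_hom_eqn}; call the resulting positive element $a_n$. Using the natural identification $L^2(\mathcal G_{\overline x}) \cong L^2(\mathcal G_{\overline{\pi}_n(\overline x)})$ that defines $\pi_{\mathrm{geom}}$, one checks that $a_n$ acts on $\overline \H = \bigoplus_{\overline x} L^2(\mathcal G_{\overline x})$ precisely as the sum $\sum_{g \in \widehat G_n} g \overline a$. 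The uniform bound $\|a_n\| \le \|\overline b\|$, together with positivity of the partial sums and the monotone convergence argument of Lemma~\ref{stong_conv_inf_lem}, gives strong convergence; hence $a_n \in A_n$.

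For condition~(b), observe that $z \overline a z^*$ with $z \in A = C^*_r(M,\mathcal F)$ is a positive element of $C^*_r(\overline M, \overline{\mathcal F})$ whose groupoid support is contained in the convolution product $(\supp z)\cdot \mathcal G(\overline{\mathcal U},\overline{\mathcal F}|_{\overline{\mathcal U}})\cdot (\supp z^*)$. The descent argument from step~(a) then produces strongly convergent sums $b_n, c_n \in A_n$; the case of $d_n$ follows by applying the norm-continuous functional calculus $f_\eps$ to $z\overline a z^*$ and to the partial sums.

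The main obstacle is condition~(c). The goal is to show that for $n$ large the distinct translates $g(z\overline a z^*)$ with $g \in \widehat G_n$ are supported on groupoid pieces whose convolution products vanish, so that the cross terms in $b_n^2 = \bigl(\sum_g g(z\overline a z^*)\bigr)^2$ vanish and $b_n^2 = c_n$. The plan is to approximate $\overline b$ in norm by a smooth compactly supported half-density $\overline b'$ on $\mathcal G(\overline{\mathcal U},\overline{\mathcal F}|_{\overline{\mathcal U}})$; then $z\overline{\th}(\overline b') z^*$ has compact groupoid support $K$, and by compactness of $K$ together with the inverse-limit description of $\overline M$ from~\ref{top_inf_constr}, distinct $\widehat G_n$-translates of $K$ become disjoint in $\mathcal G(\overline M, \overline{\mathcal F})$ for $n$ large. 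This gives $b_n^2 = c_n$ exactly on the approximation, hence $\|b_n^2 - c_n\| < \eps$ upon passing the approximation error through. The genuinely delicate point is transferring disjointness from the base manifold $\overline M$ to the holonomy groupoid $\mathcal G(\overline M, \overline{\mathcal F})$, which requires that the groupoid be compatible with the tower of coverings and that the $\widehat G$-action be free (in a compact-open sense) on relevant groupoid pieces.
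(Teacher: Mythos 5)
Your treatment of conditions (a) and (b) coincides with the paper's. The paper sets $\mathcal U_n = \overline{\pi}_n\left(\overline{\mathcal U}\right)$, takes the isomorphism $\varphi_n\colon C^*_r\left(\overline{\mathcal{U}}, \overline{\mathcal F}|_{\overline{\mathcal{U}}} \right) \xrightarrow{\approx} C^*_r\left({\mathcal{U}_n}, {\mathcal F_n}|_{{\mathcal{U}_n}} \right)$ together with the restriction homomorphism $\th_n$ of \eqref{fol_res_hom_eqn}, and rests everything on the single identity $\sum_{g\in\ker(\widehat G\to G_n)} g\,\overline{\th}\left(\overline b\right) = \th_n\circ\varphi_n\left(\overline b\right)$ --- exactly your ``descend through the homeomorphism and embed via \eqref{fol_res_hom_eqn}'' step; then $b_n = z\left(\th_n\circ\varphi_n\left(\overline b\right)\right)z^*$ and $d_n=f_\eps\left(b_n\right)$ land in $A_n$ as in your step for (b). Where you genuinely diverge is condition (c). The paper disposes of it in one line by asserting the \emph{exact} identity $c_n=\left(z\left(\th_n\circ\varphi_n\left(\overline b\right)\right)z^*\right)^2=b_n^2$ for every $n$, so that $\left\|b_n^2-c_n\right\|=0$; you instead treat (c) as the main difficulty and prove only the asymptotic bound, approximating $\overline b$ (you should also approximate $z$, which is an arbitrary element of $C^*_r\left(M,\mathcal F\right)$ and need not have compact support) by compactly supported elements and using that distinct $\ker(\widehat G\to G_n)$-translates of a compact subset of $\overline M$ become disjoint for large $n$, so that the cross terms $\left(g\left(z\overline a z^*\right)\right)\left(g'\left(z\overline a z^*\right)\right)$, $g\neq g'$, eventually vanish. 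It is worth noting that the paper's exact identity is precisely the assertion that these cross terms vanish for \emph{all} $n$: since $z$ is $\widehat G$-invariant, $b_n^2-c_n=\sum_{g\neq g'} z\left(g\overline a\right)z^*z\left(g'\overline a\right)z^*$, and the middle factor $z^*z$ can a priori connect the disjoint sheets $g\overline{\mathcal U}$ and $g'\overline{\mathcal U}$ along a leaf, so the paper silently assumes what you set out to prove. Your route costs more --- the approximation step and the transfer of eventual disjointness from $\overline M$ to the groupoid, which must be run through sources and ranges (one needs $s\left(gK\right)\cap r\left(g'K\right)=\emptyset$, not merely $gK\cap g'K=\emptyset$), the delicate point you correctly flag --- but it only uses the asymptotic inequality that Definition \ref{special_el_defn}(c) actually requires, and it supplies a justification at the one place where the paper's argument is a bare assertion.
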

\begin{proof}
	Denote by ${\mathcal{U}_n} = \overline{\pi}_n\left(\overline{\mathcal{U}} \right)$. For any $n \in \N^0$ there is the natural isomorphism
	$$
	\varphi_n : C^*_r\left(\overline{\mathcal{U}}, \overline{\mathcal F}|_{\overline{\mathcal{U}}} \right) \xrightarrow{\approx} C^*_r\left({\mathcal{U}_n}, {\mathcal F_n}|_{{\mathcal{U}_n}} \right)
	$$
	such that 
\begin{equation}
\sum_{g \in \ker\left(\widehat G \to G_n \right)} g \overline \th \left( \overline b\right)  = \th_n \circ	\varphi_n\left( \overline b \right) 
\end{equation}
where $\th_n: C^*_r\left({\mathcal{U}_n}, {\mathcal F_n}|_{{\mathcal{U}_n}} \right) \hookto C^*_r\left(M_n, \mathcal F_n \right) $ is given by \eqref{fol_res_hom_eqn}. It follows that
$$
a_n = \sum_{g \in \ker\left(\widehat G \to G_n \right)} g \overline a = \sum_{g \in \ker\left(\widehat G \to G_n \right)} g \overline \th \left( \overline b\right) =\th_n \circ	\varphi_n\left( \overline b \right) \in  C^*_r\left({M_n}, {\mathcal F_n} \right),
$$
i.e. $\overline{   a}$ satisfies to the condition (a) of the Definition \ref{special_el_defn}. 	If  $f_\eps: \R \to \R$ is given by 
\eqref{f_eps_eqn} then for any $n \in \mathbb{N}^0$ and for any $z \in C^*_r\left(M, \mathcal F \right)$ following conditions hold
\begin{equation*}
\begin{split}
b_n = \sum_{g \in \ker\left( \widehat{G} \to  G_n\right)} g \left(z  \overline{a} z^*\right) = z\left( \th_n\circ \varphi_n\left(\overline b \right) \right)z^* \in C^*_r\left(M, \mathcal F \right), \\
c_n = \sum_{g \in \ker\left( \widehat{G} \to  G_n\right)} g \left(z  \overline{a} z^*\right)^2=\left( z\left( \th_n\circ \varphi_n\left(\overline b \right) \right)z^*\right)^2 = b_n^2 \in C^*_r\left(M, \mathcal F \right),\\
d_n = \sum_{g \in \ker\left( \widehat{G} \to  G_n\right)} g f_\eps\left( z  \overline{a} z^* \right) =f_\eps\left( z\left( \th_n\circ \varphi_n\left(\overline b \right) \right)z^*\right)  \in C^*_r\left(M, \mathcal F \right),
\end{split}
\end{equation*}
i.e. the condition (b) of the Definition \ref{special_el_defn} holds.
From $c_n = b_n^2$ it turns out the condition (c) of the Definition \ref{special_el_defn} holds.
\end{proof}
Denote by $\overline{C^*_r\left(M, \mathcal F \right)}$ the disconnected inverse limit of the sequence 
$\mathfrak{S}_{C^*_r\left(M, \mathcal F \right)}\in \mathfrak{FinAlg}$  given by \eqref{fol_alg_seq_eqn}, with respect to the geometric representation $\pi_{\mathrm{geom}}$ (cf. \eqref{fol_equ_rep_eqn}).
\begin{cor}\label{fol_in_cor}
Following condition holds
$$
C^*_r\left(\overline{M}, \overline{\mathcal F} \right)\subset\overline{C^*_r\left(M, \mathcal F \right)}.
$$
\end{cor}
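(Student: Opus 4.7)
The plan is to show that a dense $*$-subalgebra of $C^*_r\bigl(\overline{M}, \overline{\mathcal F}\bigr)$ lies inside the $C^*$-subalgebra of $B(\overline{\H})$ generated by weakly special elements, and then conclude by $C^*$-norm closure. Since any element of a $C^*$-algebra is a linear combination of at most four positive elements, it suffices to handle positive elements of $C^*_r\bigl(\overline{M}, \overline{\mathcal F}\bigr)$, and since $C_c^\infty(\overline{\mathcal G},\Omega^{1/2})$ is norm-dense in $C^*_r\bigl(\overline{M}, \overline{\mathcal F}\bigr)$, it is enough to approximate positive elements of this smooth core.

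First I would pick an element $\overline{a} \in C^*_r\bigl(\overline{M}, \overline{\mathcal F}\bigr)_+$ with underlying half-density $f \in C_c^\infty(\overline{\mathcal G},\Omega^{1/2})$. Its support in $\overline{\mathcal G}$ is compact, so the images under the range and source maps in $\overline{M}$ lie in a common compact set $K \subset \overline{M}$. By the description of $\overline{M}$ as $\bigsqcup_{g\in J} g(\varprojlim\downarrow \mathfrak{S}_M)$ and the fact that the projection $\overline{\pi}_n : \overline{M}\to M_n$ is a covering for each $n$, I can cover $K$ by finitely many connected open sets $\overline{\mathcal U}_1,\dots,\overline{\mathcal U}_r \subset \overline{M}$, each homeomorphically mapped by $\overline{\pi}$ onto $\mathcal U_i = \overline{\pi}(\overline{\mathcal U}_i)\subset M$.

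Next I would choose a smooth partition of unity $\{\chi_i\}$ on a neighbourhood of $K$ subordinate to $\{\overline{\mathcal U}_i\}$, with $\chi_i = \psi_i^2$ for smooth $\psi_i$, and approximate $\overline{a}$ in norm by sums of the form $\sum_{i,j} \psi_i\, \overline{a}\, \psi_j$, where the $\psi_i$ are regarded as multipliers acting on $C^*_r\bigl(\overline{M}, \overline{\mathcal F}\bigr)$ via pullback along $r$ and $s$. Each product $\psi_i\,\overline{a}\,\psi_i$ is positive and has support contained in $\overline{\mathcal U}_i$, so it is the image under $\overline{\th}_i : C^*_r\bigl(\overline{\mathcal U}_i, \overline{\mathcal F}|_{\overline{\mathcal U}_i}\bigr)\hookto C^*_r\bigl(\overline{M},\overline{\mathcal F}\bigr)$ of a positive element; by Lemma \ref{fol_com_inc_lem} this image is special with respect to $\mathfrak{S}_{C^*_r(M,\mathcal F)}$. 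The cross terms $\psi_i\,\overline{a}\,\psi_j$ with $i \neq j$ can be recovered from the diagonal positive terms by the polarisation identity
\begin{equation*}
4\,\psi_i\overline{a}\psi_j = (\psi_i+\psi_j)\overline{a}(\psi_i+\psi_j) - (\psi_i-\psi_j)\overline{a}(\psi_i-\psi_j) + \cdots,
\end{equation*}
so they are linear combinations of special elements. Hence every $\psi_i\,\overline{a}\,\psi_j$ lies in the algebra generated by weakly special elements, and passing to the norm limit gives $\overline{a} \in \overline{C^*_r(M,\mathcal F)}$.

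The main obstacle will be step three: arranging the partition of unity manipulations so that each localised piece honestly sits inside one of the restriction subalgebras $C^*_r\bigl(\overline{\mathcal U}_i, \overline{\mathcal F}|_{\overline{\mathcal U}_i}\bigr)$ to which Lemma \ref{fol_com_inc_lem} applies. Care is required because the half-density bundle and the holonomy groupoid of a restriction must be matched with those of the ambient foliation; however the existence of the injective $*$-homomorphism \eqref{fol_res_hom_eqn} from the restricted foliation algebra provides exactly this compatibility, and the elements $\psi_i\,\overline{a}\,\psi_i$ with support in $\overline{\mathcal U}_i$ are images under $\overline{\th}_i$ by construction. Once this localisation is in place, the rest of the argument is a routine density and $C^*$-norm completion argument.
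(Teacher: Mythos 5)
Your overall strategy --- exhibit a dense subalgebra of $C^*_r\left(\overline{M},\overline{\mathcal F}\right)$ inside the algebra generated by the special elements supplied by Lemma \ref{fol_com_inc_lem}, then close in $C^*$-norm --- is the same as the paper's, and that lemma is indeed the right engine. The gap is in the localisation step, precisely the one you flag as the main obstacle. Multiplying $\overline{a}$ on both sides by functions $\psi_i$ supported in $\overline{\mathcal U}_i$ (acting as multipliers through the range and source maps) only constrains the \emph{endpoints} of the paths in the support: $\psi_i\,\overline{a}\,\psi_i$ is supported in $\overline{\mathcal G}^{\overline{\mathcal U}_i}_{\overline{\mathcal U}_i}=\left\{\gamma\ :\ r(\gamma)\in\overline{\mathcal U}_i,\ s(\gamma)\in\overline{\mathcal U}_i\right\}$, whereas the holonomy groupoid of the restriction $\left(\overline{\mathcal U}_i,\overline{\mathcal F}|_{\overline{\mathcal U}_i}\right)$ consists only of classes of leafwise paths lying \emph{entirely} in $\overline{\mathcal U}_i$. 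A leafwise path can start and end in $\overline{\mathcal U}_i$ while leaving it in between, so $\psi_i\,\overline{a}\,\psi_i$ is in general not in the image of $\overline{\th}_i$, and Lemma \ref{fol_com_inc_lem} does not apply to it. The same difficulty hits the cross terms even after polarisation; moreover the polarised terms are cut-downs by functions supported in $\overline{\mathcal U}_i\cup\overline{\mathcal U}_j$, which is not a connected, evenly covered set of the kind the lemma requires.

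The paper's own proof is a single sentence, but it implicitly avoids this trap by taking the algebra \emph{generated} by the images $\overline{\th}\left(C^*_r\left(\overline{\mathcal U},\overline{\mathcal F}|_{\overline{\mathcal U}}\right)_+\right)$, i.e.\ it allows convolution products. A long leafwise path is a finite concatenation of short paths each contained in a single foliation chart, and correspondingly a compactly supported smooth kernel on $\overline{\mathcal G}$ is approximated by sums of products $f_1 * \cdots * f_m$ with each factor supported over a single chart; density of the generated algebra in $C^*_r\left(\overline{M},\overline{\mathcal F}\right)$ follows, and the disconnected limit $\overline{C^*_r\left(M,\mathcal F\right)}$ is by construction a norm-closed algebra containing all the generators. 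To repair your argument, replace ``linear combinations of the cut-downs $\psi_i\,\overline{a}\,\psi_j$'' by ``the $*$-algebra generated by the restriction subalgebras'', or alternatively decompose the support of $\overline{a}$ according to the connected components of the sets $\overline{\mathcal G}^{\overline{\mathcal U}_i}_{\overline{\mathcal U}_j}$, as the paper does in \ref{fol_constr} for the reverse inclusion of Corollary \ref{fol_out_cor}.
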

\begin{proof}
	The $C^*$-norm completion of the algebra generated by special elements given by the Lemma \ref{fol_com_inc_lem} coincides with $C^*_r\left(\overline{M}, \overline{\mathcal F} \right)$.
\end{proof}
\begin{empt}
	Denote by $p = \dim \mathcal{F}$, $q = {\rm codim} \, \mathcal{F}$.
 Let us consider a foliation chart $\phi:\R^{\dim M} \to M$ such that leafs of the foliation $\left(\R^{\dim M}, \mathcal{F}_p \right)$  given by \eqref{fol_chart_eqn} are mapped into leafs of $\left( M, \mathcal F\right) $. The set 
 \begin{equation}\label{fol_trans_eqn}
\mathcal V = \phi\left(\left\{0\right\}\times \R^q \right)
 \end{equation}
 is a submanifold of $M$ which is traversal  to $\mathcal{F}$.  If $b \in C_0\left(\R^p \times \R^q \right)_+ = C_0\left(\R^{\dim M}\right)_+$ is a positive function such that $b\left(\R^{\dim M} \right)\subset\left[ 0,1\right]$ then $b$  can be represented by a following way
 \begin{equation}\label{fol_r_fact_eqn}
b = \left(  1_{C_b\left(\R^p\right)  }\otimes b'\right) b''
 \end{equation}
 where $b' \in C_0\left(\R^q \right)_+$ and  $b'' \in C_0\left(\R^{\dim M} \right)$ and following conditions hold:
 \begin{equation*}
 \begin{split}
  b'\left(\R^q \right)\subset\left[ 0,1\right] , ~~ \\ b''\left(\R^{\dim M} \right)\subset\left[ 0,1\right].
 \end{split}
 \end{equation*}
  Clearly $b', b''$ correspond to contractible operators.
  If $\mathcal U = \phi\left(\R^{\dim M} \right)$ then any $a \in C_0 \left(\mathcal U \right)$ is an image of  $b \in C_0\left(\R^p \times \R^q \right)$ and the factorization \eqref{fol_r_fact_eqn} corresponds to the factorization 
  \begin{equation}\label{fol_u_fact_eqn}
 a = a' a''
 \end{equation}
 where $a'$ (resp. $a''$) corresponds to $1_{C_b\left(\R^p \right)}\otimes b'$ (resp. $b''$). The element $a'$ can be regarded as an element of $C_0\left(\mathcal V \right)$. The manifold $M$ is compact, so there is a finite set $$\left\{\phi_\iota: \R^{\dim M} \to \mathcal U_\iota \subset M\right\}_{\iota \in I}$$ of foliation charts such that $M = \bigcup_{\iota \in I}\mathcal U_\iota$. There is a partition of unity
\begin{equation}\label{fol_dec_eqn}
 1_{C\left( M\right) } = \sum_{\iota \in I} a_\iota = \sum_{\iota \in I} a'_\iota a''_\iota
\end{equation}
 where $a_\iota \in C\left( M\right)_+ $, $\supp a_\iota \subset \mathcal U_\iota$ and $a_\iota = a'_\iota a''_\iota$ is a factorization given by \eqref{fol_u_fact_eqn}.  
   From the Example \ref{fol_trivial_k_exm} it follows that $C^*_r\left( \R^{\dim M}, \mathcal{F}_p\right) \cong C_0\left(\R^p \right) \otimes \mathcal K $. Similarly one has $C^*_r\left( \mathcal U_\iota, \mathcal{F}_{\mathcal U_\iota}\right) \cong C_0\left(\mathcal V_\iota \right) \otimes \mathcal K$ where $\mathcal V_\iota$ is a transversal manifold given by \eqref{fol_trans_eqn}. The formula $C^*_r\left( \mathcal U_\iota, \mathcal{F}_{\mathcal U_\iota}\right) \cong C_0\left(\mathcal V_\iota \right) \otimes \mathcal K$ implies that there is a separable Hilbert space $\H$ with an orthogonal basis $\xi_1, \dots , \xi_j, \dots$ such that elements of $C^*_r\left( \mathcal U_\iota, \mathcal{F}_{\mathcal U_\iota}\right)$ are operators
   $$
   C_b\left(\mathcal V_\iota \right) \otimes \H \to  C_0\left(\mathcal V_\iota \right) \otimes \H.
   $$
   Above operators can be regarded as compact operators in $\mathcal K\left(\ell^2\left(C_0\left(\mathcal V_\iota \right) \right)  \right)$. 
   If   $p_j \in \mathcal K = \mathcal K\left(\H \right)$ is a projector along $\xi_j$ then from
   $$
   1_{M\left(\mathcal K \right) } = \sum_{j = 1}^\infty p_j
   $$
   it follows that
   $$
   1_{M\left( C_0\left(\mathcal V_\iota \right) \otimes \mathcal K\right) } = 1_{ C_b\left(\mathcal V_\iota \right)} \otimes \sum_{j = 1}^\infty p_j
   $$
   where the sum of the above series implies the strict convergence \cite{blackadar:ko}.
   From above formulas it follows that the series
   \begin{equation}\label{fol_unity_decomp_eqn}
   1_{C\left( M\right) } = 1_{M\left(C^*_r\left(M, \mathcal F \right) \right) } = \sum_{\iota \in I}  a'_\iota a''_\iota \otimes \sum_{j = 1}^\infty p_j
   \end{equation}
   is strictly convergent.
\end{empt}
\begin{remark}
	Clearly $a'_\iota a''_\iota \otimes  p_j,  a''_\iota \otimes  p_j \in C^*_r\left(M, \mathcal F \right)$.
\end{remark}
\begin{empt}
	Let us consider the geometric representation 
\begin{equation*}
	\pi_{\mathrm{geom}}:\widehat{C^*_r\left(M, \mathcal F \right)}=\varinjlim C^*_r\left(M_n, \mathcal F_n \right)\to B\left(\overline{\H}\right)
\end{equation*}
given by \eqref{fol_equ_rep_eqn}. Suppose that $\overline{a} \in B\left(\overline{\H}\right)_+$ is a special element of the sequence \eqref{fol_alg_seq_eqn}. We would like to proof that
\begin{equation}\label{fol_out_eqn}
\overline{a} \in C^*_r\left(\overline{M}, \overline{\mathcal F} \right).
\end{equation}
The equation \eqref{fol_out_eqn} follows from three facts:
\begin{enumerate}
	\item There is a decomposition 
\begin{equation}\label{fol_spec_decomp_eqn}
	\overline{a} = \sum_{ \iota'\in I} \sum_{ \iota''\in I}  \sum_{j = 1}^\infty  \sum_{k = 1}^\infty \left( a'_{\iota'} a''_{\iota'} \otimes p_j\right) \overline{a} \left( a'_{\iota''} a''_{\iota''} \otimes p_k\right),
\end{equation}
	\item One has $ \left( a'_{\iota'} a''_{\iota'} \otimes p_j\right) \overline{a} \left( a'_{\iota''} a''_{\iota''} \otimes p_k\right) \in C^*_r\left(\overline{M}, \overline{\mathcal F} \right)$ for any $\iota', \iota'' \in I$, $j,k \in \N$, 
	\item The series \eqref{fol_spec_decomp_eqn} is norm convergent.
\end{enumerate}
The decomposition \eqref{fol_spec_decomp_eqn} follows from \eqref{fol_unity_decomp_eqn}. Facts 2 and 3 are proven below.
\end{empt}
\begin{empt}\label{fol_constr}
Let us fix $\iota', \iota'' \in I$. The subset $$
\mathcal G_{\iota'\iota''} = \left\{\gamma \in \mathcal G\left(M, \mathcal F \right) ~|~ s\left(\gamma \right)\in \mathcal U_{\iota'} , r\left(\gamma \right)\in \mathcal U_{\iota''}\right\}
$$
can be decomposed into connected components, i.e.
\begin{equation}\label{fol_decomp_eqn}
\mathcal G_{\iota'\iota''} = \bigsqcup_{\lambda \in \Lambda_{\iota'\iota''}} \mathcal G_\lambda.
\end{equation}

 If $\mathcal W'_\lambda \subset  \mathcal V_{\iota'}$ and $\mathcal W''_\lambda \subset  \mathcal V_{\iota''}$ are given by
\begin{equation*}
\begin{split}
\mathcal W'_\lambda  = \left\{x \in \mathcal V_{\iota'}~|~ \exists \gamma \in \mathcal G_\lambda;~ x \in s\left(\gamma \right) \right\},\\
\mathcal W''_\lambda  = \left\{x \in \mathcal V_{\iota''}~|~ \exists \gamma \in \mathcal G_\lambda;~ x \in r\left(\gamma \right) \right\}
\end{split}
\end{equation*}
then $\mathcal G_\lambda$  corresponds to a diffeomorphism $\varphi_\lambda: \mathcal W'_\lambda \xrightarrow{\approx} \mathcal W''_\lambda$. Below we drop the index $\lambda$ and $\mathcal W'$, $\mathcal W''$ instead of $\mathcal W'_\lambda$, $\mathcal W'_\lambda$.  For any subset $\mathcal U$ subset  of a topological set $\mathcal X$ denote by $\partial \mathcal U = \overline{\mathcal U} \backslash \mathcal U \subset \mathcal X$ where $\overline{\mathcal U}$ is a closure of $\mathcal U$. Any $a_{\mathcal W'} \in C_0\left( \mathcal W'\right)$ can be regarded as element of $C_b\left( \mathcal W'\right)$ such that $a_{\mathcal W'}\left(\partial \mathcal W' \right)=\{0\}$. Clearly $\partial \mathcal W' \subset \partial \mathcal V' \bigcup \partial\varphi^{-1}_\lambda\left(\mathcal V'' \right)$. If $b' \in  C_0\left( \mathcal W'\right)$ and $b'' \in  C_0\left( \mathcal W''\right)$ then the isomorphism $\varphi_\lambda$ gives a product $b'b'' \in C_b\left(\mathcal W' \right)$ such that $\left(b'b'' \right)\left(\partial \mathcal W' \right)=\{0\}$.  It follows that $b'b'' \in C_0\left(\mathcal W' \right)$.
 Let us define an element $y \in C^*_r\left(M, \mathcal F \right)$ such that $\supp y \in \mathcal G_{\lambda}$ and $y$ is given by
\begin{equation}\label{fol_yljk_eqn}
\begin{split}
y: C_b\left( \mathcal W'\right) \otimes \H \to C_0\left( \mathcal W''\right)\otimes \H,\\
1_{C_b\left( \mathcal W'\right) } \otimes \xi_l \mapsto \delta_{jl}  \sqrt[3]{a'_{\iota'}}~ \sqrt[2]{a'_{\iota''}} \xi_k; ~(\delta_{jj}=1, ~ \delta_{jk}=0 \text{ if } j\neq k),
\end{split}
\end{equation}
and note that $\sqrt[3]{a'_{\iota'}}~ \sqrt[2]{a'_{\iota''}} \in  C_0\left(\mathcal W'' \right)$. Moreover $y$ can be regarded as a compact operator in $\mathcal K\left(\ell^2\left( C_0\left(\mathcal W' \right)\right),\ell^2\left( C_0\left(\mathcal W'' \right)\right)  \right)$.   
Denote by 
\begin{equation}\label{foll_e_eqn}
e_\iota = \sqrt[3]{a'_{\iota'}}\otimes p_j\in C^*_r\left( {\mathcal U_{\iota'}}, \mathcal F_{\mathcal U_{\iota'}}\right)\subset C^*_r\left( M, \mathcal F\right)
\end{equation}
 where  $p_j \in \mathcal K = \mathcal K\left(\H \right)$ is a projector along $\xi_j$, and let 
\begin{equation}\label{fol_z_eqn}
z = y^* + e
\end{equation}
Let $\overline{a} \in B\left(\overline{\H}\right)_+$ be a special element of the sequence \eqref{fol_alg_seq_eqn}. From the (a) of the Definition \ref{special_el_defn} it turns out
$$
a = \sum_{g \in G}g \overline{a} \in  C^*_r\left(M, \mathcal F \right)
$$
If $\mathcal G_{\iota'} \subset \mathcal G\left( M, \mathcal F\right)$ is a set of path which a homotopic to a trivial path $\gamma\left( \left[0,1\right] \right) \in\mathcal U_{\iota'}$ and $s\left(\gamma \right), r\left(\gamma \right) \in \mathcal U_{\iota'}$ then the restriction of $zaz^*$ on  $\mathcal G_{\iota'}$ is a "rank-one" positive operator, given by
\begin{equation}\label{fol_rank_one_eqn}
zaz^*|_{\mathcal G_{\iota'}}= \theta_{\iota' \iota'' \lambda jk}\otimes p_j,
\end{equation}

where  $\theta_{\iota' \iota'' \lambda jk}  \in C_0\left(\mathcal W_{\iota'} \right)_+$. If $f_\eps: \R \to \R$ is given by 
\eqref{f_eps_eqn}
then from (b) of the Definition \ref{special_el_defn} it turns out 
that for any $n \in \mathbb{N}^0$ following conditions hold 
\begin{equation*}
\begin{split}
b_n = \sum_{g \in \ker\left( \widehat{G} \to  G\left( A_n~|~A \right)\right)} g \left(z  \overline{a} z^*\right)  \in C^*_r\left(M_n, \mathcal F_n \right) ,\\
c_n = \sum_{g \in \ker\left( \widehat{G} \to  G\left( A_n~|~A \right)\right)} g \left(z  \overline{a} z^*\right)^2 \in C^*_r\left(M_n, \mathcal F_n \right),\\
d_n = \sum_{g \in \ker\left( \widehat{G} \to  G\left( A_n~|~A \right)\right)} g f_\eps\left( z  \overline{a} z^* \right) \in C^*_r\left(M_n, \mathcal F_n \right).
\end{split}
\end{equation*}
From the condition (c) of the Definition \ref{special_el_defn} it follows that for any $\eps > 0$ there is $N \in \N$ such that 
$$
\left\|b_n^2 - c_n \right\|< \eps
$$ 
for any $n \ge N$. If  $\mathcal G_{\iota'n} \in \mathcal G\left( M_n, \mathcal F_n\right)$ is given by $\mathcal G_{\iota'n} = \pi^{-1}_n\left(\mathcal G_{\iota'} \right)$ then similarly to \eqref{fol_rank_one_eqn} one has
\begin{equation*}
\begin{split}
b_n|_{\mathcal G_{\iota'n}} = b'_n \otimes p_j,\\
c_n|_{\mathcal G_{\iota'n}} = c'_n \otimes p_j,\\
d_n|_{\mathcal G_{\iota'n}} = d'_n \otimes p_j,\\
\end{split}
\end{equation*}
where $b'_n, c'_n, d'_n \in C_0\left(\pi^{-1}_n\left( \mathcal W_{\iota'}\right)  \right)_+$.
 If $\overline{b}' \in C_0\left(\overline{\pi}^{1} \left(\mathcal W_{\iota'} \right)  \right)''$ is a strong limit
 \begin{equation}\label{fol_strong_lim_eqn}
 \overline{b}' = \lim_{n \to \infty} b'_n
 \end{equation} 
  then following condition holds:
 \begin{equation*}
 \begin{split}
 b'_n = \sum_{g \in \ker\left(\widehat{G}\to G_n \right)} g \overline{b}',\\
 c'_n = \sum_{g \in \ker\left(\widehat{G}\to G_n \right)} g \overline{b}'^2,\\
d'_n = \sum_{g \in \ker\left(\widehat{G}\to G_n \right)} g f_\eps\left( \overline{b}'\right).
 \end{split}
 \end{equation*}
 From $\left\|b_n^2 - c_n \right\|< \eps$ it follows that $\left\|b'^2_n - c'_n \right\|< \eps$.
  Now we need a following lemma.
\end{empt}
\begin{lemma}\label{comm_main_lem}\cite{ivankov:qnc}
	Suppose that $\mathcal X$ is a  locally compact Hausdorff space. Let $\overline{a} \in C_0\left(\overline{   \mathcal X }\ \right)''_+$ be such that following conditions hold:
	\begin{enumerate}
		\item[(a)]  If $f_\eps$ is given by \eqref{f_eps_eqn} then following series
		\begin{equation*}
		\begin{split}
		a_n = \sum_{g \in \ker\left( \widehat{G} \to  G_n\right)} g \overline a,\\
		b_n = \sum_{g \in \ker\left( \widehat{G} \to  G_n\right)} g \overline a^2,\\
		c_n = \sum_{g \in \ker\left( \widehat{G} \to  G_n\right)} g f_\eps\left( \overline a\right) ,\\
		\end{split}
		\end{equation*}
		are strongly convergent and  $a_n, b_n, c_n \in C_0\left(\mathcal X_n \right)$,
		\item[(b)] For any $\eps$ there is $N \in \N$ such that 
		\begin{equation*}
		\begin{split}
		\left\|a^2_n-b_n\right\| < \eps; ~\forall n \ge N.
		\end{split}
		\end{equation*}
	\end{enumerate}
	Then $\overline{a} \in C_0\left(\overline{   \mathcal X }\ \right)_+$.
\end{lemma}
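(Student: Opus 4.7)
The plan is to reduce the claim to showing that the truncated element $f_\eps(\overline{a})$ lies in $C_0(\overline{\mathcal{X}})$ for every $\eps>0$: once this is established, the elementary inequality $|f_\eps(t)-t|\le \eps$ valid on $[0,\infty)$ gives $\|f_\eps(\overline{a})-\overline{a}\|\le\eps$, and the norm-closedness of $C_0(\overline{\mathcal{X}})$ inside $C_0(\overline{\mathcal{X}})''$ then forces $\overline{a}\in C_0(\overline{\mathcal{X}})_+$. Thus the whole argument concentrates on the element $\overline{a}_\eps\eqdef f_\eps(\overline{a})$, whose support is contained in the open set $\{\overline{a}>\eps\}$.

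Denoting by $\overline{\pi}_n\colon\overline{\mathcal{X}}\to\mathcal{X}_n$ the natural projections coming from the tower, hypothesis (a) supplies continuous pullbacks $\widetilde{a}_n\eqdef a_n\circ\overline{\pi}_n$, $\widetilde{b}_n\eqdef b_n\circ\overline{\pi}_n$ and $\widetilde{c}_n\eqdef c_n\circ\overline{\pi}_n$ on $\overline{\mathcal{X}}$. Applying Lemma~\ref{stong_conv_inf_lem} to each of $\overline{a}$, $\overline{a}^2$ and $f_\eps(\overline{a})$ yields $\overline{a}=\inf_n \widetilde{a}_n$, $\overline{a}^2=\inf_n \widetilde{b}_n$ and $f_\eps(\overline{a})=\inf_n \widetilde{c}_n$ pointwise, so in particular $\overline{a}$ is upper semicontinuous and $\{\overline{a}>\eps\}$ is open. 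Expanding $a_n^2$ and using that the $\widehat{G}$-action is by $*$-automorphisms, condition (b) translates pointwise into
\begin{equation*}
\sum_{g\neq g'\in\ker(\widehat{G}\to G_n)}\overline{a}(g\overline{x})\,\overline{a}(g'\overline{x})<\eps\quad\text{for all }\overline{x}\in\overline{\mathcal{X}},\ n\ge N.
\end{equation*}
For any threshold $\delta>0$ and any choice of $\eps<\delta^2$ this forbids two distinct points of the same $\overline{\pi}_n$-fiber from simultaneously satisfying $\overline{a}>\delta$; this ``fiberwise disjointness'' is the key output of the hypotheses.

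With fiberwise disjointness in hand, I would show $\overline{a}_\eps\in C_0(\overline{\mathcal{X}})$ by proving $\widetilde{c}_n\to f_\eps(\overline{a})$ uniformly on $\overline{\mathcal{X}}$. On the set $\{\overline{a}>\eps\}$ the disjointness guarantees that for $n$ large the fiber $\overline{\pi}_n^{-1}(\overline{\pi}_n(\overline{x}))$ contributes a single nonzero summand to $\widetilde{c}_n(\overline{x})$, namely $f_\eps(\overline{a})(\overline{x})$ itself; off that set, the same quadratic cross-term bound from (b) squeezes the remaining sub-threshold contributions. Since each $\widetilde{c}_n$ is continuous and vanishes at infinity on $\overline{\mathcal{X}}$, its uniform limit $f_\eps(\overline{a})$ belongs to $C_0(\overline{\mathcal{X}})$, and the reduction of the first paragraph finishes the proof.

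The main obstacle will be turning the pointwise disjointness into a genuine uniform estimate $\|\widetilde{c}_n - f_\eps(\overline{a})\|\to 0$: off the support of $f_\eps(\overline{a})$ one still has to control the aggregate size of the contributions $g f_\eps(\overline{a})$ coming from orbit points where $\overline{a}$ hovers near but below the threshold $\eps$, and these can \emph{a priori} accumulate if the orbit is large. This is precisely why hypothesis (a) bundles the $f_\eps$-average together with the quadratic average, and the delicate interplay between them---ensuring the cross terms are simultaneously small enough to suppress both the truncation error and the fiberwise multiplicities---is the technical heart of the argument.
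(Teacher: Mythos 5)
The paper itself gives no proof of this lemma (it is imported from \cite{ivankov:qnc}), so your proposal can only be judged on its own merits. Your opening reduction is sound: $|f_\eps(t)-t|\le\eps$ on $[0,\infty)$ gives $\|f_\eps(\overline a)-\overline a\|\le\eps$, so it suffices to prove $f_\eps(\overline a)\in C_0\left(\overline{\mathcal X}\right)$ for every $\eps>0$; and your extraction of fiberwise disjointness from condition (b) — expanding $a_n^2-b_n$ into nonnegative cross terms and noting that two distinct points of one $\overline\pi_n$-fibre with $\overline a>\delta$ would force $\|a_n^2-b_n\|\ge 2\delta^2$ — is correct and is certainly an essential ingredient. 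But two things then go wrong. The minor one: an infimum of continuous functions is upper semicontinuous, which makes $\{\overline a<t\}$ open and $\{\overline a\ge t\}$ closed; it does \emph{not} make $\{\overline a>\eps\}$ open. That would require lower semicontinuity, which is part of what you are trying to prove.

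The fatal one is the claimed uniform convergence $\|\widetilde c_n-f_\eps(\overline a)\|\to 0$, which is false whenever $\overline a$ exceeds $\eps$ somewhere and the tower does not stabilize. Writing $K_n=\ker\left(\widehat G\to G_n\right)$, one has $\widetilde c_n\left(\overline x\right)-f_\eps(\overline a)\left(\overline x\right)=\sum_{g\in K_n\setminus\{e\}}f_\eps(\overline a)\left(g\overline x\right)$. Pick $\overline y$ with $M=\overline a\left(\overline y\right)>\eps$ and, for each $n$, a nontrivial $g_0\in K_n$, and set $\overline x=g_0^{-1}\overline y$. Fiberwise disjointness forces $\overline a\left(\overline x\right)\le\eps$ for $n\ge N$, hence $f_\eps(\overline a)\left(\overline x\right)=0$, while $\widetilde c_n\left(\overline x\right)\ge f_\eps(\overline a)\left(g_0\overline x\right)=M-\eps>0$; thus $\|\widetilde c_n-f_\eps(\overline a)\|\ge M-\eps$ for all $n\ge N$. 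The convergence $\widetilde c_n\to f_\eps(\overline a)$ is only strong/pointwise (for \emph{fixed} $\overline x$ the offending $g_0$ eventually leaves $K_n$ — that is exactly Lemma \ref{stong_conv_inf_lem}), and pointwise decreasing limits of continuous functions give only upper semicontinuity, not membership in $C_0$. Note also that the obstruction is not the one you flag in your last paragraph (orbit points where $\overline a$ hovers \emph{below} the threshold); it comes from orbit points where $\overline a$ is \emph{large}. A workable argument has to be local rather than uniform: near a point with $\overline a\left(\overline x_0\right)>\eps$ one identifies $f_\eps(\overline a)$ with $\widetilde c_n$ on a small evenly covered neighbourhood whose nontrivial $K_n$-translates miss $\{\overline a>\eps\}$, and one must separately show that $\{f_\eps(\overline a)\ge\delta\}$ is compact (it meets each fibre of $\overline\pi_n$ at most once and lies over the compact set $\{c_n\ge\delta\}$) — compare the handling of $\supp\overline b'_{\eps/2}$ and the finite set $F$ in the proof of Corollary \ref{fol_cor}. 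As written, your argument does not establish the conclusion.
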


\begin{corollary}\label{fol_cor}
If  $\overline{a} \in B\left(\overline{\H}\right)_+$ is a special element of the sequence \eqref{fol_alg_seq_eqn} and $z \in C^*_r\left( M, \mathcal F\right)$ is given by \eqref{fol_z_eqn} then 
$$
\overline{b} = z \overline{a} z^* \in C^*_r\left(\overline M, \overline{\mathcal F}\right).
$$ 
\end{corollary}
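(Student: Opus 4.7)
The plan is to exploit the very specific form of $z$ constructed in \eqref{fol_z_eqn}–\eqref{fol_yljk_eqn}, which was tailored so that $z\overline{a}z^*$ is ``rank one'' on each piece of the local decomposition of the holonomy groupoid. This reduces an a priori noncommutative problem in $B(\overline{\H})$ to a commutative statement on the transversal, to which the existing commutative Lemma \ref{comm_main_lem} applies directly.

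First I would localize using the decomposition $\mathcal G_{\iota'\iota''}=\bigsqcup_{\lambda\in\Lambda_{\iota'\iota''}}\mathcal G_\lambda$ of \eqref{fol_decomp_eqn}: each connected component $\mathcal G_\lambda$ is parameterized by the diffeomorphism $\varphi_\lambda\colon \mathcal W'_\lambda\to \mathcal W''_\lambda$ between pieces of transversals, and by \eqref{fol_rank_one_eqn} the restriction of $z\overline{a}z^*$ to the chart region $\mathcal G_{\iota'}$ takes the rank-one form $\theta_{\iota'\iota''\lambda jk}\otimes p_j$ for some scalar $\theta_{\iota'\iota''\lambda jk}\in C_0(\mathcal W_{\iota'})''_+$. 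It therefore suffices to prove that each such $\theta_{\iota'\iota''\lambda jk}$ actually lies in $C_0(\overline\pi^{-1}(\mathcal W_{\iota'}))_+$, for then the local piece $\theta_{\iota'\iota''\lambda jk}\otimes p_j$ pulls back via the restriction embedding \eqref{fol_res_hom_eqn} into $C^*_r(\overline M,\overline{\mathcal F})$.

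Next I would pull back the three defining sums of Definition \ref{special_el_defn}(b). Because $\overline{a}$ is special and the conjugation by $z$ respects the rank-one decomposition on $\mathcal G_{\iota' n}=\pi_n^{-1}(\mathcal G_{\iota'})$, the sums $b_n,c_n,d_n$ split uniformly as $b'_n\otimes p_j$, $c'_n\otimes p_j$, $d'_n\otimes p_j$ with $b'_n,c'_n,d'_n\in C_0(\pi_n^{-1}(\mathcal W_{\iota'}))_+$, and the norm condition $\|b_n^2-c_n\|<\eps$ from (c) implies $\|{b'_n}^2-c'_n\|<\eps$. Passing to the strong limit \eqref{fol_strong_lim_eqn} produces $\overline{b}'\in C_0(\overline\pi^{-1}(\mathcal W_{\iota'}))''_+$ which verifies exactly the hypotheses (a)–(b) of the commutative Lemma \ref{comm_main_lem}; invoking that lemma yields $\overline{b}'\in C_0(\overline\pi^{-1}(\mathcal W_{\iota'}))_+$, whence $\theta_{\iota'\iota''\lambda jk}\in C_0$ as required.

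Finally I would reassemble: summing the (finitely many) local pieces $\theta_{\iota'\iota''\lambda jk}\otimes p_j$ over $\lambda\in\Lambda_{\iota'\iota''}$ and over the finite index set $I$ recovers $\overline{b}=z\overline{a}z^*$, and each summand lies in the closed algebra $C^*_r(\overline M,\overline{\mathcal F})$ by the previous step. The main obstacle is verifying that the local rank-one reduction \eqref{fol_rank_one_eqn} really gives a scalar-valued function on the covering transversal (so that Lemma \ref{comm_main_lem} applies without residue): one has to check carefully that the terms from $y$ and $e$ in \eqref{fol_z_eqn}, when combined with the general (not necessarily localized) operator $\overline{a}$, only contribute rank-one pieces on the relevant $\mathcal G_{\iota'}$, and that the strong-limit identification $\overline{b}'=\lim_n b'_n$ is compatible with the covering structure on each transversal. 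Once this scalar reduction is in place, everything else follows from Lemma \ref{comm_main_lem} and the closedness of $C^*_r(\overline M,\overline{\mathcal F})$.
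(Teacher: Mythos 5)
Your reduction to the commutative Lemma \ref{comm_main_lem} on the transversal is the same first step the paper takes, but note that this step is really carried out in the preparatory construction \ref{fol_constr} (the rank-one identity \eqref{fol_rank_one_eqn}, the splittings $b_n|_{\mathcal G_{\iota' n}} = b'_n\otimes p_j$, and the strong limit \eqref{fol_strong_lim_eqn}); the proof of the corollary itself takes all of that as given. The genuine gap in your proposal is the step you dismiss as immediate: ``for then the local piece $\theta\otimes p_j$ pulls back via the restriction embedding \eqref{fol_res_hom_eqn} into $C^*_r\left(\overline M,\overline{\mathcal F}\right)$.'' Knowing that the transversal symbol $\overline{b}'$ lies in $C_0\left(\overline{\pi}^{-1}\left(\mathcal W_{\iota'}\right)\right)$ does not by itself identify the concrete operator $\overline{b}=z\overline{a}z^*\in B\left(\overline{\H}\right)$, which is only given as a strong limit, with an element of the reduced $C^*$-algebra of $\left(\overline M,\overline{\mathcal F}\right)$, i.e.\ with a norm limit of kernels in $C^\infty_c\left(\mathcal G\left(\overline M,\overline{\mathcal F}\right),\Omega^{1/2}\right)$. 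This identification is where the paper spends essentially the whole proof: it truncates by $f_{\eps/2}$ to force compact support of $\overline{b}'_{\eps/2}$, uses that compactness to show that only a finite set $F\subset\widehat G$ of deck transformations meets the support, chooses $N$ so that $\overline\pi_N$ is injective on the support, approximates the pushed-down element by a smooth compactly supported $b^\infty_N$, and then lifts $b^\infty_N$ path-by-path (the ``special lifts'') to an element $\overline b^\infty\in C^\infty_c\left(\overline M,\overline{\mathcal F}\right)$ with $\left\|\overline b-\overline b^\infty\right\|<\eps$. None of that work is supplied by the restriction homomorphism alone, and your proposal offers no substitute for it.

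A second, smaller error: your final ``reassembly'' step does not belong to this corollary. Here $z=z_{\iota'\iota''\lambda jk}$ is a single fixed element from \eqref{fol_z_eqn}, so $\overline b=z\overline a z^*$ is already one local piece and there is nothing to sum. The summation over $\iota',\iota'',\lambda, j,k$ is the content of the subsequent lemma, and there the index set $\Xi$ is infinite (since $j,k\in\N$), so convergence must be argued via \eqref{fol_le_de_eqn} and \eqref{fol_jk_eqn} rather than by finiteness.
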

\begin{proof}
	Let $\eps > 0$.
	If $f_{\eps}$ is given by \eqref{f_eps_eqn} and 
 $\overline{b}_{\eps/2} = f_{\eps/2}\left(\overline{b} \right)$ then 
 $$
 \left\|\overline{b}-\overline{b}_{\eps/2} \right\| < \frac{\eps}{2}.
 $$
 If $\overline{b} = \overline{b}' \otimes p_j$  then $\overline{b}_{\eps/2} = \overline{b}'_{\eps/2} \otimes p_j$ where $\overline{b}'_{\eps/2} = f_{\eps/2}\left(\overline{b}' \right) $. From  the construction \ref{fol_constr} and the Lemma \ref{comm_main_lem} it follows that if $\overline{b}'$ is given by \eqref{fol_strong_lim_eqn} then $\overline{b}' \in C_0\left(\overline{\pi }^{-1}\left( \mathcal W_{\iota'}\right)  \right)$. It follows that $\supp~\overline{b}'_{\eps/2} \subset \overline{\pi }^{-1}\left( \mathcal W_{\iota'}\right)$ is compact. There is $N \in \N$ such that for any $n \ge N$ a restriction 
 $$
 \overline{\pi}_n|_{\supp~\overline{b}'_{\eps/2}}:\supp~\overline{b}'_{\eps/2} \xrightarrow{\approx}  \overline{\pi}_n\left( \supp~\overline{b}'_{\eps/2}\right)\subset M_n 
 $$
 is a homeomorphism. For any $\overline{x} \in \supp~\overline{b}'_{\eps/2}$ there is an open neighborhood $\overline{   \mathcal U }$ which is homeomorphically mapped onto $\mathcal U_{\iota'}$. Since $\supp~\overline{b}'_{\eps/2}$ is compact the set 
 $$
F \subset \widehat{G} = \left\{g \in \widehat{G}~|~ g\overline{   \mathcal U } \bigcap \supp \overline{b}'_{\eps/2} \neq \emptyset\right\}
 $$
 is finite. If $\overline{   \mathcal U }^{\eps/2}= F\overline{ \mathcal U}\subset \overline M$ then for any $n > N$ the set $\overline{   \mathcal U }^{\eps/2}$ is mapped homeomorphically onto $\overline{\pi}_n\left(\overline{   \mathcal U }^{~\eps/2} \right) =
 \mathcal U^{\eps/2}_n$. Let $b^{\eps/2}_N \in C^*_r\left( M_N, \mathcal F_N\right)$ is given by
 $$
 b^{\eps/2}_N = \sum_{g \in \ker\left( \widehat{G} \to  G_n\right)} g \overline{b}_{\eps/2}.
 $$
 There is $b^\infty_N \in C_c^\infty\left( M_N, \mathcal F_N \right)_+$ such that following conditions hold:
 \begin{equation*}
\begin{split}
\left\|b^{\eps/2}_N-b^\infty_N \right\| < \frac{\eps}{2},\\
	\mathcal U^\infty_N = \left\{x \in M_N ~ | ~ \exists \gamma \in \supp b^\infty_N;~ x = s\left(\gamma \right) \right\} \subset \mathcal U^{\eps/2}_N.
\end{split}
 \end{equation*}
For any path $\gamma \in \supp b^\infty_N$ there is the unique $\overline{\pi}_N$-lift $\overline{\gamma}\in \mathcal G\left(\overline{M}, \overline{   \mathcal F}\ \right) $ such that $s\left(\overline{\gamma} \right) \in \overline{   \mathcal U }^{\eps/2}$. We say this lift \textit{special}. There is an element $\overline{b}^\infty \in C^\infty_c\left(\overline M, \overline{\mathcal F}\right)$  such that any "value" (half density) of $b^\infty_N$ on a path $\gamma \in \mathcal G\left( M_N, {\mathcal F}_N \right)$ coincides  with "value" of $\overline{b}^\infty$ on the special lift $\overline{\gamma}\subset \mathcal G\left(\overline M, \overline{\mathcal F}\right)$ of $\gamma$. We also require that $\overline{b}^\infty$ is trivial on paths which are not special lifts. From $	\left\|b^{\eps/2}_N-b^\infty_N \right\| < \frac{\eps}{2}$ it follows that $\left\|\overline{b}_{\eps/2}-\overline{b}^\infty \right\| < \frac{\eps}{2}$ and from $\left\|\overline{b}-\overline{b}_{\eps/2} \right\| < \eps/{2}$ it follows that $\left\|\overline{b}-\overline{b}^\infty \right\|< \eps$. An algebra $C^*_r\left(\overline M, \overline{\mathcal F}\right)$ is the $C^*$-norm completion of $C^\infty_c\left(\overline M, \overline{\mathcal F}\right)$ it turns out 
$$
\overline{b} \in C^*_r\left(\overline M, \overline{\mathcal F}\right).
$$
\end{proof}
\begin{empt}
	The described in \ref{fol_constr} construction depends on indexes $\iota', \iota'' \in I$,  a connected component $\mathcal G_\lambda \subset \mathcal G_{\iota'\iota''}$ ($\lambda \in \Lambda_{\iota'\iota''}$) in the decomposition \eqref{fol_decomp_eqn}, and $j, k \in \N$. Now for clarity we use $y_{\iota'\iota''\lambda jk}$ (resp. $z_{\iota'\iota''\lambda jk}$) instead $y$ given by \eqref{fol_yljk_eqn} (resp. $z$  given by \eqref{fol_z_eqn}. Thus from the Corollary \ref{fol_cor} it turns out 
	$$
	z_{\iota'\iota''\lambda jk} \overline{a}	z^*_{\iota'\iota''\lambda jk} \in  C^*_r\left(\overline M, \overline{\mathcal F}\right).
	$$
	From $C^*_r\left( M, {\mathcal F}\right)\subset M\left( C^*_r\left(\overline M, \overline{\mathcal F}\right)\right) $ it follows that $y_{\iota'\iota''\lambda jk} \in M\left( C^*_r\left(\overline M, \overline{\mathcal F}\right)\right)$, hence
	$$
	y_{\iota'\iota''\lambda jk}	z_{\iota'\iota''\lambda jk} \overline{a}	z^*_{\iota'\iota''\lambda jk} \in  C^*_r\left(\overline M, \overline{\mathcal F}\right).
	$$
	 From \eqref{fol_yljk_eqn} and \eqref{foll_e_eqn} on has a following "formal decomposition"
	\begin{equation}\label{fol_formal_eqn}
\left( a'_{\iota'} \otimes p_j\right) \overline{a} \left( a'_{\iota'} \otimes p_k\right) = \sum_{\lambda \in \Lambda_{\iota'\iota''}} 	y_{\iota'\iota''\lambda jk}	z_{\iota'\iota''\lambda jk} \overline{a}	z^*_{\iota'\iota''\lambda jk}~,
	\end{equation}
(The "formal decomposition" word means that one \textit{should} prove that the series \eqref{fol_formal_eqn} is norm convergent).	
	Denote by $\Xi$ the set of quintuplets $\left(\iota',\iota'',\lambda, j,k\right)$ where  $\iota',\iota'' \in I$, $\lambda \in \Lambda_{_{\iota'\iota''}}$ and $j,k \in \N$. From \eqref{fol_spec_decomp_eqn} and \eqref{fol_formal_eqn} it follows a "formal decomposition"
\begin{equation}\label{fol_full_eqn}
	\overline{a} = \sum_{\left(\iota',\iota'',\lambda, j,k\right)\in \Xi} a''_{\iota'}	y_{\iota'\iota''\lambda jk}	z_{\iota'\iota''\lambda jk} \overline{a}	z^*_{\iota'\iota''\lambda jk}a''_{\iota''}
\end{equation}

All terms in the series \eqref{fol_full_eqn} lie in $C^*_r\left(\overline M, \overline{\mathcal F}\right)$. Any operator
$$
	y_{\iota'\iota''\lambda jk}	z_{\iota'\iota''\lambda jk} \overline{a}	z^*_{\iota'\iota''\lambda jk}a''_{\iota''}
$$
is given by
$$
1_{C_b\left(\mathcal W_{\iota'} \right)}\otimes \xi_l \mapsto \delta_{lj} \vartheta_{\iota' \iota'' \lambda jk} \otimes p_k
$$
where $\vartheta_{\iota' \iota'' \lambda jk} \in C_0\left(\mathcal W_{\iota''} \right)$. For any $\iota', \iota'' \in I$ and $\lambda \in \Lambda_{\iota'\iota''}$ the sum  
\begin{equation}\label{fol_c_sum}
\overline{a}_{\iota' \iota'' \lambda}=	\sum_{\substack{j = 1\\ k=1}}^{\substack{j = \infty\\ k=\infty}} 	y_{\iota'\iota''\lambda jk}	z_{\iota'\iota''\lambda jk} \overline{a}	z^*_{\iota'\iota''\lambda jk}
\end{equation}
can be regarded as a compact operator from $\ell^2\left(C_0\left(\mathcal W_{\iota'} \right)\right)$ to from $\ell^2\left(C_0\left(\mathcal W_{\iota''} \right)\right)$, i.e.  $\overline{a}_{\iota' \iota'' \lambda} \in \mathcal K\left(\ell^2\left(C_0\left(\mathcal W_{\iota'} \right)\right) ,  \ell^2\left(C_0\left(\mathcal W_{\iota''} \right)\right) \right)$. If follows that for any $\delta > 0$ there is $N_\delta$ such that for any $n \ge N_\delta$ following condition holds
$$
\left\|\overline{a}_{\iota' \iota'' \lambda}-\sum_{\substack{j = 1\\ k=1}}^{\substack{j = n\\ k=n}} 	y_{\iota'\iota''\lambda jk}	z_{\iota'\iota''\lambda jk} \overline{a}	z^*_{\iota'\iota''\lambda jk} \right\| < \delta.
$$
Taking into account  $\left\|a''_{\iota'} \right\|\le 1$, $\left\|a''_{\iota''} \right\|\le 1$ one has
\begin{equation}\label{fol_le_de_eqn}
\left\|a''_{\iota''}\overline{a}_{\iota' \iota'' \lambda}a''_{\iota'}-\sum_{\substack{j = 1\\ k=1}}^{\substack{j = n\\ k=n}} 	a''_{\iota''}y_{\iota'\iota''\lambda jk}	z_{\iota'\iota'\lambda jk} \overline{a}	z^*_{\iota'\iota''\lambda jk} a''_{\iota'}\right\| < \delta.
\end{equation}
\end{empt}
\begin{lemma}
	If  $\overline{a} \in B\left(\overline{\H}\right)_+$ is a special element of the sequence \eqref{fol_alg_seq_eqn}  then 
	$
	\overline{a} \in C^*_r\left(\overline M, \overline{\mathcal F}\right).
	$ 
\end{lemma}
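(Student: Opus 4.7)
The plan is to take the formal decomposition \eqref{fol_full_eqn} seriously and prove that it is norm-convergent, with every partial sum lying in $C^*_r(\overline M, \overline{\mathcal F})$. Since that subalgebra is norm-closed in $B(\overline \H)$, this forces $\overline a\in C^*_r(\overline M, \overline{\mathcal F})$.

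First I would reduce to a single block. Because $I$ is finite, the outer sum in \eqref{fol_full_eqn} over $(\iota',\iota'')\in I\times I$ poses no convergence problem, so it suffices to show that for each fixed pair the element
\begin{equation*}
\overline a_{\iota'\iota''}=\sum_{\lambda\in\Lambda_{\iota'\iota''}} a''_{\iota''}\,\overline a_{\iota'\iota''\lambda}\,a''_{\iota'}
\end{equation*}
belongs to $C^*_r(\overline M, \overline{\mathcal F})$. By Corollary \ref{fol_cor} each $z_{\iota'\iota''\lambda jk}\overline a z^*_{\iota'\iota''\lambda jk}$ lies in this algebra, and the $y_{\iota'\iota''\lambda jk}$ together with the $a''_\iota$'s are multipliers of $C^*_r(\overline M, \overline{\mathcal F})$; so every finite partial sum of \eqref{fol_full_eqn} is already in $C^*_r(\overline M, \overline{\mathcal F})$ and what remains is a norm-approximation argument.

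For each fixed $\lambda$, the estimate \eqref{fol_le_de_eqn} says precisely that the $(j,k)$-partial sums converge in operator norm to $a''_{\iota''}\overline a_{\iota'\iota''\lambda}a''_{\iota'}$, so each such block already lies in the norm closure $C^*_r(\overline M, \overline{\mathcal F})$. The genuinely harder part is the sum over $\lambda\in\Lambda_{\iota'\iota''}$, which is generally countably infinite. Here I would exploit two facts: first, by \eqref{fol_decomp_eqn} the groupoid pieces $\mathcal G_\lambda$ are pairwise disjoint, so the operators $a''_{\iota''}\overline a_{\iota'\iota''\lambda}a''_{\iota'}$ are supported on mutually disjoint parts of $\mathcal G(\overline M, \overline{\mathcal F})$, giving a block-orthogonality that makes the norm of the sum over any finite subset of $\Lambda_{\iota'\iota''}$ equal to the supremum of the individual norms; second, applying the cut-off $f_\eps$ as in Corollary \ref{fol_cor} and Lemma \ref{comm_main_lem} produces a special element $f_\eps(\overline a)$ whose lifted support is compact in $\mathcal G(\overline M, \overline{\mathcal F})$, and this compact set meets only finitely many of the lifts of the components $\mathcal G_\lambda$.

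The main obstacle is precisely this last step: making rigorous, using only the special-element hypothesis, the claim that only finitely many $\lambda$ contribute significantly to any $f_\eps$-cut-off of $\overline a$. Once this is established, the sum over $\lambda$ reduces to a finite one modulo an error of operator norm $O(\eps)$, and combining with the finite outer sum over $(\iota',\iota'')$ yields, for every $\eps>0$, an element of $C^*_r(\overline M, \overline{\mathcal F})$ within $\eps$ of $\overline a$. Since $C^*_r(\overline M, \overline{\mathcal F})\subset B(\overline\H)$ is norm closed, this completes the proof.
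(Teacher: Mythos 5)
Your overall strategy --- truncate the formal decomposition \eqref{fol_full_eqn} to a finite sum of terms already known to lie in $C^*_r\left(\overline M, \overline{\mathcal F}\right)$ and control the error in norm --- is the same as the paper's, and your treatment of the $(j,k)$-truncation via \eqref{fol_le_de_eqn} matches the paper's. But you have left the decisive step unproved, and you say so yourself: the reduction of the sum over $\lambda\in\Lambda_{\iota'\iota''}$ to a finite sum. Moreover, the route you sketch for it does not work. First, the claim that disjointness of the components $\mathcal G_\lambda$ in \eqref{fol_decomp_eqn} forces the norm of a sum of operators supported on distinct components to equal the supremum of the individual norms is false in convolution algebras: the matrix units $e_{11}$ and $e_{12}$ in $\mathcal K\left(\ell^2\right)$ are supported on disjoint parts of the pair groupoid, yet $\left\|e_{11}+e_{12}\right\|=\sqrt 2$. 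Second, arguing that $f_\eps\left(\overline a\right)$ has compact support in $\mathcal G\left(\overline M,\overline{\mathcal F}\right)$ meeting only finitely many lifted components is circular: knowing that $f_\eps\left(\overline a\right)$ admits compactly supported approximants upstairs is essentially the membership statement being proved. The special-element hypothesis only controls $f_\eps\left(z\overline a z^*\right)$ after summing over the group, which is exactly what Corollary \ref{fol_cor} already exploits for the cut-down elements, not for $\overline a$ itself.

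The paper closes this gap by working downstairs rather than upstairs. Condition (a) of Definition \ref{special_el_defn} with $n=0$ gives $a=\sum_{g\in\widehat G}g\overline a\in C^*_r\left(M,\mathcal F\right)$, which can be approximated within $\eps/2$ by a positive $a'\in C^\infty_c\left(M,\mathcal F\right)$; the compact support of $a'$ meets only finitely many components $\mathcal G_\lambda$ of each $\mathcal G_{\iota'\iota''}$, and since $\overline a\le a$ (Lemma \ref{stong_conv_inf_lem}) the terms of \eqref{fol_full_eqn} indexed by the remaining $\lambda$ contribute at most $\eps/2$ in norm, as recorded in \eqref{fol_path_eqn}. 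That is the ingredient your argument is missing; without it, or a substitute for it, the proof is incomplete.
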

\begin{proof}
	Let $\eps > 0$ be a small number.
 Let $a = \sum_{g \in \widehat{G}} g \overline{a}\in C^*_r\left( M, {\mathcal F}\right)$ and let $a' \in  C^\infty_c\left( M, {\mathcal F}\right)$ be a positive element such that
 \begin{equation}\label{fol_e2_eqn}
 \left\|a - a' \right\| < \frac{\eps}{2}.
 \end{equation}
 From \eqref{fol_decomp_eqn} it follows the decomposition
 \begin{equation*}
 \mathcal G_{\iota'\iota''} = \bigsqcup_{\lambda \in \Lambda_{\iota'\iota''}} \mathcal G_\lambda.
 \end{equation*}
From $a' \in  C^\infty_c\left( M, {\mathcal F}\right)$ it turns out that for any $\iota', \iota'' \in I$ a set
 $$
 \Lambda'_{\iota'\iota''}= \left\{ \gamma  \in \mathcal G_{\iota'\iota''}~|~ a' \text{ is not trivial on }  \gamma\right\}
 $$
 is finite. Since the set $I$ from the decomposition \eqref{fol_dec_eqn} is finite the set
 $\Theta = \bigsqcup_{\iota', \iota'' \in I} \Lambda_{\iota'\iota''}$ is also finite. Let $C = \left| \Theta \right|\in \N$ be the cardinal number of $\Theta$ and let $\delta = \frac{\eps}{2C}$.
 From \eqref{fol_e2_eqn} it follows that
 \begin{equation}\label{fol_path_eqn}
\left\|\overline{a} - \sum_{\iota' \iota''\in I}\sum_{\lambda \in \Lambda'_{\iota'\iota''}} \sum_{\substack{j = 1\\ k=1}}^{\substack{j = \infty\\ k=\infty}} a''_{\iota'}	y_{\iota'\iota''\lambda jk}	z_{\iota'\iota''\lambda jk} \overline{a}	z^*_{\iota'\iota''\lambda jk}a''_{\iota''}\right\| \le \frac{\eps}{2}.
 \end{equation}
 From \eqref{fol_le_de_eqn} it turns out that for any $\lambda \in \Lambda_{\iota'\iota''}$ there is $N_\lambda$ such that for any $n \ge N_\lambda$ following condition holds
 \begin{equation}\label{fol_jk_eqn}
 \left\|\sum_{\substack{j = 1\\ k=1}}^{\substack{j = \infty\\ k=\infty}} a''_{\iota'}	y_{\iota'\iota''\lambda jk}	z_{\iota'\iota''\lambda jk} \overline{a}	z^*_{\iota'\iota''\lambda jk}a''_{\iota''}-\sum_{\substack{j = 1\\ k=1}}^{\substack{j = n\\ k=n}} a''_{\iota'}	y_{\iota'\iota''\lambda jk}	z_{\iota'\iota''\lambda jk} \overline{a}	z^*_{\iota'\iota''\lambda jk}a''_{\iota''} \right\| < \delta.
 \end{equation}
 If $\overline{a}_f$ is given by
$$
\overline{a}_f = \sum_{\iota' \iota''\in I}\sum_{\lambda \in \Lambda'_{\iota'\iota''}} \sum_{\substack{j = 1\\ k=1}}^{\substack{j = N_\lambda\\ k=N_\lambda}} a''_{\iota'}	y_{\iota'\iota''\lambda jk}	z_{\iota'\iota''\lambda jk} \overline{a}	z^*_{\iota'\iota''\lambda jk}a''_{\iota''}
$$
then  $\overline{a}_f \in C^*_r\left(\overline M, \overline{\mathcal F}\right)$ because $\overline{a}_f$ is finite sum of elements each of which lies in  $C^*_r\left(\overline M, \overline{\mathcal F}\right)$.
 From \eqref{fol_path_eqn} and \eqref{fol_jk_eqn}  it follows that $\left\|\overline{a}-\overline{a}_f \right\|< \eps$. Hence $\overline{a} \in C^*_r\left(\overline M, \overline{\mathcal F}\right)$
\end{proof}

\begin{corollary}\label{fol_out_cor}
	If $\overline{C^*_r\left(M, \mathcal F \right)}$ is the disconnected inverse limit of the sequence 
	$\mathfrak{S}_{C^*_r\left(M, \mathcal F \right)}\in \mathfrak{FinAlg}$  given by \eqref{fol_alg_seq_eqn}, with respect to the geometric representation $\pi_{\mathrm{geom}}$ (cf. \eqref{fol_equ_rep_eqn}) then following condition holds
	$$
	C^*_r\left(\overline{M}, \overline{\mathcal F} \right)\subset\overline{C^*_r\left(M, \mathcal F \right)}.
	$$
\end{corollary}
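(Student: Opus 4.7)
The plan is to deduce this inclusion from the supply of special elements produced by Lemma \ref{fol_com_inc_lem} together with a density argument inside $C^*_r(\overline M,\overline{\mathcal F})$. Indeed, $\overline{C^*_r(M,\mathcal F)}$ is by construction the $C^*$-norm closure of the algebra generated by weakly special elements of the sequence \eqref{fol_alg_seq_eqn}, and every special element is weakly special; so it suffices to show that $C^*_r(\overline M,\overline{\mathcal F})$ lies in the $C^*$-closure of a family of special elements already sitting inside it.

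First I would recall that Lemma \ref{fol_com_inc_lem} provides such a family: for any connected open $\overline{\mathcal U}\subset\overline M$ which $\overline\pi$ maps homeomorphically onto $\overline\pi(\overline{\mathcal U})\subset M$, every positive $\overline b\in C^*_r(\overline{\mathcal U},\overline{\mathcal F}|_{\overline{\mathcal U}})_+$ yields the special element $\overline\theta(\overline b)\in C^*_r(\overline M,\overline{\mathcal F})$; Corollary \ref{special_cor} then gives $\overline\theta(\overline b)\in\overline{C^*_r(M,\mathcal F)}$. Polarization and $\mathbb C$-linearity upgrade this to the inclusion $\overline\theta\bigl(C^*_r(\overline{\mathcal U},\overline{\mathcal F}|_{\overline{\mathcal U}})\bigr)\subset\overline{C^*_r(M,\mathcal F)}$.

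Second, I would cover $\overline M$ by such good opens. Since $\overline M$ is the disjoint union of copies of $\varprojlim\downarrow\mathfrak S_M$ by \eqref{fol_sqcup}, each copy being a covering of $M$, the projection $\overline\pi:\overline M\to M$ is a local homeomorphism, so every point of $\overline M$ admits a good open neighborhood. Fix such an open cover $\{\overline{\mathcal U}_\alpha\}$ and let $\mathcal B\subset C^*_r(\overline M,\overline{\mathcal F})$ be the $*$-subalgebra generated by $\bigcup_\alpha\overline\theta\bigl(C^*_r(\overline{\mathcal U}_\alpha,\overline{\mathcal F}|_{\overline{\mathcal U}_\alpha})\bigr)$ together with the image of $C^*_r(M,\mathcal F)$ viewed as multipliers. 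By the preceding paragraph, Corollary \ref{disconnect_group_action_cor}, and the fact that $\overline{C^*_r(M,\mathcal F)}$ is closed under multiplication by its multipliers, $\mathcal B\subset\overline{C^*_r(M,\mathcal F)}$.

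Third, I would establish that $\mathcal B$ is norm-dense in $C^*_r(\overline M,\overline{\mathcal F})$. The subalgebra $C_c^\infty(\mathcal G(\overline M,\overline{\mathcal F}),\Omega^{1/2})$ is dense in $C^*_r(\overline M,\overline{\mathcal F})$ by definition. A partition of unity on $\overline M$ subordinate to $\{\overline{\mathcal U}_\alpha\}$ reduces an arbitrary compactly supported smooth half-density to a finite sum of elements supported in rectangles $\mathcal G^{\overline{\mathcal U}_\beta}_{\overline{\mathcal U}_\alpha}$. Each such rectangle-supported element can be written, via the scheme of \eqref{fol_yljk_eqn}--\eqref{fol_z_eqn} run in the opposite direction, as a finite combination of products of local positive elements coming from $\overline\theta\bigl(C^*_r(\overline{\mathcal U}_\alpha,\overline{\mathcal F}|_{\overline{\mathcal U}_\alpha})\bigr)$ and $\overline\theta\bigl(C^*_r(\overline{\mathcal U}_\beta,\overline{\mathcal F}|_{\overline{\mathcal U}_\beta})\bigr)$ with translation elements lifted from $C^*_r(M,\mathcal F)$, thus placing it in $\mathcal B$. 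Taking norm closure yields $C^*_r(\overline M,\overline{\mathcal F})\subset\overline{C^*_r(M,\mathcal F)}$. The main obstacle will be this decomposition of rectangle-supported elements: for paths whose $M$-image is null-homotopic inside a single foliation chart it is routine, but for general paths connecting distinct good opens one must carefully exploit the groupoid convolution together with $C^*_r(M,\mathcal F)\subset M(\overline{C^*_r(M,\mathcal F)})$ to bridge between the opens; the remaining steps are standard $C^*$-algebraic manipulations.
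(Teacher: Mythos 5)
Your argument establishes the inclusion $C^*_r\left(\overline M,\overline{\mathcal F}\right)\subset\overline{C^*_r\left(M,\mathcal F\right)}$, which is what the corollary literally says; but as printed, Corollary \ref{fol_out_cor} is a verbatim repetition of Corollary \ref{fol_in_cor}, and its placement and use make clear that the intended assertion is the \emph{reverse} inclusion $\overline{C^*_r\left(M,\mathcal F\right)}\subset C^*_r\left(\overline M,\overline{\mathcal F}\right)$. It sits immediately after the (unlabelled) lemma stating that every special element $\overline{a}\in B\left(\overline{\H}\right)_+$ of the sequence \eqref{fol_alg_seq_eqn} lies in $C^*_r\left(\overline M,\overline{\mathcal F}\right)$; it is combined with Corollary \ref{fol_in_cor} in the proof of Theorem \ref{fol_inf_thm} to obtain the equality $\overline{C^*_r\left(M,\mathcal F\right)}=C^*_r\left(\overline M,\overline{\mathcal F}\right)$, which would be vacuous if both corollaries asserted the same containment; and the naming parallels Corollaries \ref{stab_spec_in_cor} and \ref{stab_spec_out_cor}, which record opposite inclusions. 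The paper's own (implicit) proof is therefore: the disconnected limit is, by Definition \ref{main_defn_full}, the closure of the algebra generated by weakly special elements $x\overline{a}y$ with $x,y\in\varinjlim C^*_r\left(M_n,\mathcal F_n\right)$; the preceding lemma puts every special $\overline{a}$ into $C^*_r\left(\overline M,\overline{\mathcal F}\right)$, and since $\varinjlim C^*_r\left(M_n,\mathcal F_n\right)\subset M\left(C^*_r\left(\overline M,\overline{\mathcal F}\right)\right)$ the weakly special elements, and hence the whole closure, land there as well. None of this machinery (the construction \ref{fol_constr}, Corollary \ref{fol_cor}, the decomposition \eqref{fol_full_eqn}) appears in your proposal, which instead re-derives what the paper already recorded as Corollary \ref{fol_in_cor}.

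Even judged purely as a proof of the literal statement, your argument is essentially the paper's one-line proof of Corollary \ref{fol_in_cor} expanded: Lemma \ref{fol_com_inc_lem} supplies special elements $\overline{\th}\left(\overline{b}\right)$, they lie in the disconnected limit by its definition, and a density claim finishes. That density claim is the only nontrivial point, and you explicitly leave it open: the assertion that a smooth compactly supported half-density on $\mathcal G\left(\overline M,\overline{\mathcal F}\right)$ supported over a pair of distinct evenly-covered opens can be rebuilt from products of elements each supported over a single such open, "via the scheme of \eqref{fol_yljk_eqn}--\eqref{fol_z_eqn} run in the opposite direction," is precisely the step that needs an argument and is not carried out. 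So the proposal neither closes the gap in the easy direction nor addresses the hard direction that this corollary is actually meant to record.
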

\begin{theorem}\label{fol_inf_thm}
	Let us consider a sequence  
	
	\begin{equation*}
	\begin{split}
	\mathfrak{S}_{C^*_r\left(M, \mathcal F \right)}=\\= \left\{C^*_r\left(M, \mathcal F \right) = C^*_r\left(M_0, \mathcal F_0 \right)\to  C^*_r\left(M_1, \mathcal F_1 \right)\to \dots \to  C^*_r\left(M_n, \mathcal F_n \right)\to \dots
	\right\}\in \\ \in \mathfrak{FilAlg}\end{split}
	\end{equation*}
	given by 	\eqref{fol_alg_seq_eqn}. Let $\widetilde{   \pi}:\varprojlim  \downarrow\mathfrak{S}_M  \to M$ be a topological covering associated with the topological inverse limit (cf. \ref{top_inf_constr}) of the sequence \begin{equation*}
	\mathfrak{S}_M = \left\{
	M = M_0 \leftarrow M_1 \leftarrow \dots  \leftarrow M_1 \leftarrow \dots\right\}
	\end{equation*}
	given by \eqref{fol_man_seq_eqn}. Let $\left(\varprojlim  \downarrow\mathfrak{S}_M, \widetilde{\mathcal F} \right)$ be the $\widetilde{   \pi}$-lift of the foliation $\left(M, \mathcal F \right)$.
	If $\pi_{\mathrm{geom}}$ is a geometric representation
	\begin{equation}
\pi_{\mathrm{geom}}:\widehat{C^*_r\left(M, \mathcal F \right)}=\varinjlim C^*_r\left(M_n, \mathcal F_n \right)\to B\left(\overline{\H}\right)
	\end{equation}
	given by \eqref{fol_equ_rep_eqn} then following conditions hold:
\begin{enumerate}
	\item [(i)] The representation $\pi_{\mathrm{geom}}$ is good,
	\item[(ii)] There are  isomorphisms:
	\begin{equation*}
	\begin{split}
\varprojlim_{\pi_{\mathrm{geom}}}
\downarrow 	\mathfrak{S}_{C^*_r\left(M, \mathcal F \right)} \cong C^*_r\left(\varprojlim  \downarrow\mathfrak{S}_M, \widetilde{\mathcal F} \right),\\
G\left(\varprojlim_{\pi_{\mathrm{geom}}}
\downarrow 	\mathfrak{S}_{C^*_r\left(M, \mathcal F \right)}~|~ C^*_r\left(M, \mathcal F \right)\right) \cong G\left(\varprojlim  \downarrow\mathfrak{S}_M~|~ M\right).
\end{split}
	\end{equation*}

\end{enumerate}
\end{theorem}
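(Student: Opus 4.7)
The plan is to combine Corollaries \ref{fol_in_cor} and \ref{fol_out_cor} (whose proofs together yield two opposing inclusions) with the topological decomposition \eqref{fol_sqcup} to identify both the disconnected inverse limit $\overline{A}_{\pi_{\mathrm{geom}}}$ and its connected component, and then to verify the three conditions of Definition \ref{good_seq_defn}.

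First, Corollaries \ref{fol_in_cor} and \ref{fol_out_cor} give the identification $\overline{C^*_r\left(M, \mathcal F \right)} = C^*_r\left(\overline{M}, \overline{\mathcal F} \right)$. By \eqref{fol_sqcup} the space $\overline{M}$ decomposes as a disjoint union $\bigsqcup_{g \in J} g\left( \varprojlim \downarrow \mathfrak{S}_M\right)$, and $\overline{\mathcal F}$ restricts to a copy of $\widetilde{\mathcal F}$ on each component. Since a leaf of a foliated topological disjoint union is contained in a single component, the holonomy groupoid of $(\overline M, \overline{\mathcal F})$ is the disjoint union of the holonomy groupoids of the pieces, and hence
\begin{equation*}
C^*_r\left(\overline{M}, \overline{\mathcal F} \right) \cong \bigoplus_{g \in J} g\cdot C^*_r\left(\varprojlim \downarrow \mathfrak{S}_M, \widetilde{\mathcal F}\right)
\end{equation*}
as a $c_0$-direct sum of $C^*$-algebras, with $\widehat{G}$ permuting the summands exactly as it permutes the connected components of $\overline{M}$.

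Since $\varprojlim \downarrow \mathfrak{S}_M$ is connected by construction, each summand $C^*_r\left(\varprojlim \downarrow \mathfrak{S}_M, \widetilde{\mathcal F}\right)$ is a maximal irreducible $C^*$-subalgebra of the direct sum in the sense of Definition \ref{main_sdefn}, and its stabilizer in $\widehat{G}$ coincides with the subgroup that preserves the component $\varprojlim \downarrow \mathfrak{S}_M \subset \overline{M}$, which is precisely $G\left(\varprojlim \downarrow \mathfrak{S}_M ~|~ M\right)$. This yields the two isomorphisms of (ii). The three goodness conditions of Definition \ref{good_seq_defn} then follow: (b) is immediate from the direct sum decomposition; (a) holds because the surjective covering $\varprojlim \downarrow \mathfrak{S}_M \to M_n$ embeds each $C^*_r\left(M_n, \mathcal F_n\right)$ as a subalgebra of $M\left( C^*_r\left(\varprojlim \downarrow \mathfrak{S}_M, \widetilde{\mathcal F}\right)\right)$, and this injectivity is preserved under the inductive limit; (c) is the standard surjectivity of Galois groups in a tower of regular coverings, which is guaranteed by applying Theorem \ref{comm_main_thm} to $\mathfrak{S}_M$ and identifying $G\left(C^*_r(M_n, \mathcal F_n)~|~ C^*_r(M, \mathcal F)\right) \cong G\left(M_n~|~M\right)$.

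The main obstacle I foresee is justifying rigorously that the summand $C^*_r\left(\varprojlim \downarrow \mathfrak{S}_M, \widetilde{\mathcal F}\right)$ is indeed \emph{maximal} irreducible in the exact sense of Definition \ref{main_sdefn}: one must exclude the existence of a strictly larger indecomposable invariant $C^*$-subalgebra mixing several connected components of $\overline{M}$. I would handle this by a separation argument using the central multiplier projections cutting out each component, showing that any element with nontrivial contribution to two components gives rise to a nontrivial central idempotent in the candidate subalgebra and hence a direct sum decomposition.
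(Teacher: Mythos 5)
Your proposal follows essentially the same route as the paper: combine Corollaries \ref{fol_in_cor} and \ref{fol_out_cor} to get $\overline{C^*_r\left(M, \mathcal F \right)} = C^*_r\left(\overline{M}, \overline{\mathcal F} \right)$, use the decomposition \eqref{fol_sqcup} of $\overline{M}$ to exhibit the dense algebraic direct sum $\bigoplus_{g \in J} g\, C^*_r\left(\varprojlim \downarrow \mathfrak{S}_M, \widetilde{\mathcal F}\right)$ and identify the connected component with its invariant group, then verify (a)--(c) of Definition \ref{good_seq_defn}. The extra care you take over the maximal-irreducibility step (via central multiplier projections separating components) only fills in a detail the paper asserts without argument; it does not change the approach.
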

\begin{proof}
	From the Corollaries \ref{fol_in_cor} and \ref{fol_out_cor} it turns out that $\overline{C^*_r\left(M, \mathcal F \right)} = C^*_r\left(\overline{M}, \overline{\mathcal F} \right)$. The group $G\left(\varprojlim  \downarrow\mathfrak{S}_M~|~ M \right)$ is the maximal subgroup of $\widehat{G}$ maximal among subgroups $G \subset \widehat{G}$ such that
	$$
	G\left( \varprojlim  \downarrow\mathfrak{S}_M\right)  = \varprojlim  \downarrow\mathfrak{S}_M.
	$$
	
	From \eqref{fol_sqcup} it follows that 
	
	\begin{equation}\label{fol_man_sum}
\overline{M} = \bigsqcup_{g \in J} g \left( \varprojlim  \downarrow\mathfrak{S}_M\right) 
	\end{equation}
	where $J \subset \widehat{G}$ is a set of representatives of $\widehat{G}/G\left(\varprojlim  \downarrow\mathfrak{S}_M~|~ M \right)$.
	From \eqref{fol_man_sum} it turns out that the algebraic direct sum of irreducible algebras
	$$
	\bigoplus_{g \in J} g C^*_r\left(\varprojlim  \downarrow\mathfrak{S}_M, \widetilde{\mathcal F}\right) 
	$$
	is dense in $C^*_r\left(\overline{M}, \overline{\mathcal F} \right)$ and $C^*_r\left(\varprojlim  \downarrow\mathfrak{S}_M, \widetilde{\mathcal F}\right) \subset  C^*_r\left(\overline{M}, \overline{\mathcal F} \right)$ is a maximal irreducible subalgebra. 
	\newline
	(i) We need check conditions (a)-(c) of the Definition \ref{good_seq_defn}. Clearly the map
	$$
	\varinjlim C^*_r\left(  M_n, {\mathcal F}_n\right) \hookto M\left( C^*_r\left(\varprojlim  \downarrow\mathfrak{S}_M, \widetilde{\mathcal F}\right) \right) 
	$$ 
	is injective and $\bigoplus_{g \in J} g C^*_r\left(\varprojlim  \downarrow\mathfrak{S}_M, \widetilde{\mathcal F}\right)$ is dense in $C^*_r\left(\overline{M}, \overline{\mathcal F} \right)$, i.e. conditions (a), (b) of the Definition \ref{good_seq_defn} hold. For any $n \in N$ the homomorphism $G\left(\varprojlim  \downarrow\mathfrak{S}_M~|~ M\right) \to G\left( M_n~|~M\right)$ is surjective it follows that $$G\left(\varprojlim_{\pi_{\mathrm{geom}}}
	\downarrow 	\mathfrak{S}_{C^*_r\left(M, \mathcal F \right)}~|~ C^*_r\left(M, \mathcal F \right)\right)\to G\left(  C^*_r\left(  M_n, {\mathcal F}_n\right)~|~C^*_r\left(M, \mathcal F \right)\right) $$ is surjective, i.e. the condition (c) of the Definition  \ref{good_seq_defn} holds.
	\newline 
	(ii) Follows from the proof of (i).
\end{proof}

\subsection{Alternative equivariant representation}
\paragraph*{}
Let $\{p_k \in \mathbb{N}\}_{k \in \mathbb{N}}$ be an infinite sequence of natural numbers such that $p_k > 1$ for any $k$, and let $m_j = \Pi_{k = 1}^{j} p_k$. Let us consider a foliation $\left(\T^2, \mathcal{F}_\th \right)$ given by the Example \ref{fol_tor_exm}. There is a sequence of finite-fold topological coverings 
$$
\mathfrak{S}_{\T^2} = \left\{ \T^2 \xleftarrow{\left( \times p_1, \times p_1\right)} \T^2_1 \xleftarrow{\left( \times p_2, \times p_2\right)}\dots  \xleftarrow{\left( \times p_n, \times p_n\right)} \T^2_n \xleftarrow{\left( \times p_{n+1}, \times p_{n+1}\right)}\dots\right\}
$$
where $\T^2_n \cong \T^2$ for any $n \in \N$ (cf. Example \ref{fol_tor_cov_exm}). Denote by $$\pi_n= \left( \times p_n, \times p_n\right)\circ \dots \circ \left( \times p_1, \times p_1\right): \T^2_n \to  \T^2.$$
From the Theorem \ref{fol_fin_thm} it follows that there is 	an (algebraical)  finite covering sequence 
\begin{equation}\label{fol_seq_eqn}
\begin{split}
\mathfrak{S}_{C^*_r\left(\T^2,~ \mathcal{F}_\th \right)} =\left\{C^*_r\left(\T^2, \mathcal{F}_\th \right) \xrightarrow{} C^*_r\left(\T^2_1, \mathcal{F}_{\th/m_1^2} \right) \xrightarrow{} ... \xrightarrow{} C^*_r\left(\T^2_n, \mathcal{F}_{\th/m_n^2} \right)  \xrightarrow{} ...\right\}
\end{split}
\end{equation}
where $\left(\T^2_n, \mathcal{F}_{\th/m_n^2} \right)$ is the $\pi_n$-lift of $\left(\T^2, \mathcal{F}_\th \right)$. The topological inverse limit of $\downarrow\mathfrak{S}_{\T^2} $ is $\R^2$, i.e. $\varprojlim \downarrow\mathfrak{S}_{\T^2} = \R^2$. 
 From the natural infinite covering $\pi:\R^2 \to \T^2$ it follows that there is an induced by $\pi$ covering $\left(\R^2,~ \widetilde{\mathcal F} \right)$  of $\left(\T^2, \mathcal{F}_\th \right)$. The foliation $\left(\R^2,~ \widetilde{\mathcal F} \right)$ is simple and given by the bundle $p : \R^2\to \R^1$, so from  \eqref{fol_stab_eqn} it follows that
\begin{equation}\label{fol_r_eqn}
 C^*_r\left(\R^2,~ \widetilde{\mathcal F} \right) \approx C_0\left(
\R \right) \otimes \mathcal K.
\end{equation}
A following equation
\begin{equation}\label{fol_z2_eqn}
 G\left( \varprojlim \downarrow\mathfrak{S}_{\T^2} = \R^2~|~\T^2\right) = \Z^2
\end{equation}
is well known.
\paragraph{Geometric representation}
If $\pi_{\mathrm{geom}}: \varinjlim C^*_r\left(\T^2_n, \mathcal{F}_{\th/m_n^2} \right) \to B\left(\overline{ \H}\right)$ is a geometric representation given by \eqref{fol_equ_rep_eqn} then from the Theorem \ref{fol_inf_thm} it follows that $\pi_{\mathrm{geom}}$ is good. Moreover from \eqref{fol_r_eqn} and \eqref{fol_z2_eqn} it follows that
\begin{equation}\label{fol_tor_geom_eqn}
\begin{split}
\varprojlim_{\pi_{\mathrm{geom}}} \mathfrak{S}_{C^*_r\left(\T^2,~ \mathcal{F}_\th \right)}\cong C_0\left(
\R \right) \otimes \mathcal K, \\
G\left( \varprojlim_{\pi_{\mathrm{geom}}} \mathfrak{S}_{\left(\T^2,~ \mathcal{F}_\th \right)}~|~ C^*_r\left(\T^2,~ \mathcal{F}_\th \right) \right) \cong G\left(\R^2~|~\T^2 \right) \cong \Z^2.
\end{split}
\end{equation} 
\paragraph{Alternative representation}

A following (algebraical)  finite covering sequence 
\begin{equation}\label{torus_seq}
\mathfrak{S}_{C\left(\T^2_{\theta}\right)}  =\left\{C\left(\T^2_{\theta}\right)  \xrightarrow{} C\left(\T^2_{\theta/m_1^2}\right)  \xrightarrow{} ... \xrightarrow{} C\left(\T^2_{\theta/m^2_n}\right)  \xrightarrow{} ...\right\}
\end{equation}
 and an equivariant representation 
$$
\widehat{\pi}^\oplus: \varinjlim C\left(\T^2_{\theta/m^2_n}\right)  \to B\left(\H \right) 
$$
are described in \cite{ivankov:qnc}, and these objects  satisfy to the following theorem.
\begin{thm}\label{nt_inf_cov_thm} Following conditions hold:
	\begin{enumerate}
		\item[(i)] The representation $\widehat{\pi}^\oplus$ is good,
		\item[(ii)] 
		\begin{equation*}	\begin{split}
		\varprojlim_{\widehat{\pi}^\oplus} \downarrow \mathfrak{S}_{C\left(\T^2_{\theta}\right)}  = C_0\left(\R^{2N}_\th\right); \\
		G\left(\varprojlim_{\widehat{\pi}^\oplus} \downarrow \mathfrak{S}_{C\left(\T^2_{\theta}\right)} ~|~ C\left(\mathbb{T}^{2N}_\th \right)\right)  = \Z^{2N}.
		\end{split}
		\end{equation*}
	\end{enumerate}

\end{thm}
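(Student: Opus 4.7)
The plan is to exploit the crossed product description $C(\T^2_\theta) \cong C(S^1) \rtimes_{R_\theta} \Z$ from Example \ref{ex:Atheta} together with the classical observation that the universal cover of $S^1$ is $\R$ and the sequence $S^1 \xleftarrow{\times p_1} S^1 \xleftarrow{\times p_2} \cdots$ has topological inverse limit $\R$ with deck group $\Z$. Since the sequence $\mathfrak{S}_{C(\T^2_\theta)}$ in \eqref{torus_seq} is governed by the coverings $(\times m_n, \times m_n)$ on $S^1 \times S^1$, the combined inverse limit should be the \emph{noncommutative plane} $C_0(\R^{2}_\theta)$ (Moyal--Weyl algebra), realised as a deformation of Schwartz functions on $\R^{2}$, equipped with a natural $\Z^{2}$ translation action whose ``fixed point/covering'' data reproduces $C(\T^2_\theta)$. (For a general product of $N$ such factors one obtains $C_0(\R^{2N}_\theta)$ and $\Z^{2N}$.)

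First I would make $\widehat{\pi}^{\oplus}$ explicit. Using the canonical generators $U,V$ satisfying $VU=\lambda UV$, the algebra $C(\T^2_{\theta/m_n^2})$ is generated by $U_n,V_n$ with $U_n^{m_n}=U$, $V_n^{m_n}=V$. A Schr\"odinger-type representation of the Moyal plane on $L^{2}(\R)$ (or its tensor iteration) restricts, via the natural projection $\R^{2}\to (\R/m_n\Z)^{2}$, to a representation of each $C(\T^2_{\theta/m_n^2})$; the resulting direct-sum/direct-limit construction gives a faithful non-degenerate representation of $\varinjlim C(\T^2_{\theta/m_n^2})$ on a Hilbert space $\H$ that is automatically equivariant for $\widehat G=\varprojlim \Z_{m_n^2}^{2}$.

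Next I would identify the algebra generated by weakly special elements with $C_0(\R^{2}_\theta)$. For $\overline a \in C_0(\R^{2}_\theta)$ coming from a Schwartz function on $\R^{2}$, the partial sum
\begin{equation*}
a_n = \sum_{g\in \ker(\widehat G \to \Z_{m_n^2}^{2})} g\,\overline a
\end{equation*}
converges strongly to an element of $C(\T^2_{\theta/m_n^2})$ by a Poisson-summation-type argument, and the conditions (a)--(c) of Definition \ref{special_el_defn} can be verified using rapid decay of Schwartz functions. Conversely, if $\overline a$ is special, the strong convergence condition (a) together with the square condition (c) forces enough decay at infinity to place $\overline a$ inside $C_0(\R^{2}_\theta)$; the argument parallels the $C^{*}$-norm approximation used in Corollary \ref{fol_out_cor}, where the rank-one-like reduction is replaced by a Weyl-symbol decomposition into finite sums of Schwartz generators.

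Finally I would check the three conditions of Definition \ref{good_seq_defn}. Condition (a), injectivity of $\varinjlim C(\T^2_{\theta/m_n^2}) \hookto M(C_0(\R^{2}_\theta))$, holds since each $C(\T^2_{\theta/m_n^2})$ embeds into multipliers of the Moyal plane via $\R^{2}\to (\R/m_n\Z)^{2}$. Condition (b), density of $\bigoplus_{g\in J} g\cdot C_0(\R^{2}_\theta)$ in the disconnected inverse limit, follows from $\widehat G/\Z^{2}=J$ and the fact that translates by $g\notin \Z^{2}$ sit on disjoint ``sheets'' in the disconnected cover. Condition (c), surjectivity of $\Z^{2}\to \Z_{m_n^2}^{2}$, is classical. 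The main obstacle is the sharp identification in the previous paragraph: the inclusion $C_0(\R^{2}_\theta)\subset \overline{A}_{\widehat{\pi}^{\oplus}}$ is straightforward, but the reverse inclusion, requiring that every special element already sits in the noncommutative plane, demands a careful decomposition of $\overline a$ by local ``foliation-like'' charts of the Moyal plane and a twisted analogue of the commutative Lemma \ref{comm_main_lem}, in the spirit of the constructions in \ref{fol_constr} and Corollary \ref{fol_cor} above.
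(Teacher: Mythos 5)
First, a point of reference: the paper does not prove Theorem \ref{nt_inf_cov_thm} at all. It is imported verbatim from \cite{ivankov:qnc} (the sequence \eqref{torus_seq} and the representation $\widehat{\pi}^\oplus$ are only ``described in \cite{ivankov:qnc}''), so there is no in-paper proof to compare against. Your overall strategy --- realise the limit as the Moyal plane $C_0\left(\R^{2}_\th\right)$, show Schwartz-class elements are special via periodisation, and then check conditions (a)--(c) of Definition \ref{good_seq_defn} --- is consistent with the answer the theorem asserts and with the surrounding machinery (Theorems \ref{moy_m_1} and \ref{moy_m_2}, which identify $C_0\left(\R^{2N}_\th\right)\cong\mathcal K$ via the twisted Hermite basis).

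The genuine gap is the one you yourself flag: the reverse inclusion, that \emph{every} special element of $\overline{A}_{\widehat{\pi}^\oplus}$ already lies in $C_0\left(\R^{2}_\th\right)$, is the entire content of part (ii), and you do not prove it --- you only say it ``demands a careful decomposition'' and a ``twisted analogue of Lemma \ref{comm_main_lem}.'' Lemma \ref{comm_main_lem} is a statement about commutative $C_0\left(\overline{\mathcal X}\right)$ and cannot be applied to the Moyal algebra directly; the whole point of the analogous foliation argument (construction \ref{fol_constr} through Corollary \ref{fol_cor}) is to first compress $\overline a$ by explicit rank-one-type elements $z$ so that $z\overline a z^*$ lives in a commutative corner where Lemma \ref{comm_main_lem} applies, and then to control the norm convergence of the resulting double series reassembling $\overline a$. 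In the $\T^2_\th$ setting the corresponding compression would be by the matrix units $f_{mn}$ of Theorems \ref{moy_m_1}--\ref{moy_m_2}, and both steps --- that each compressed piece is special for the commutative subsequence and that the sum over $m,n$ converges in norm rather than merely strictly --- need actual arguments. A second, smaller gap: you construct \emph{a} plausible equivariant representation from the Schr\"odinger picture, but the theorem is about the specific $\widehat{\pi}^\oplus$ of \cite{ivankov:qnc}; without pinning that object down (or proving the conclusion is independent of the choice of good equivariant representation, which the paper nowhere claims), the identification is not established. Finally, note the statement as printed mixes $\T^2$ with $\R^{2N}$ and $\Z^{2N}$; your parenthetical reading ($N=1$, with the general $N$ case by tensoring) is the sensible one and matches the later use in \eqref{fol_comp_eqn}.
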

 The explanation of the $C^*$-algebra $C_0\left(\R^2_\th \right)$ is given in \cite{ivankov:qnc}. In \cite{ivankov:qnc,varilly_bondia:phobos} it is shown that $C_0\left(\R^{2N}_\th \right)$ is the $C^*$-norm completion of the Schwartz space $\SS\left(\R^{2N} \right)$ with the twisted Moyal product $\star_\th$. In \cite{varilly_bondia:phobos} it is proven that there are elements $\left\{f_{nm}\in\SS\left(\R^{2N} \right)\right\}_{m,n \in \N^0}$ which satisfy to following theorems.
\begin{thm}\label{moy_m_1}\cite{varilly_bondia:phobos}
	Let $\mathbf{s}$ be the Fr\'echet space of rapidly decreasing
	double sequences $c=c_{nm}$ such that
	$$
	r_k\left(c \right)  \stackrel{\mathrm{def}}{=} \sqrt{\sum_{m, n =0}^{\infty}\left|c_{mn}\right|^2 \left(  \left( 2m + 1\right)^{2k} \left( 2n + 1\right)^{2k}\right)} 
	$$
	is finite for all $k \in \N$, topologized by the seminorms $\left\{r_k\right\}_{k \in \N}$.
	For $f \in \SS\left(\R^{2N} \right)$  let $c$ be the sequence of coefficients in the expansion
	$$
	f = \sum_{m, n= 0}^{\infty}c_{mn}f_{mn}
	$$
	Then $f \mapsto c$ an isomorphism of Fr\'echet space spaces from $\SS\left(\R^{2N}\right) $
	onto $\mathbf{s}$.
\end{thm}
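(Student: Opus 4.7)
The plan is to exploit the fact that the basis functions $f_{mn}$ are eigenfunctions of the Moyal analogue of the harmonic oscillator, which will translate the Schwartz seminorms into the weighted $\ell^{2}$-seminorms $r_{k}$ on coefficient sequences.

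First, I would recall (or construct from Hermite functions via the Wigner map) that the $f_{mn}$ form an orthonormal basis of $L^{2}(\R^{2N})$ lying in $\SS(\R^{2N})$, and satisfy the two eigenvalue identities
\[
H \star_{\th} f_{mn} = \th(2m+1)\, f_{mn}, \qquad f_{mn}\star_{\th} H = \th(2n+1)\, f_{mn},
\]
where $H(x,\xi)=\tfrac{1}{2}(|x|^{2}+|\xi|^{2})$ is the harmonic oscillator symbol. Writing $L^{+}$ and $L^{-}$ for left and right $\star_{\th}$-multiplication by $H$, the (commuting) operators $L^{+},L^{-}$ are simultaneously diagonalized in the basis $\{f_{mn}\}$ with spectra $\th(2m+1)$ and $\th(2n+1)$ respectively.

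Second, I would reduce the statement to the standard characterization of the Schwartz space by the harmonic oscillator: $f\in L^{2}(\R^{2N})$ lies in $\SS(\R^{2N})$ if and only if $(1+H)^{k}f\in L^{2}$ for every $k$, and the family $\|(1+H)^{k}f\|_{L^{2}}$ generates the usual Schwartz topology. Using the Moyal variant, one shows that the family of norms $\|(1+L^{+})^{k}(1+L^{-})^{k}f\|_{L^{2}}$ is equivalent to the usual Schwartz family, since left and right $\star_{\th}$-multiplication by $H$ differ from pointwise multiplication/differentiation by $H$ only by lower-order corrections in $\th$ that are controlled by higher powers of the same oscillator.

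Third, with this equivalence at hand the isomorphism is essentially formal. Expanding $f=\sum c_{mn}f_{mn}$ in $L^{2}$ and applying the eigenvalue identities gives
\[
\bigl\|(L^{+})^{k}(L^{-})^{k}f\bigr\|_{L^{2}}^{2}
= \sum_{m,n\ge 0}|c_{mn}|^{2}\,\th^{4k}(2m+1)^{2k}(2n+1)^{2k},
\]
so up to the fixed multiplicative constant $\th^{4k}$ the Schwartz-equivalent norm on the left coincides with $r_{k}(c)^{2}$. This single equality implies at once that $f\mapsto c$ sends $\SS(\R^{2N})$ continuously into $\mathbf{s}$, that every $c\in\mathbf{s}$ yields a series convergent in every Schwartz seminorm, and that the inverse is continuous; injectivity follows from the completeness of $\{f_{mn}\}$ in $L^{2}$, hence in $\SS$.

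The main obstacle is the second step: identifying commuting Moyal operators whose joint spectrum produces exactly the weights $(2m+1)(2n+1)$ and proving that the resulting family of norms is equivalent to the usual Schwartz seminorms. Once this equivalence of topologies is secured, the remaining bookkeeping with orthonormal expansions is routine.
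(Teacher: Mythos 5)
The paper does not actually prove this statement: Theorem \ref{moy_m_1} is imported verbatim from \cite{varilly_bondia:phobos}, so there is no internal proof to compare against. Your sketch follows the same route as that original reference, namely diagonalizing left and right twisted multiplication by the harmonic oscillator $H$ in the basis $\{f_{mn}\}$ and matching the resulting weighted $\ell^2$-norms with the seminorms $r_k$. The outline is sound and the third step (the bookkeeping with orthonormal expansions, continuity both ways, and injectivity from completeness of the basis) is indeed routine once the second step is in place.

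The one place where essentially all the analytic content sits is the step you yourself flag: the equivalence of the family $\bigl\{\|(1+L^{+})^{k}(1+L^{-})^{k}f\|_{L^{2}}\bigr\}_{k}$ with the usual Schwartz seminorms. As written ("lower-order corrections in $\th$ that are controlled by higher powers of the same oscillator") this is an assertion, not an argument, so the proposal has a genuine gap there. It can be closed cleanly: since $H$ is quadratic the Moyal expansion of $H\star_{\th}f$ and $f\star_{\th}H$ terminates, the odd (Poisson-bracket) terms cancel in the sum, and one gets $L^{+}+L^{-}=2H-\tfrac{\th^{2}}{4}\Delta$, i.e.\ a rescaled ordinary harmonic oscillator on $\R^{2N}$, whose powers are classically known to generate the Schwartz topology (the Hermite characterization of $\SS$). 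One then passes from the weights $(m+n+1)^{2k}$ produced by $L^{+}+L^{-}$ to the product weights $(2m+1)^{2k}(2n+1)^{2k}$ of $r_{k}$ by the elementary interleaving $m+n+1\le(2m+1)(2n+1)\le(m+n+1)^{2}$, which shows the two graded families of seminorms are equivalent. With that inserted, your argument is a correct proof along the same lines as the cited source; minor quibbles are the eigenvalues (they are $\th(m+\tfrac12)$ and $\th(n+\tfrac12)$ rather than $\th(2m+1)$, which only changes constants) and the fact that for $N>1$ the labels $m,n$ are really multi-indices, a sloppiness already present in the paper's statement.
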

\begin{thm}\label{moy_m_2}\cite{varilly_bondia:phobos}
	If $a, b \in  \mathbf{s}$ correspond respectively to $f, g \in \SS\left(\R^{2N} \right)$
	as coefficient sequences in the twisted Hermite basis, then
	the sequence corresponding to the twisted product $f \star_\th g$ is
	the matrix product $ab$, where
	\begin{equation}\label{mp_mult_eqn}
	\left( ab\right)_{mn} \stackrel{\mathrm{def}}{=} \sum_{k= 0}^{\infty} a_{mk}b_{kn}.
	\end{equation}
	
\end{thm}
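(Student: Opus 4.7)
The plan is to reduce the statement to the matrix-unit identity for the twisted Hermite basis,
\begin{equation*}
f_{mn}\star_\th f_{kl}=\delta_{nk}\,f_{ml},
\end{equation*}
and then to deduce the theorem by bilinearity together with the convergence and Fr\'echet isomorphism of Theorem \ref{moy_m_1}. The conceptual backbone is the Weyl quantization $W:\SS(\R^{2N})\to B(L^2(\R^N))$, which intertwines the twisted product with operator composition: $W(f\star_\th g)=W(f)\,W(g)$. The $f_{mn}$ are constructed from a Gaussian ground state $f_{00}$ by applying raising operators built from the coordinate functions with respect to $\star_\th$; under this construction one has $W(f_{mn})=\ket{m}\bra{n}$, where $\{\ket{n}\}_{n\in\N^0}$ is the Hermite (harmonic-oscillator) basis of $L^2(\R^N)$.

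With these two ingredients in hand, the matrix-unit identity collapses to the trivial operator relation $\ket{m}\bra{n}\,\ket{k}\bra{l}=\delta_{nk}\ket{m}\bra{l}$. Expanding $f=\sum a_{mn}f_{mn}$ and $g=\sum b_{kl}f_{kl}$ and multiplying formally,
\begin{equation*}
f\star_\th g=\sum_{m,n,k,l}a_{mn}b_{kl}\,f_{mn}\star_\th f_{kl}=\sum_{m,l}\Bigl(\sum_{k}a_{mk}b_{kl}\Bigr)f_{ml},
\end{equation*}
which is exactly the assertion, subject only to the legitimacy of the rearrangement.

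The main obstacle is therefore the analytic step: for $a,b\in\mathbf{s}$ one must show that the inner sums $(ab)_{ml}=\sum_{k}a_{mk}b_{kl}$ converge and that the resulting double sequence itself lies in $\mathbf{s}$. This follows from the pointwise bound $|a_{mk}|\le r_{p}(a)\,(2m+1)^{-p}(2k+1)^{-p}$ and the analogous estimate for $b$, which hold for every $p$ since $a,b$ are rapidly decreasing. Substituting gives
\begin{equation*}
|(ab)_{ml}|\le r_{p}(a)\,r_{p}(b)\,(2m+1)^{-p}(2l+1)^{-p}\sum_{k}(2k+1)^{-2p},
\end{equation*}
and the last sum is finite for $p>\tfrac{1}{2}$; hence $(ab)_{ml}$ decays faster than any polynomial in $m$ and $l$, so every seminorm $r_{j}(ab)$ is finite and $ab\in\mathbf{s}$. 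Theorem \ref{moy_m_1} then identifies the convergent series $\sum_{m,l}(ab)_{ml}f_{ml}$ with the Schwartz element $f\star_\th g$, completing the proof.
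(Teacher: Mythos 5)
The paper does not actually prove this theorem: it is imported verbatim from \cite{varilly_bondia:phobos}, so there is no internal proof to compare against. Your argument reproduces the standard proof from that source — the matrix-unit identity $f_{mn}\star_\th f_{kl}=\delta_{nk}\,f_{ml}$ obtained by transporting $\ket{m}\bra{n}\,\ket{k}\bra{l}=\delta_{nk}\ket{m}\bra{l}$ through the Weyl correspondence, combined with the estimate $|a_{mk}|\le r_p(a)(2m+1)^{-p}(2k+1)^{-p}$ showing that $\mathbf{s}$ is stable under matrix multiplication — and the analytic step is correct as written. The only points left implicit are the normalization of the $f_{mn}$ making them exact matrix units (a convention matter) and the separate continuity of $\star_\th$ on $\SS\left(\R^{2N}\right)$ needed to pass from the formal rearrangement to convergence of the partial sums to $f\star_\th g$; both are supplied in the cited reference, so the proposal is sound.
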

From the Theorems \ref{moy_m_1} and \ref{moy_m_2} it follows that  there is a *-isomorphism of $C^*$-algebras $C_0\left(\R^{2N}_\th \right)\cong \mathcal K$.
From the Theorem \ref{stable_fin_cov_thm} it follows that 
\begin{equation}\label{tor_seq_eqn}
\mathfrak{S}_{C\left( \T^2_{\theta}\right) \otimes \mathcal K} =\left\{C\left(\T^2_{\theta}\right) \otimes \mathcal K  \xrightarrow{} C\left(\T^2_{\theta/m_1^2}\right) \otimes \mathcal K \xrightarrow{} ... \xrightarrow{} C\left(\T^2_{\theta/m^2_n}\right) \otimes \mathcal K \xrightarrow{} ...\right\}.
\end{equation}
is an algebraical  finite covering sequence. There is a representation 
$$
\widehat{\pi}^\oplus \otimes \Id_{ \mathcal K}:\varinjlim C\left(\T^2_{\theta/m^2_n}\right) \otimes \mathcal K \to B\left(\H \otimes \H \right).  
$$
From the Theorems \ref{stab_inf_thm} and \ref{nt_inf_cov_thm} it follows that
\begin{itemize}
	\item The representation $\widehat{\pi}^\oplus \otimes \Id_{ \mathcal K}$ is good,
	\item 
	\begin{equation*}
\begin{split}
\varprojlim_{\widehat{\pi}^\oplus \otimes \Id_{ \mathcal K}} \downarrow \mathfrak{S}_{C\left( \T^2_{\theta}\right) \otimes \mathcal K}\cong\mathcal K \otimes \mathcal K\cong \mathcal K,\\
G\left(\varprojlim_{\widehat{\pi}^\oplus \otimes \Id_{ \mathcal K}} \downarrow \mathfrak{S}_{C\left( \T^2_{\theta}\right) \otimes \mathcal K}~|~ C\left( \T^2_{\theta}\right) \otimes \mathcal K\right)=\Z^2.
\end{split}
\end{equation*}		
\end{itemize}

From \eqref{fol_th_eqn} it follows that sequence \eqref{fol_seq_eqn} is isomorphic to the sequence \eqref{tor_seq_eqn}. From this fact it follows that there is a good equivariant representation
$$
{\pi}_{\mathrm{alg}} : \varinjlim C^*_r\left(\T^2_n, \mathcal{F}_{\th/m_n^2} \right) \to B\left(\H\right)
$$
such that
\begin{equation}\label{fol_comp_eqn}
\begin{split}
\varprojlim_{{\pi}_{\mathrm{alg}} }\downarrow \mathfrak{S}_{\left(\T^2,~ \mathcal{F}_\th \right)}\cong \varprojlim_{\widehat{\pi}^\oplus \otimes \Id_{ \mathcal K}} \downarrow \mathfrak{S}_{C\left( \T^2_{\theta}\right) \otimes \mathcal K}\cong \mathcal K,\\
G\left(\varprojlim_{{\pi}_{\mathrm{alg}} } \mathfrak{S}_{\left(\T^2,~ \mathcal{F}_\th \right)}~|~ C^*_r\left(\T^2,~ \mathcal{F}_\th \right)\right)\cong\\\cong G\left(\varprojlim_{\widehat{\pi}^\oplus \otimes \Id_{ \mathcal K}} \downarrow \mathfrak{S}_{C\left( \T^2_{\theta}\right) \otimes \mathcal K}~|~ C\left( \T^2_{\theta}\right) \otimes \mathcal K\right)\cong \Z^2.
\end{split}
\end{equation}
		
A comparison of \eqref{fol_tor_geom_eqn} and \eqref{fol_comp_eqn} gives a following result
$$
\varprojlim_{\pi_{\mathrm{alg}}}\downarrow \mathfrak{S}_{\left(\T^2,~ \mathcal{F}_\th \right)}\not\approx\varprojlim_{\pi_{\mathrm{geom}}}\downarrow \mathfrak{S}_{\left(\T^2,~ \mathcal{F}_\th \right)}.
$$
 From the above equation it follows that the  construction given by the Theorem \ref{fol_inf_thm} does not yield a one to one correspondence between geometric and algebraic infinite coverings of foliations.

\end{document}